\newcommand{\R}        {\mathbb{R}}
\newcommand{\C}        {\mathbb{C}}
\newcommand{\w} {{\tt w}}
\newcommand{\m} {{\tt m}}
\newcommand{\n} {{\tt n}}
\newcommand{\Lw}      {\mathfrak{L}^{\tt{w}}}
\newcommand{\Cinf}        {\mathfrak{C}^\infty}
\newcommand{\B}{\mathfrak{B}}
\newcommand{\ddt} {\frac{d}{dt}}
\newcommand{\der} {\frac{d}{dt}}
\newcommand{\derb} {\left(\frac{d}{dt}\right)}
\newcommand{\col}{\mbox{\rm col}}
\newcommand{\rank}{\mbox{\rm rank}}
\newtheorem{lemma}{Lemma}
\newtheorem{theorem}{Theorem}
\newtheorem{definition}{Definition}
\newtheorem{example}{Example}
\newtheorem{corollary}{Corollary}
\newtheorem{proposition}{Proposition}
\newtheorem{remark}{Remark}
\begin{document}
\title{Stability of switched  linear differential systems}
\author{J.C.~Mayo-Maldonado, P.~Rapisarda and~P.~Rocha\thanks{J.C. Mayo-Maldonado and P. Rapisarda are with the CSPC group, School of Electronics and Computer Science, University of Southampton, Great Britain,  e-mail: {\tt jcmm1g11,pr3@ecs.soton.ac.uk}, Tel:  +(44)2380593367, Fax: +(44)2380594498.}\thanks{P. Rocha is with the Department of Electrical and Computer Engineering, Faculty of Engineering, University of Oporto, Portugal, e-mail: {\tt mprocha@fe.up.pt}, Tel: +(351)225081844, Fax: +(351)225081443.}}

\maketitle

\begin{abstract}
We study the stability of switched systems where the dynamic modes are described by systems of higher-order linear differential equations not necessarily sharing the same state space. Concatenability of trajectories at the switching instants is specified by {gluing conditions}, i.e. algebraic conditions on the trajectories and their derivatives at the switching instant. We provide sufficient conditions for stability based on LMIs  for systems with general gluing conditions. We also analyse the role of positive-realness in providing sufficient polynomial-algebraic conditions for stability of two-modes switched systems with special gluing conditions. 
\end{abstract}

\begin{IEEEkeywords}
Switched systems; behaviours; LMIs; quadratic differential forms; positive-realness. 
\end{IEEEkeywords}

\section{Introduction}

In established approaches, switched systems consist of a  bank of state-space or descriptor form representations (see \cite{HeMo,liberzon,trennThesis,trenn}) sharing a common global state space, together with a supervisory system  determining  which of the modes is active. 
In many situations, modelling switched systems with state representations sharing a common state  is justified from first principles. For example, when dealing with  switched electrical circuits, it can be necessary to consider the state of the overall circuit in order to model the transitions between the different dynamical regimes. However, in other situations modelling a switched system using a common global state space is not justified by physical considerations. For example, in a \emph{multi-controller} control system consisting of a plant and a bank of controllers which have different orders, the dynamical regimes have different state space dimensions. Such a system can be modelled using a global state space common to the different dynamics; however, there is no compelling reason to do so, since at any given time only one controller is active.  
In \emph{hybrid renewable energy conversion systems} (see e.g. \cite{renew}) several energy sources are connected to power devices in order to transform and deliver energy to a grid. Due to the intermittent nature of renewable energies, the need arises to  connect or disconnect dynamical conversion systems such as wind turbines, photovoltaic/fuel cells, etc., whose mathematical models  have different orders.  A similar situation arises in \emph{distributed power systems} \cite{fredlee}, where different electrical loads are connected or disconnected from a power source. 

Modelling such systems using a global state variable results in a more complex (more variables and more equations) dynamical model than alternative representations. For instance, such a description of a distributed power system would include the state variables of each possible load, even though in general not all loads are connected at the same time and contributing to the dynamics of the overall system. This approach also scores low on modularity, i.e.  the independent development and incremental combination of models.

Another issue with the classical approach to switched systems is that modelling from first principles usually does not yield a
state-space description (for a detailed elaboration of this position  see \cite{BehMod}). A system is the  interconnection of subsystems; to model it one first describes the subsystems and the interconnection laws, possibly hierarchically repeating such procedure until simple representations (e.g. derived from a library or from elementary physical principles) can be used. Such a model typically involves  algebraic relations (e.g. kinematic or equilibrium constraints), and differential equations of first- and  second-order (e.g., constitutive equations of electrical components,  dynamics of masses), or of higher-order (e.g., resulting from the elimination of auxiliary variables). 

These considerations motivate the development of a framework to model and analyse switched systems using higher-order models describing  dynamics with  different complexity. In our approach, each dynamic mode is associated with a \emph{mode behaviour}, the set of trajectories that satisfy the dynamical laws of that mode. A \emph{switching signal} determines when a transition between dynamic modes occurs. To be   \emph{admissible}  for the switched behaviour, a trajectory must satisfy two conditions. Firstly,  it must satisfy the laws of the mode active in  the interval between two consecutive switching instants. Secondly, at the switching instants the trajectory must satisfy certain \emph{gluing conditions},  representing the physical constraints imposed by the switch, e.g. conservation of charge, kinematic constraints, and so forth. The set of all admissible trajectories is the \emph{switched behaviour}, and is the central object of study in our framework.  

Following the preliminary investigations for systems with one variable reported in \cite{mtns12,cdcpaper}, in this paper we propose such a framework for the linear multivariable autonomous case. Each mode behaviour is represented by a set of linear, constant-coefficient higher-order differential equations. The gluing conditions consist of algebraic equations involving the values of a trajectory and its derivatives before and after the switching instant. 
We focus on {closed systems}, i.e. systems without input variables, and we study their Lyapunov stability using quadratic functionals of the system variables and their derivatives. We present new sufficient conditions based on systems of LMIs for the existence of a higher-order quadratic Lyapunov function for arbitrary gluing conditions. Such systems of LMIs can be set up straightforwardly from the equations of the modes and the gluing conditions. We also study the relation of positive-realness with the stability of a class of (``standard'') two-modes switched systems; these conditions are multivariable generalisations of those presented in the scalar case in \cite{mtns12,cdcpaper}. Finally, we introduce the notion of positive-real completion of a given transfer function. 

Following the  behavioural approach for linear systems (see \cite{yellow}), the mode equations and the gluing conditions are represented by one-variable polynomial matrices,  and the Lyapunov functionals by two-variable ones. The calculus of such functionals and representations is a powerful tool conducive to the use of computer algebra techniques for the modelling and analysis of switched systems.

The  approaches closest to ours are those of Geerts and Schumacher (see \cite{GS1,GS2}) on impulsive-smooth systems and polynomial representations; and that of Trenn about  linear differential-algebraic equations (DAE's; see \cite{liberzontrenn09,trennThesis,trenn,trenn13}), and most pertinently his recent publication \cite{TW12} (also worth mentioning is \cite{BM12}, which however is less related to our setting). These authors consider mode dynamics with different state-space dimension, a situation  generally involving impulses in the system trajectories, a relevant issue also for practical reasons (see e.g. \cite{dgarciatrenn10}). In \cite{trennThesis}  a unifying, rigorous distributional framework for switched systems has been given. When the  modes are described by DAE's this approach encompasses also the detection of impulses {directly} from the equations; for higher-order representations as in \cite{TW12}, \emph{impact maps} are used instead to specify {explicitly} the impulsive part of the behaviour. Stability (also in a Lyapunov sense) for impulse-free switched DAE's has been investigated in \cite{liberzontrenn09}.   

In this paper we deal with autonomous (i.e. closed) modes; impulsive effects are  \emph{implicitly} defined by the gluing conditions and the mode dynamics involved in the transition (i.e. do not depend for example on the degree of differentiability of some input variable). Our position is that gluing conditions are a given; we take them at face value. Whether they are physically meaningful or not; whether they imply impulses or not; and whether the latter is an important issue for the particular physical system at hand, are major modelling issues that we assume have been weighed carefully by the modeller (on this issue see also p. 749 of \cite{GS1}).  In certain cases, see Examples \ref{ex:elcirc} and \ref{ex:math} below, our attitude towards gluing conditions seems to  involve less conceptual difficulties than letting the equations to dictate the re-initialisation mechanism at the switching instants. This ``agnostic" position does not absolve us though from the important task, relevant for instance in the case of models assembled from libraries,  of studying how to determine the presence of impulses directly from the equations and associated gluing conditions; this is a pressing research question to be considered elsewhere (on this issue see \cite{trennThesis,trenn}).  

We study stability for higher-order representations also in the presence of impulses. We recognise the validity of the position taken in \cite{trenn} for switched DAE's that when impulses are allowed, the idea that in a stable system small initial states produce  state trajectories vanishing at infinity is awkward. However, we also notice that the impulsive nature of  solutions has not discouraged investigation of stability (also with Lyapunov methods), both in the classical impulsive differential equations framework (see e.g. Ch. 3 of \cite{Samolienko}), and in the state-space approach to switched systems, where impulses are implicit in the reset maps (see e.g. \cite{HeMo}). Other recent approaches are focused on the study of switched systems whose trajectories are everywhere continuous, and thus not contain impulses; e.g. \cite{DissZhao}, where a complete framework for dissipative switched systems is presented (see Sec. II \emph{ibid}.).

The paper is organised as follows: in section \ref{sec:SLDS} we define switched linear differential systems (SLDS),  we give  examples of SLDS, and we discuss the issue of well-posedness. In section \ref{sec:stab} we give sufficient conditions for stability of a SLDS based on the existence of a multiple Lyapunov function (MLF). We also discuss how to compute MLFs using LMIs. In section \ref{sec:standard} we focus on two-modes SLDS, and we investigate the role of positive-realness in establishing the stability of such systems. The notational conventions and some background material on the behavioural approach and quadratic differential forms is gathered in Appendix I, while the proofs are gathered in Appendix II. 

\section{Switched Autonomous Linear Differential Systems}\label{sec:SLDS}

\subsection{Basic definitions}
Recall from App. \ref{sec:behbasics} the definition of $\mathfrak{L}^{\tt w}$ as the set of linear differential behaviours. A switched linear differential system is defined in the following way (see also \cite{mtns12,cdcpaper}).

\begin{definition}\label{def:SLDS}\rm
A \emph{switched linear differential system (SLDS)} $\Sigma$ is a quadruple $\Sigma=\{\mathcal{P},\mathcal{F},\mathcal{S},\mathcal{G}\}$ where
${\mathcal P}= \{1, \dots, N \}\subset \mathbb{N}$, is the set of \emph{indices}; ${\mathcal F} = \left\{ \mathfrak{B}_1, \ldots, \mathfrak{B}_N \right\}$, with $\mathfrak{B}_j\in \mathfrak{L}^{\tt w}$ for $j \in \mathcal P $, is the \emph{bank of behaviours}; ${\mathcal S}=\{ s:\R \rightarrow \mathcal{P} \mid s\mbox{ is piecewise constant and right-continuous} \}$, is the set of admissible \emph{switching signals}; and 
\[
\mathcal{G}=\left\{(G_{k\rightarrow\ell}^-(\xi), G_{k\rightarrow\ell}^+(\xi))\in \R^{\bullet \times \tt w}[\xi]\times \R^{\bullet \times \tt w}[\xi] \mid 1\leq k,\ell\leq N\; , \; k\neq \ell\right\}
\] 
is the set of \emph{gluing conditions}. The set of \emph{switching instants} associated with $s\in \mathcal{S}$ is defined by $\mathbb{T}_s:=\{ t \in \R ~|~ \lim_{\tau \nearrow t} s(\tau) \neq s(t) \} = \{ t_1,t_2,\dots \}$, where $t_i<t_{i+1}$.
\end{definition}

A SLDS induces a switched behaviour, defined as follows. 
\begin{definition}\label{def:SLDB}\rm
Let $\Sigma=\{\mathcal{P},\mathcal{F},\mathcal{S},\mathcal{G}\}$ be a SLDS, and let $s \in \mathcal{S}$. The \emph{$s$-switched linear differential behaviour $\mathfrak{B}^s$} is the set of trajectories $w:\R\rightarrow \R^\w$ that satisfy the following two conditions:
\begin{enumerate}
  \item for all  $ t_i, t_{i+1} \in \mathbb{T}_s$, there exists $\mathfrak{B}_k\in\mathcal{F}$, $ k \in \mathcal{P}$ such that $w\!\mid_{ [t_i, t_{i+1}) } \in \mathfrak{B}_k\!\mid_{[t_i, t_{i+1})}$;   \item $w$ satisfies the gluing conditions $\mathcal{G}$ at the switching instants for each $t_i\in \mathbb{T}_s$, i.e. \\ $(G^+_{s(t_{i-1})\rightarrow s(t_i)}(\frac{d}{dt}))w(t_i^+ ) = (G_{s(t_{i-1})\rightarrow s(t_i)}^-(\frac{d}{dt}))w(t_i^-)$. \end{enumerate}
The \emph{switched linear differential behaviour (SLDB)}  $\mathfrak{B}^\Sigma$ of $\Sigma$ is defined by $\mathfrak{B}^\Sigma:= \bigcup_{s\in \mathcal{S}} \mathfrak{B}^s$.
\end{definition}
We make the standard assumption (see e.g. sect. 1.3.3 of \cite{sunge}) that   the number of switching instants in any finite interval of $\R$ is \emph{finite}. Moreover, in this paper we assume that the behaviours $\B_i$, $i\in\mathcal{P}$ are \emph{autonomous}. Since the trajectories of an autonomous behaviour are infinitely differentiable (see 3.2.16 of \cite{yellow}), the trajectories of a switched behaviour as in Def. \ref{def:SLDB} are smooth in any interval between two consecutive switching times.

We now give three examples of switched behaviours; besides exemplifying the Definitions, they allow us to point out some important features of our approach to switched systems (see also section \ref{sec:stab} for another more realistic example). 
\begin{example}\label{ex:elcirc}\rm 
Consider the  electrical circuit  in Fig. \ref{Fig:electricalcircuit}, where $C=1~F$, $R=1~\Omega$, and  $w_1$ and $w_2$ are  voltages. 
\begin{figure}[htbp]
\includegraphics[scale=.7]{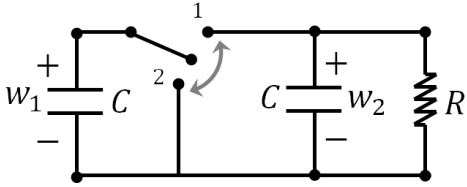}
\caption{An electrical circuit}
\label{Fig:electricalcircuit}
\end{figure}
With the switch in position 1, the dynamical equations are 
\begin{eqnarray}\label{eq:B1}
\frac{d}{dt}w_2+w_2&=&0\nonumber\\
w_1-w_2&=&0\; ;
\end{eqnarray}
 when the switch is in position 2, the dynamical equations are 
\begin{eqnarray}\label{eq:B2ST}
\frac{d}{dt}w_2+w_2&=&0\nonumber\\
w_1&=&0\; . 
\end{eqnarray}
The gluing conditions follow from the principle of conservation of charge (see also \cite{CTV}): for a transition $\B_2 \rightarrow \B_1$ the matrices are
\begin{equation}\label{eq:gc2to1}
G_{2\rightarrow 1}^-:=\begin{bmatrix} 
0&\frac{1}{2}\\0&\frac{1}{2}
\end{bmatrix}\; ,\; G_{2\rightarrow 1}^+:=I_2\; , 
\end{equation}
and for  a transition $\B_1\rightarrow \B_2$ they are 
\begin{equation}\label{eq:gc1to2}
G_{1\rightarrow 2}^-:=\begin{bmatrix} 
0&0\\0&1
\end{bmatrix}\; ,\; G_{1\rightarrow 2}^+:=I_2\; . 
\end{equation}
The switched behaviour consists of all piecewise smooth functions $\mbox{\rm col}(w_1,w_2)$ that satisfy (\ref{eq:B1}) or (\ref{eq:B2ST}) depending on the position of the switch, and that at the switching instant satisfy either $w_1(0^+)=\frac{1}{2}w_2(0^-)$, $w_2(0^+)=\frac{1}{2} w_2(0^-)$ (for a transition $\B_2 \rightarrow \B_1$) or $w_1(0^+)=0$, $w_2(0^+)=w_2(0^-)$ (for a transition $\B_1 \rightarrow \B_2$). These gluing conditions imply that in any non-trivial case the value of $w_1$ jumps at the switching instant. \qed
\end{example}

\begin{example}\label{ex:conccond}\rm 
Depending on the value of a switching signal a plant $\Sigma_P$ with two external variables, described by the differential equation $\der w_1-w_1-w_2=0$, is   connected with one of two possible controllers $\Sigma_{C_1}$ and $\Sigma_{C_2}$, described respectively by $-3\der w_1-w_1-\der w_2=0$ and $-2 w_1-w_2=0$. Depending on which controller is active, the resulting closed-loop behaviours are  
\[
\B_1:=\ker \begin{bmatrix} \der-1&-1\\
-3\der-1&-\der\end{bmatrix}\; \mbox{ and } \B_2:=\ker \begin{bmatrix} \der-1&-1\\
-2&-1 \end{bmatrix}\; .
\] 
Note that $\B_1$ and $\B_2$ have different McMillan degree (2 and 1, respectively).  We define the gluing conditions for the SLDS associated with $\B_1$ and $\B_2$ by 
\[
G_{2\rightarrow 1}^-(\xi):=\begin{bmatrix} 0&1\\ 0&-2
 \end{bmatrix}\; ,\; G_{2\rightarrow 1}^+(\xi):=\begin{bmatrix} 0&1\\ 1&0 \end{bmatrix}
 \] 
 and by 
 \[
 G_{1\rightarrow 2}^-(\xi):=\begin{bmatrix}1&0 \end{bmatrix}\; , G_{1\rightarrow 2}^+(\xi):=\begin{bmatrix}1&0 \end{bmatrix}\; . 
 \]
 The rationale underlying our choice of gluing conditions is that any trajectory of $\B_1$  is uniquely specified by the instantaneous values of $\mbox{\rm col}(w_1,w_2)$, while a trajectory of $\B_2$ is uniquely specified by the instantaneous value of $w_1$. Moreover, when switching from the dynamics of $\B_1$ to those of $\B_2$, we require that the values of $w_1$ before and after the switching instant coincide. In a switch from $\B_2$ to $\B_1$,  since the second differential equation describing $\B_2$ yields $w_2=-2 w_1$ before the switch, we impose that $w_2(t_k^+)=w_2(t_k^-)=-2w_1(t_k^-)$.  This makes the switched trajectory as smooth as possible, taking into account the restrictions imposed by each individual behaviour $\B_1$ and $\B_2$. \qed
\end{example}

\begin{example}\label{ex:math}\rm
Consider two behaviours respectively described by the equations 
\begin{eqnarray}\label{eq:B1p}
\frac{d}{dt}w_2+w_2&=&0\nonumber\\
w_1-w_2&=&0\; ,
\end{eqnarray}
and  
\begin{eqnarray}\label{eq:B2p}
\frac{d}{dt}w_1+\frac{d}{dt}w_2+w_1+w_2&=&0\nonumber\\
w_1&=&0\; . 
\end{eqnarray}
The gluing conditions for a transition $\B_2 \rightarrow \B_1$ are associated with the matrices 
\begin{equation}\label{eq:gc2to1p}
G_{2\rightarrow 1}^-:=\begin{bmatrix} 
0&1\\0&1
\end{bmatrix}\; ,\; G_{2\rightarrow 1}^+:=I_2\; , 
\end{equation}
and for  a transition $\B_1\rightarrow \B_2$ they are defined by 
\begin{equation}\label{eq:gc1to2p}
G_{1\rightarrow 2}^-:=\begin{bmatrix} 
0&0\\\frac{1}{2}&\frac{1}{2}
\end{bmatrix}\; ,\; G_{1\rightarrow 2}^+:=I_2\; ; 
\end{equation}
i.e. in a switch  $\B_1\rightarrow\B_2$ the new value of $w_2$ is the average of the old values of $w_1$ and $w_2$. \qed
\end{example}

Examples \ref{ex:elcirc} and \ref{ex:math} offer the opportunity of making two  important remarks. 

\begin{remark}\rm 
Note that (\ref{eq:B2ST}) and  (\ref{eq:B2p}) describe the same set of solutions; indeed, the description (\ref{eq:B2ST}) can be obtained from (\ref{eq:B2p}) by \emph{unimodular} operations, which in the case of autonomous systems do not alter the solution set (see Th. 2.5.4 and Th. 3.2.16 of \cite{yellow})\footnote{Some equivalence results for the $\Cinf$-case are not valid for non-autonomous systems and $\mathcal{L}_1^{\rm \tiny loc}$ trajectories; see for example \cite{propelim}. On equivalence of polynomial representations of switched systems, see sect. 3 of \cite{GS2}.}. 
Considering that (\ref{eq:B1}) and  (\ref{eq:B1p}) are the same equation, the dynamic modes are the same for both switched systems; thus the  two switched behaviours are \emph{different because the gluing conditions are}. We will prove later in this paper that these two switched systems also have different stability properties- that of Ex. \ref{ex:elcirc} is stable under arbitrary switching signals, while the other is not. Stability arises from the \emph{interplay} of mode dynamics and gluing conditions. \qed
\end{remark}

\begin{remark}\rm
Gluing conditions should be defined on the basis of  the physics of the system under study. Those for the system of Example \ref{ex:elcirc} are meaningful for the particular physical system at hand. However, for \emph{another}  physical system whose modes happen to be  described also by (\ref{eq:B1p})-(\ref{eq:B2p}), the  conditions (\ref{eq:gc2to1p})-(\ref{eq:gc1to2p}) may also be physically plausible. In each case we assume that well-grounded physical considerations have been motivating the choice. 
\qed
\end{remark}

\subsection{Well-posedness of gluing conditions}\label{sec:wellpos}

In principle Def.s \ref{def:SLDS} and \ref{def:SLDB} do not restrict the gluing conditions; however, since we assume that the modes  are autonomous, i.e. no external influences are applied to the system  between consecutive switching times, it is reasonable to require more. Namely, no  different admissible trajectories should exist with the same past (i.e. same mode transitions at the same switching instants, and same restrictions from $t=-\infty$ up until a given  switching instant $\overline{t}$). If such trajectories  exist, then at $\overline{t}$ the past  ``splits" in different futures; however, since no external inputs  could trigger such a change,  the past of a trajectory should uniquely define its future.  These considerations lead to the concept of \emph{well-posed} gluing conditions, which we now introduce. 

In order to do so, we first fix kernel representations $\B_k=:\ker R_k\derb$, with $R_k\in\R^{\tt w \times w}[\xi]$ nonsingular, $k=1,...,N$ for the modes. 
We also define $n_k:=\mbox{deg}(\mbox{det}(R_k))$, $k=1,...,N$,
and we fix  minimal state maps (see App. \ref{app:backmater:statemaps}) $X_k\in\R^{n_k\times \w}[\xi]$, $k=1,...,N$. Every polynomial differential operator $G\derb$ on $\B_k$ has a unique $R_k$-\emph{canonical representative} $G^\prime\derb$, denoted by $G^\prime=G \mod R_k$, such that  $G^\prime\derb w=G\derb w$ for all $w\in\B_k$ (see App. \ref{sec:behbasics}). Now let  $\left(G_{k\rightarrow\ell}^-,G_{k\rightarrow\ell}^+ \right)\in\mathcal{G}$; then   $\left(G_{k\rightarrow\ell}^- \mod R_k,G_{k\rightarrow\ell}^+ \mod R_\ell\right)$ are equivalent to $\left(G_{k\rightarrow\ell}^-,G_{k\rightarrow\ell}^+ \right)$, in the sense that the algebraic conditions imposed by the one pair are satisfied iff they are satisfied by the other. Moreover, since $G_{k\rightarrow\ell}^- \mod R_k$ and $G_{k\rightarrow\ell}^+ \mod R_\ell$ are $R_k$-, respectively $R_\ell$-canonical, there exist constant matrices $F_{k\rightarrow\ell}^-$ and $F_{k\rightarrow\ell}^+$ of suitable dimensions such that  $G_{k\rightarrow\ell}^-(\xi)\;\mbox{mod}\;R_k=F_{k\rightarrow\ell}^-X_k(\xi)$ and $G_{k\rightarrow\ell}^+(\xi)\;\mbox{mod}\;R_\ell=F_{k\rightarrow\ell}^+X_\ell(\xi)$. We  call 
\[
\mathcal{G}^\prime:=\{(F_{k\rightarrow\ell}^-X_k(\xi),F_{k\rightarrow\ell}^+X_\ell(\xi))\mid 1\leq k,\ell\leq N, k\neq \ell \}
\]
 the \emph{normal form} of $\mathcal{G}$.   
\begin{definition}\label{def:WPGC}\rm
Let $\Sigma$ be a SLDS with $\B_i=\ker R_i\derb$ autonomous, $i=1,\ldots,N$. The normal form gluing conditions $\mathcal{G}^\prime:=\{(F_{k\rightarrow\ell}^-X_k(\xi),F_{k\rightarrow\ell}^+X_\ell(\xi))\}_{k,\ell=1,\ldots,N, k\neq \ell}$ are \emph{well-posed} if for all $k,\ell=1,\ldots,N$, $k\neq \ell$, and for all $v_k\in\R^{n_k}$ there exists at most one $v_\ell\in\R^{n_\ell}$ such that $F_{k\rightarrow\ell}^-v_k=F_{k\rightarrow\ell}^+v_\ell$. 
\end{definition}
Thus if a transition occurs between $\B_k$ and $\B_\ell$  at  $t_j$, and if an admissible trajectory ends at a ``final state" $v_k=X_k\left(\frac{d}{dt} \right)w(t_j^-)$, then there exists at most one ``initial state" for $\B_\ell$, defined by $X_\ell\left(\frac{d}{dt} \right)w(t_j^+):=v_\ell$, compatible with the gluing conditions. 

\begin{example}\rm
Consider the gluing conditions of Example \ref{ex:conccond}.  A minimal state map for $\B_1$ is $X_1(\xi):=I_2$, and a minimal state map for $\B_2$ is $X_2(\xi)=\begin{bmatrix}1&0 \end{bmatrix}$.  It follows that $G_{2\rightarrow 1}^+\mod R_1(\xi)=G_{2\rightarrow 1}^+(\xi)=F_{2\rightarrow 1}^+ X_1(\xi):=\begin{bmatrix} 0&1\\ 1&0\end{bmatrix} I_2$. Moreover, $G_{1\rightarrow 2}^+\mod R_2(\xi)=G_{1\rightarrow 2}^+(\xi)=F_{1\rightarrow 2}^+ X_2(\xi):=1 \begin{bmatrix} 1&0\end{bmatrix}$. These gluing conditions are well-posed. It can be verified in a similar way that the gluing conditions of Examples \ref{ex:elcirc} and \ref{ex:math} are also well-posed. \qed
\end{example}

%

%

\begin{remark}\label{rem:existence}\rm 
Well-posedness only concerns \emph{uniqueness}, not  \emph{existence} of an admissible ``initial condition" $v_\ell$ in $\B_\ell$ for a given ``final condition" $v_k$ in $\B_k$. 
It may happen that the gluing conditions cannot be satisfied by nonzero trajectories; they may not be ``consistent" with the mode dynamics. For example, consider a SLDS with modes  (\ref{eq:B1p}) and (\ref{eq:B2p}),  and  (well-posed) gluing conditions $G_{2\rightarrow 1}^-:=I_2,G_{2\rightarrow 1}^+:=I_2$, $G_{1\rightarrow 2}^-:=I_2,G_{1\rightarrow 2}^+:=I_2$. $w\in\B_1$ iff $w(t)=\alpha~ \mbox{\rm col}(e^{-t},e^{-t})$, $\alpha\in\R$; and  $w\in\B_2$ iff $w(t)=\alpha~ \mbox{\rm col}(e^{-t},0)$, $\alpha\in\R$. Since constant switching signals $\sigma_1=1$ and $\sigma_2=2$ are admissible, it follows that $\B^\Sigma\supset \B_i$, $i=1,2$. However, no genuine switched trajectory exists besides the zero one, since the gluing conditions cannot be satisfied by nonzero trajectories of either of the behaviours.

The problem whether a given ``initial condition"  is consistent or not with the mode dynamics was solved most satisfactorily in the switched DAE's framework of Trenn (see Ch. 4 of \cite{trennThesis}); algorithms are stated that from the matrices describing a mode compute ``consistency projectors" whose image is  the subspace of consistent initial values. We briefly discuss the issue of consistent gluing conditions in our framework. 

Denote the roots of $\det~R_k(\xi)$  by $\lambda_{k,i}$, $i=1,\ldots n_k$. We  assume for ease of exposition that the algebraic multiplicity of $\lambda_{k,i}$ equals  the dimension of $\ker R_k(\lambda_{k,i})$. It follows from sect. 3.2.2 of \cite{yellow} that $w\in\B_k$ iff there exist $\alpha_{k,i}\in\C$, $i=1,\ldots,n_i$  such that $w=\sum_{i=1}^{n_k} \alpha_{k,i} w_{k,i} \exp_{\lambda_i t}$, where $w_{k,i}\in\C^{\w}$ is such that $R_k(\lambda_{k,i})w_{k,i}=0$, and the $w_{k,i}$ associated with equal $\lambda_{k,i}$ are linearly independent. Note that the $\alpha_{k,i}$ associated to conjugate $\lambda_{k,i}$ are conjugate.


Now define $V_i:=\begin{bmatrix}X_i(\lambda_{i,1})w_{i,1}& \ldots& X_i(\lambda_{i,n_i})w_{i,n_i}\end{bmatrix}\in\C^{n_i\times n_i}$ and $\alpha_i:=\begin{bmatrix} \alpha_{i,1}&\ldots& \alpha_{i,n_i}\end{bmatrix}^\top$,  $i=k,\ell$;  and consider a switch from $\B_k$ to $\B_\ell$ at $t=0$. The gluing conditions stipulate that $G_{k\rightarrow\ell}^-(w)(0^-)=F_{k\rightarrow\ell}^-V_k\alpha_k=F_{k\rightarrow\ell}^+ V_\ell\alpha_\ell=G_{k\rightarrow\ell}^+(w)(0^+)$. Nonzero  $\alpha_{i}$, $i=k,\ell$  exist satisfying this equality if and only if  $\mbox{im}~F_{k\rightarrow\ell}^-V_k \subseteq
\mbox{im}~F_{k\rightarrow\ell}^+V_\ell$. Standard arguments in ordinary differential equations show that $V_k$ and $V_\ell$ are nonsingular; consequently the consistency condition can be equivalently stated as $\mbox{im}~F_{k\rightarrow\ell}^- \subseteq 
\mbox{im}~F_{k\rightarrow\ell}^+$. \qed
\end{remark}

Well-posedness implies that for all $k,\ell=1,\ldots,N$, $k\neq \ell$, $F_{k\rightarrow\ell}^+$ is full column rank, and consequently there exists a \emph{re-initialisation map} $L_{k\rightarrow \ell}:  \R^{n_k} \rightarrow \R^{n_\ell}$ defined by  $L_{k\rightarrow \ell}:={F}_{k\rightarrow\ell}^{+\ast} F_{k\rightarrow\ell}^-$, where ${F}_{k\rightarrow\ell}^{+\ast}$ is a left inverse of $F_{k\rightarrow\ell}^{+}$. For all $t_j\in\mathbb{T}_s$ and all admissible  $w\in\B^\Sigma$ it holds that
\begin{eqnarray*}
&&\left[s(t_{j-1})=k, s(t_j)=\ell \right]~ \mbox{\rm and }
\left[G_{k\rightarrow \ell}^+\left(\frac{d}{dt} \right)w(t_j^+)=G_{k\rightarrow \ell}^-\left(\frac{d}{dt} \right)w(t_j^-)\right] \\
&&\Longrightarrow \left[X_\ell\left(\frac{d}{dt} \right)w(t_j^+)=L_{k\rightarrow \ell}\left(X_k\left(\frac{d}{dt} \right)w(t_j^-)\right)\right]\; .
\end{eqnarray*}
Note that the re-initialisation map is not uniquely determined unless $F_{k\rightarrow\ell}^+$ is nonsingular. In the rest of the paper, we  assume well-posed gluing conditions with  fixed  re-initialisation maps. 

\section{Multiple Lyapunov functions for SLDS}\label{sec:stab}
We call a SLDB $\mathfrak{B}^\Sigma$ (and by extension, the SLDS $\Sigma$) \emph{asymptotically stable} if $\lim _{t\rightarrow \infty}w(t)=0$ for all $w \in \mathfrak{B}^\Sigma$. It follows from this definition that in an asymptotically stable SLDS, all mode behaviours $\B_i$ must be asymptotically stable and consequently autonomous (see \cite{yellow}, sec. 7.2). 

Asymptotic stability for linear differential behaviours can be proved by producing a \emph{higher-order quadratic Lyapunov function}, i.e. a quadratic differential function (QDF) $Q_\Psi$ such that $Q_{\Psi}\stackrel \B \geq 0$ and $\frac{d}{dt}Q_{\Psi}\stackrel \B < 0$, see sect. 4 of \cite{QDF}. The next result gives a sufficient condition for stability of SLDS in terms of quadratic \emph{multiple Lyapunov functions}  (\emph{MLFs}) (see e.g \cite{LT12} and sect. III.B of \cite{survey}). 
\begin{theorem}\label{th:MLFSLDS}
Let  $\Sigma$ be a SLDS (see Def. \ref{def:SLDS}). Assume that there exist QDFs $Q_{\Psi_i}$, $i=1,...,N$ such that 
\begin{itemize}
\item[1.] $Q_{\Psi_i}\stackrel{\B_i} \geq 0$, $i=1,...,N$;
\item[2.] $\frac{d}{dt}Q_{\Psi_i}\stackrel{\B_i}< 0$, $i=1,...,N$;
\item[3.] $\forall$ $w\in\B^\Sigma$ and $\forall$ $t_j\in\mathbb{T}_s$, 
$Q_{\Psi_{s(t_{j-1})}}(w)(t_j^-)\geq Q_{\Psi_{s(t_{j})}}(w)(t_j^+)$. 
\end{itemize}
Then $\Sigma$ is asymptotically stable. 
\end{theorem}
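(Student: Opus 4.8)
The plan is to run the standard multiple-Lyapunov-function argument, transcribed into the higher-order/behavioural setting by means of the minimal state maps $X_i$ already fixed in Section~\ref{sec:wellpos}. The first step is to reduce the data to state form: write $x_i:=X_i(\frac{d}{dt})w$, and let $\frac{d}{dt}x_i=A_i x_i$, $w=C_i x_i$ be the minimal state representation of the autonomous behaviour $\B_i$ (see App.~\ref{app:backmater:statemaps} and \cite{yellow}); in particular the instantaneous value $w(t)$ is a fixed linear image of the state. By standard QDF theory (\cite{QDF}) there are symmetric $P_i$ with $Q_{\Psi_i}(w)=x_i^\top P_i x_i$ and $\frac{d}{dt}Q_{\Psi_i}(w)=x_i^\top(A_i^\top P_i+P_i A_i)x_i$ for every $w\in\B_i$. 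Since every $v\in\R^{n_i}$ occurs as a state value of $\B_i$, hypotheses~1 and~2 are equivalent to $P_i\geq 0$ and $A_i^\top P_i+P_i A_i<0$; a nonzero vector of $\ker P_i$ would generate a trajectory of $\B_i$ along which $Q_{\Psi_i}(w)$ vanishes at some instant while staying nonnegative and being strictly decreasing there, which is impossible, so in fact $P_i>0$. Putting $Q_i:=-(A_i^\top P_i+P_i A_i)>0$ and $\mu_i:=\lambda_{\min}(Q_i)/\lambda_{\max}(P_i)>0$, one obtains the QDF estimate $\frac{d}{dt}Q_{\Psi_i}\stackrel{\B_i}{\leq}-\mu_i Q_{\Psi_i}$.

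Next, fix $w\in\B^\Sigma$, say $w\in\B^s$ with switching instants $t_1<t_2<\cdots$; by the standing non-Zeno assumption this set is discrete, hence either finite or unbounded above. Define $V(t):=Q_{\Psi_{s(t)}}(w)(t)$, which by right-continuity of $s$ coincides with $Q_{\Psi_{s(t_j)}}(w)(t)$ on each $[t_j,t_{j+1})$. On such an interval $w$ agrees with a genuine trajectory of $\B_{s(t_j)}$, so $\frac{d}{dt}V\leq-\mu V$ with $\mu:=\min_i\mu_i>0$, whence $V(t)\leq e^{-\mu(t-t_j)}V(t_j)$ for $t\in[t_j,t_{j+1})$ and, passing to the left limit, $V(t_{j+1}^-)\leq e^{-\mu(t_{j+1}-t_j)}V(t_j)$. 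Hypothesis~3 is precisely $V(t_{j+1}^-)\geq V(t_{j+1})$, so $V(t_{j+1})\leq e^{-\mu(t_{j+1}-t_j)}V(t_j)$; iterating and interpolating inside the interval containing a given $t$ gives $V(t)\leq e^{-\mu(t-t_1)}V(t_1)$ for all $t\geq t_1$ (the same bound covers the tail of a signal with only finitely many switches, via the single-mode estimate past the last switch). Since $V\geq 0$ by hypothesis~1, it follows that $V(t)\to 0$ as $t\to\infty$.

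Finally I would translate $V(t)\to 0$ back into $w(t)\to 0$: for $t\in[t_j,t_{j+1})$ with $i=s(t_j)$ we have $\|w(t)\|^2=\|C_i x_i(t)\|^2\leq\|C_i\|^2\|x_i(t)\|^2\leq\frac{\|C_i\|^2}{\lambda_{\min}(P_i)}\,x_i(t)^\top P_i x_i(t)=\frac{\|C_i\|^2}{\lambda_{\min}(P_i)}V(t)$, and since there are only finitely many modes the prefactor is bounded uniformly in $t$; hence $\|w(t)\|\to 0$. As $w\in\B^\Sigma$ was arbitrary, $\Sigma$ is asymptotically stable.

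I expect the genuinely delicate point to be the first step rather than the Lyapunov bookkeeping: cleanly justifying that each $Q_{\Psi_i}$ and its derivative reduce, along $\B_i$, to quadratic forms in the minimal state with $P_i>0$ and $-(A_i^\top P_i+P_i A_i)>0$, and noting that hypothesis~3 is used only along \emph{admissible} trajectories — i.e. only on the (possibly proper) subspace of state values consistent with the mode dynamics in the sense of Remark~\ref{rem:existence} — so that it enters the proof as a pointwise inequality on $V$ rather than as the matrix inequality $P_k\geq L_{k\rightarrow\ell}^\top P_\ell L_{k\rightarrow\ell}$. A secondary subtlety is that asymptotic stability here means mere attractivity ($\lim_{t\to\infty}w(t)=0$), so the argument must actually deliver convergence; this is what forces the uniform exponential rate $\mu=\min_i\mu_i$ obtained from hypotheses~1--2, which in turn is what makes the conclusion hold for \emph{all} (non-Zeno) switching signals, including arbitrarily fast switching.
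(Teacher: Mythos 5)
Your argument is correct and follows essentially the same route as the paper: the paper's proof merely defines the switched functional $Q_\Lambda(w)(t):=Q_{\Psi_{s(t)}}(w)(t)$, observes it is nonnegative, strictly decreasing between switches and non-increasing at switches, and appeals to ``standard arguments'' for multiple Lyapunov functions --- which is exactly the bookkeeping you carry out explicitly via the reduction to minimal state form, the positivity of the $P_i$, and the uniform rate $\mu$. The one caveat is that both you and the paper's own proof read condition 2 as pointwise strict decrease along nonzero trajectories (the appendix formally defines $\overset{\B}{<}$ in a weaker, LaSalle-type sense, under which the uniform rate $\mu$ could degenerate), but under that shared reading your proof is complete and in fact supplies the details the paper delegates to a citation.
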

\begin{IEEEproof}See Appendix \ref{app:proofs}.
\end{IEEEproof}
Conditions 1 and 2 of Th. \ref{th:MLFSLDS} are equivalent to $Q_{\Psi_i}$ being a Lyapunov function for $\B_i$, $i=1,\ldots,N$. Condition 3 requires that  the value of the multiple functional associated to $Q_{\Psi_i}$, $i=1,...,N$, does not increase at the switching instants.

We now describe a procedure, based on the calculus of QDFs and on LMIs, to compute a MLF as in Th. \ref{th:MLFSLDS}. We first recall the following result from \cite{QDF}, that reduces the computation of quadratic Lyapunov functions to the solution of two-variable polynomial equations. 
\begin{theorem}\label{th:LyapfromQDF}
Let $\B=\ker R\derb$, with $R\in\R^{\w\times\w}[\xi]$ nonsingular. If $\B$ is asymptotically stable, for every $Q\in\R^{\bullet \times \w}[\xi]$ there exist $\Psi\in\R_s^{\w\times \w}[\zeta,\eta]$ and $Y\in\R^{\w \times \w}[\xi]$ such that $Q_\Psi\geq 0$ and 
\begin{equation}\label{eq:PLE}
(\zeta+\eta)\Psi(\zeta,\eta)=Y(\zeta)^\top R(\eta)+R(\zeta)^\top Y(\eta)-Q(\zeta)^\top Q(\eta)\; .
\end{equation}
If either one of $Q$ or $Y$ is $R$-canonical, then also the other and $\Psi$ are $R$-canonical. Moreover if $\mbox{\rm rank}~\mbox{\rm col}(R(\lambda), Q(\lambda))=\w$ for all $\lambda\in\C$ such that $\det R(\lambda)=0$, then $Q_\Psi\overset{\B}{>}0$. 
\end{theorem}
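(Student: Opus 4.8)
The plan is to reduce the two-variable polynomial Lyapunov equation \eqref{eq:PLE} to the classical (one-variable, constant-matrix) Lyapunov equation. First I would fix a minimal state representation of $\B$: since $\B=\ker R(\frac{d}{dt})$ is autonomous and asymptotically stable, there are a Hurwitz matrix $A$ and a matrix $C$ with $\frac{d}{dt}x=Ax$, $w=Cx$, where $x=X(\frac{d}{dt})w$ is the minimal state associated with the state map $X$. Since every $w\in\B$ then satisfies $\frac{d^{i}}{dt^{i}}w(t)=CA^{i}x(t)$, the operator $Q(\frac{d}{dt})$ acts on $\B$ as $Q(\frac{d}{dt})w(t)=\bar C x(t)$, where $\bar C:=\sum_{i}Q_{i}CA^{i}$ (writing $Q(\xi)=\sum_{i}Q_{i}\xi^{i}$). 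Because $A$ is Hurwitz, $P:=\int_{0}^{\infty}e^{A^{\top}t}\bar C^{\top}\bar C\,e^{At}\,dt$ is well defined, symmetric and positive semidefinite, and is the unique solution of $A^{\top}P+PA=-\bar C^{\top}\bar C$.

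The candidate is then $\Psi(\zeta,\eta):=X(\zeta)^{\top}P\,X(\eta)\in\R_s^{\w\times\w}[\zeta,\eta]$. For an arbitrary smooth $w$ we have $Q_{\Psi}(w)(t)=\bigl(X(\tfrac{d}{dt})w(t)\bigr)^{\top}P\bigl(X(\tfrac{d}{dt})w(t)\bigr)\geq 0$ since $P\geq 0$, which yields $Q_{\Psi}\geq 0$; and on $\B$ we get $Q_{\Psi}(w)=x^{\top}Px$, hence $\frac{d}{dt}Q_{\Psi}(w)=x^{\top}(A^{\top}P+PA)x=-(\bar C x)^{\top}(\bar C x)=-\bigl|Q(\tfrac{d}{dt})w\bigr|^{2}$. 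Thus the QDF $\frac{d}{dt}Q_{\Psi}+Q_{Q(\zeta)^{\top}Q(\eta)}$ vanishes identically on $\B=\ker R(\tfrac{d}{dt})$. Invoking the standard characterisation of QDFs that vanish on a kernel behaviour defined by a square nonsingular $R$ — namely $Q_{\Delta}\stackrel{\B}{=}0$ iff $\Delta(\zeta,\eta)=Y(\zeta)^{\top}R(\eta)+R(\zeta)^{\top}Y(\eta)$ for some polynomial matrix $Y$ — applied to $\Delta(\zeta,\eta):=(\zeta+\eta)\Psi(\zeta,\eta)+Q(\zeta)^{\top}Q(\eta)$ gives precisely \eqref{eq:PLE}. (Alternatively one can stay inside two-variable polynomial algebra: \eqref{eq:PLE} is solvable for a symmetric $\Psi$ iff its right-hand side is divisible by $\zeta+\eta$, i.e. vanishes for $\eta=-\zeta$, which reduces to the one-variable equation $Y(\zeta)^{\top}R(-\zeta)+R(\zeta)^{\top}Y(-\zeta)=Q(\zeta)^{\top}Q(-\zeta)$; its solvability in $Y$ is exactly where the Hurwitz property of $R$, equivalently asymptotic stability of $\B$, is used, and $\Psi$ is then recovered by the division.)

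It remains to address the two refinements. For the canonicity statement, I would use that \eqref{eq:PLE} recovers $\Psi$ from $Y$ and $Q$ by a single polynomial division by $\zeta+\eta$, so that $R$-canonicity of any one of $Q$, $Y$ forces it on the others via uniqueness of $R$-canonical representatives; I expect this bookkeeping, rather than the construction, to be the least pleasant part. For the strict inequality, the hypothesis $\rank\col(R(\lambda),Q(\lambda))=\w$ at every root $\lambda$ of $\det R$ is precisely observability of the pair $(\bar C,A)$; this makes $P>0$, and since a nonzero $w\in\B$ has $x(t)\neq 0$ for all $t$, it follows that $Q_{\Psi}(w)(t)=x(t)^{\top}Px(t)>0$, i.e. $Q_{\Psi}\stackrel{\B}{>}0$. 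The main obstacle is setting up rigorously the dictionary between the polynomial data $(R,Q,Y,\Psi)$ and the state-space data $(A,C,\bar C,P)$ — in particular the identity $Q_{\Psi}=X^{\top}PX$ tying $\Psi$ to the state map, and the equivalence between the column-rank condition and observability of $(\bar C,A)$.
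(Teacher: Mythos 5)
Your overall architecture is sound and is essentially a reconstruction of the results the paper merely cites (Th.~4.8 and 4.12 of the Willems--Trentelman QDF paper): build $\Psi(\zeta,\eta)=X(\zeta)^\top P X(\eta)$ from the constant Lyapunov equation $A^\top P+PA=-\bar C^\top\bar C$, get $Q_\Psi\geq 0$ from $P\geq 0$, get strict positivity from the PBH/observability reading of the rank condition. Those parts are correct.

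The one genuine gap is the step that converts ``$Q_\Delta$ vanishes on $\B$'' into the representation $\Delta(\zeta,\eta)=Y(\zeta)^\top R(\eta)+R(\zeta)^\top Y(\eta)$ with a \emph{one-variable} $Y$. The standard characterisation (Th.~3.1 of the QDF paper) only gives $\Delta(\zeta,\eta)=R(\zeta)^\top F(\zeta,\eta)+F(\eta,\zeta)^\top R(\eta)$ with a \emph{two-variable} $F$, and the ``iff'' you invoke is false as stated: for $\w=1$, $R(\xi)=\xi+1$, the symmetric form $\Delta(\zeta,\eta)=\zeta\eta R(\zeta)R(\eta)$ vanishes identically on $\ker R\derb$, yet $Y(\zeta)R(\eta)+R(\zeta)Y(\eta)=\Delta$ forces $Y(\xi)=R(\xi)Z(\xi)$ and then $Z(\zeta)+Z(\eta)=\zeta\eta$, which is impossible. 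So the reduction from $F(\zeta,\eta)$ to $Y(\xi)$ is precisely the nontrivial content of the theorem, not a citation you can wave at. Your parenthetical alternative is the right repair: solve the one-variable equation $Y(\zeta)^\top R(-\zeta)+R(\zeta)^\top Y(-\zeta)=Q(\zeta)^\top Q(-\zeta)$ (solvable because $R(\xi)$ Hurwitz makes $R(\xi)$ and $R(-\xi)$ left/right coprime, a Bezout argument), then define $\Psi$ by division by $\zeta+\eta$. But then the two halves of your write-up must be glued: the $\Psi$ obtained algebraically is not automatically your $X(\zeta)^\top PX(\eta)$, so its \emph{global} nonnegativity (the theorem asserts $Q_\Psi\geq 0$ on all of $\Cinf$, not only along $\B$) still has to be argued, e.g.\ by passing to the $R$-canonical representative $X(\zeta)^\top\widetilde P X(\eta)$, deducing $\widetilde P\geq 0$ from $Q_\Psi(w)(t)=\int_t^\infty |Q\derb w|^2$ along $\B$ and surjectivity of the minimal state map, and checking that this reduction preserves \eqref{eq:PLE}. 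Finally, the $R$-canonicity claim is asserted rather than proved; it follows from the formula $Y(\xi)=\lim_{\mu\to\infty}\mu R(\mu)^{-\top}\Psi(\mu,\xi)$ and strict properness considerations, and deserves more than ``bookkeeping.''
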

\begin{IEEEproof}The result follows from Th. 4.8 and Th. 4.12 of \cite{QDF}.
\end{IEEEproof}
Thus, a quadratic Lyapunov function $Q_\Psi$ can be computed by choosing some $Q$ and solving   the \emph{polynomial Lyapunov equation (PLE)} (\ref{eq:PLE}). Algebraic methods for solving it are illustrated in \cite{PLE}; we devise an LMI-based one more suitable to our purposes. We first relate (\ref{eq:PLE}) with a \emph{matrix} equation. 
\begin{proposition}\label{prop:PLEviaLMI}
Let  $\B=\ker R\derb$, with $R\in\R^{\tt w \times w}[\xi]$ nonsingular. Let $X\in\R^{n \times \w}[\xi]$ be a minimal state map for $\B$. Assume that $\B$ is asymptotically stable. Let $Q$,$Y$, and $\Psi$ satisfy (\ref{eq:PLE}), and assume that either $Q$ or $Y$ is $R$-canonical. There exist $\overline{K}=\overline{K}^\top\in\R^{n\times n}$, $\overline{Y}\in\R^{\w\times n}$, $\overline{Q}\in\R^{\bullet \times n}$ such that $\Psi(\zeta,\eta)=X(\zeta)^\top \overline{K} X(\eta)$, $Y(\xi)=\overline{Y} X(\xi)$, and $Q(\xi)=\overline{Q} X(\xi)$. Write $R(\xi)=\sum_{i=0}^L R_i\xi^i$, with $R_i\in\R^{\w\times\w}$, $i=0,\ldots,L$; then there exist $X_i\in\R^{n\times\w}$, $i=0,1,...,L-1$, such that $X(\xi)=\sum_{i=0}^{L-1} X_i \xi^i$. Moreover, denote the {coefficient matrices} of $R(\xi)$ and $X(\xi)$ by $\widetilde{R}:=\begin{bmatrix} R_0&\ldots&R_L \end{bmatrix}$ and $\widetilde{X}:=\begin{bmatrix} X_0&\ldots&X_{L-1} \end{bmatrix}$. The following statements are equivalent: 
\begin{itemize}
\item[1.] $\Psi(\zeta,\eta)$, $Y(\xi)$ and $Q(\xi)$ satisfy (\ref{eq:PLE}); 
\item[2.] There exist $\overline{K}=\overline{K}^\top\in\R^{n\times n}$, $\overline{Y}\in\R^{\w\times n}$, $\overline{Q}\in\R^{\bullet \times n}$ such that 
\begin{eqnarray}\label{eq:PLEviaLMI}
&&\begin{bmatrix} 0_{\w \times n}\\ \widetilde{X}^\top
\end{bmatrix} \overline{K} \begin{bmatrix}\widetilde{X}& 0_{n \times \w}
\end{bmatrix}+\begin{bmatrix} \widetilde{X}^\top\\0_{\w \times n}
\end{bmatrix} \overline{K} \begin{bmatrix}  0_{n \times \w}&\widetilde{X}\end{bmatrix}-\begin{bmatrix} \widetilde{X}^\top\\0_{\w \times n}
\end{bmatrix}\overline{Y}^\top \widetilde{R}- \widetilde{R}^\top  \overline{Y} \begin{bmatrix}  \widetilde{X}&0_{n \times \w}\end{bmatrix}\nonumber 
\\
&&+ \begin{bmatrix}  \widetilde{X}^\top\\0_{\w \times n}
\end{bmatrix} \overline{Q}^\top \overline{Q} \begin{bmatrix}   \widetilde{X}&0_{n \times \w}\end{bmatrix}=0\; .
\end{eqnarray}
\end{itemize}
If moreover, $\rank~\mbox{\rm col}(R(\lambda),Q(\lambda))=\tt w$ for all $\lambda\in\C$, then $1)$ is equivalent with $2)$ and $\overline{K}>0$.
\end{proposition}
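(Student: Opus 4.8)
The plan is to prove the equivalence $1) \Leftrightarrow 2)$ by showing that the polynomial identity (\ref{eq:PLE}) is, after substituting the stated factorisations $\Psi(\zeta,\eta)=X(\zeta)^\top \overline{K} X(\eta)$, $Y(\xi)=\overline{Y} X(\xi)$, $Q(\xi)=\overline{Q} X(\xi)$, literally the same statement as the matrix equation (\ref{eq:PLEviaLMI}). First I would justify the existence of the factorisations: since either $Q$ or $Y$ is $R$-canonical, Theorem \ref{th:LyapfromQDF} gives that all of $\Psi$, $Y$, $Q$ are $R$-canonical, and a standard property of minimal state maps (App. \ref{app:backmater:statemaps}) is that every $R$-canonical polynomial differential operator on $\B$ factors through $X$; applied on both sides this also yields the two-variable factorisation of $\Psi$ through $X(\zeta)^\top \cdots X(\eta)$, and gives that $\deg X \le L-1$ componentwise, hence the coefficient decomposition $X(\xi)=\sum_{i=0}^{L-1}X_i\xi^i$. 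I would also record that $\overline{K}$ may be taken symmetric because $\Psi$ is symmetric and $X$ has full row rank as a polynomial matrix.

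Next I would do the bookkeeping that converts the polynomial identity into the matrix identity. The key observation is the ``vectorisation'' map sending a two-variable polynomial matrix $\sum_{i,j} M_{ij}\zeta^i\eta^j$ to the constant block matrix $(M_{ij})_{i,j}$: it is linear and injective, and under it the product $A(\zeta)^\top B(\eta)$ for $A(\xi)=\sum A_i\xi^i$, $B(\xi)=\sum B_j\xi^j$ maps to $\widetilde{A}^\top \widetilde{B}$ where $\widetilde{A}=[A_0\ \ldots]$, $\widetilde{B}=[B_0\ \ldots]$. The only subtlety is matching the \emph{degrees of the coefficient stacks}: on the right-hand side of (\ref{eq:PLE}) the terms $Y(\zeta)^\top R(\eta)$ etc. have $\eta$-degree up to $L$ and $\zeta$-degree up to $L$ (since $\deg Y\le L-1 < L$, but $\deg R = L$), whereas $(\zeta+\eta)\Psi(\zeta,\eta)$ has degrees up to $L$ in each variable as well (as $\deg\Psi\le (L-1,L-1)$). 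So I would pad $\widetilde{X}$ with a zero block of width $\w$ on the appropriate side, which is exactly why (\ref{eq:PLEviaLMI}) features $[\widetilde{X}\ 0_{n\times\w}]$ and $[0_{n\times\w}\ \widetilde{X}]$ and the $(L+1)$-block-wide $\widetilde{R}$; multiplication by $\zeta$ shifts $\widetilde{X}$ into the blocks $0,1,\ldots,L-1$ of a length-$(L+1)$ stack via the left padding, and multiplication by $\eta$ into blocks $1,\ldots,L$ via the right padding, so $(\zeta+\eta)\Psi$ vectorises to the first two terms of (\ref{eq:PLEviaLMI}). Matching the remaining terms is then term-by-term identical. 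Injectivity of vectorisation gives $1)\Rightarrow 2)$; the reverse is immediate since the matrix identity, read back through the same map, reproduces (\ref{eq:PLE}).

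For the final clause, I would argue the equivalence of $1)$ (under the rank condition $\rank\,\col(R(\lambda),Q(\lambda))=\w$ for all $\lambda\in\C$) with ``$2)$ and $\overline{K}>0$''. The forward direction uses the last sentence of Theorem \ref{th:LyapfromQDF}: the rank condition forces $Q_\Psi \overset{\B}{>} 0$, i.e. $Q_\Psi$ is strictly positive along $\B$; combined with conditions 1 and 2 of the Lyapunov setup, $Q_\Psi$ is a bona fide Lyapunov function, and since $Q_\Psi(w) = |X(\tfrac{d}{dt})w|^2_{\overline K}$ evaluated along trajectories equals $x^\top \overline{K} x$ on the minimal state $x=X(\tfrac{d}{dt})w$, strict positivity of $Q_\Psi$ along $\B$ together with controllability/observability of the minimal state representation (the state $x$ ranges over all of $\R^n$) forces $\overline{K}>0$. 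The converse is clear: $\overline{K}>0$ and $Q_\Psi(w)=x^\top\overline K x$ give $Q_\Psi\overset{\B}{>}0 \ge 0$, recovering $1)$ together with the strict-positivity refinement.

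The main obstacle I anticipate is the degree/padding bookkeeping in the vectorisation step: getting the block widths of the two padded copies of $\widetilde{X}$ and of $\widetilde{R}$ to line up correctly with the shift-by-$\zeta$ and shift-by-$\eta$ actions, and verifying that no ``spillover'' term of degree $L$ in $\Psi$ is lost — this is exactly where the bound $\deg X\le L-1$ (equivalently $\deg\Psi\le(L-1,L-1)$) is used, and it must be invoked explicitly. A secondary, more conceptual point requiring care is the passage from ``$Q_\Psi$ strictly positive along $\B$'' to ``$\overline{K}>0$'': it rests on the fact that for a minimal state map the map $w\mapsto X(\tfrac{d}{dt})w(0)$ is surjective onto $\R^n$, so that the quadratic form $x\mapsto x^\top\overline K x$ inherits strict positivity on all of $\R^n$, and I would cite the relevant state-map property from the appendix for this.
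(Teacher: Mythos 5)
Your proposal is correct and follows essentially the same route as the paper's proof: existence of $\overline{K},\overline{Y},\overline{Q}$ from Th.~\ref{th:LyapfromQDF} together with the fact that the rows of a minimal state map form a basis of the $R$-canonical row vectors, the equivalence of $1)$ and $2)$ by comparing coefficient matrices after writing $X(\xi)=[\widetilde{X}\;\;0]S_L(\xi)$, $\xi X(\xi)=[0\;\;\widetilde{X}]S_L(\xi)$, $R(\xi)=\widetilde{R}S_L(\xi)$ with $S_L(\xi)=\col(I_\w,\xi I_\w,\ldots,\xi^L I_\w)$ (your ``vectorisation'' is exactly this), and $\overline{K}>0$ from strict positivity of $Q_\Psi$ along $\B$ plus surjectivity of the minimal state map. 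The only cosmetic slip is in your description of which padded copy realises the $\zeta$-shift (it lands in blocks $1,\ldots,L$, not $0,\ldots,L-1$), which does not affect the argument.
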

\begin{IEEEproof} See Appendix \ref{app:proofs}. \end{IEEEproof}

We  now show how to compute MLFs for SLDS. For ease of exposition we assume that all roots of $\det R_k(\xi)$ have equal algebraic  and geometric multiplicity. 
\begin{theorem}\label{th:LMI4MLF}
Let $\Sigma$ be a SLDS (see Def. \ref{def:SLDS}), with $\B_k=\ker R_k\derb$ asymptotically stable, $k=1,\ldots,N$ and $R_k\in\R^{\w\times\w}[\xi]$  nonsingular. Let $X_k\in\R^{n \times \w}[\xi]$ be a minimal state map for $\B_k$. Write $R_k(\xi)=\sum_{i=0}^{L_k} R_{k,i}\xi^i$, and denote the  coefficient matrix of $R_k(\xi)$ by $\widetilde{R}_k:=\begin{bmatrix} R_{k,0}&\ldots&R_{k,L_k}\end{bmatrix}$ and that of $X_k(\xi)$ by $\widetilde{X}_k:=\begin{bmatrix}X_{k,0}&\ldots&X_{k,L_k-1}\end{bmatrix}$. Denote the roots of $\det~R_k(\xi)$ by $\lambda_{k,i}$, $i=1,\ldots n_k$. Assume that the algebraic multiplicity of $\lambda_{k,i}$ equals  the dimension of $\ker R_k(\lambda_{k,i})$. Let  $w_{k,i}\in\C^{\w}$ be such that $R_k(\lambda_{k,i})w_{k,i}=0$, with the $w_{k,i}$ associated with equal $\lambda_{k,i}$  linearly independent. Define $V_k\in\C^{n_k\times n_k}$ by $V_k:=\begin{bmatrix}X_k(\lambda_{k,1})w_{k,1}& \ldots& X_k(\lambda_{k,n_k})w_{k,n_k}\end{bmatrix}$, $k=1,\ldots,N$. Denote by $L_{k\rightarrow \ell}$, $k,\ell=1\ldots,N$, $k\neq \ell$, the re-initialisation maps of $\Sigma$. 

If there exist $\overline{K}_k\in\R^{n_k\times n_k}$, $\overline{Y}_k\in\R^{\w\times n_k}$,  $k=1\ldots,N$ such that
\begin{eqnarray}\label{eq:LMI4MLF1}
&&\widetilde{\Phi}_k:=\begin{bmatrix} 0_{\w \times n}\\ \widetilde{X}_k^\top
\end{bmatrix} \overline{K}_k\begin{bmatrix}\widetilde{X}_k& 0_{n \times \w}
\end{bmatrix}+\begin{bmatrix} \widetilde{X}_k^\top\\0_{\w \times n}
\end{bmatrix} \overline{K}_k \begin{bmatrix}  0_{n \times \w}&\widetilde{X}_k\end{bmatrix}-\begin{bmatrix} \widetilde{X}_k^\top\\0_{\w \times n}
\end{bmatrix}\overline{Y}_k^\top \widetilde{R}_k\nonumber\\&&- \widetilde{R}_k^\top  \overline{Y}_k \begin{bmatrix}  \widetilde{X}_k&0_{n \times \w}\end{bmatrix}\leq 0\; ,
\end{eqnarray}
then there exist $\overline{F}_k\in\R^{n_k\times n_k}$ such that $\widetilde{\Phi}_k=\begin{bmatrix} \widetilde{X}_k^\top\\0_{\w \times n}\end{bmatrix} \overline{F}_k\begin{bmatrix} \widetilde{X}_k&0_{n \times \w} \end{bmatrix}$, $k=1,\ldots,N$. 

Moreover, if for $k,\ell=1,\ldots,N$, $\ell \neq k$, it holds that 
\begin{eqnarray}\label{eq:LMI4MLF2}
\overline{F}_k <0 \mbox{\rm \; and }
V_k^{\ast}  \overline{K}_k V_k \geq  V_k^\ast L_{k\rightarrow \ell}^\top \overline{K}_\ell L_{k\rightarrow \ell}V_k\; ,
\end{eqnarray}
then $\Sigma$ is asymptotically stable, and $\{ X_k(\zeta)^\top \overline{K}_k X_k(\eta)\}_{k=1,\ldots,N}$ induces a MLF. 
\end{theorem}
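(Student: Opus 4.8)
The strategy is to reduce the statement to an application of Theorem \ref{th:MLFSLDS}, using Proposition \ref{prop:PLEviaLMI} to turn the LMIs \eqref{eq:LMI4MLF1} into the existence of quadratic Lyapunov functions for the individual modes, and then checking that the algebraic inequality in \eqref{eq:LMI4MLF2} implies the non-increase condition 3 of Theorem \ref{th:MLFSLDS} at the switching instants. So the proof has three blocks: (i) produce the $\overline{F}_k$ and the mode Lyapunov functions $Q_{\Psi_k}$ with $\Psi_k(\zeta,\eta)=X_k(\zeta)^\top \overline{K}_k X_k(\eta)$; (ii) relate $\frac{d}{dt}Q_{\Psi_k}$ to $\overline{F}_k$ and deduce conditions 1 and 2; (iii) verify condition 3 from the re-initialisation maps and the matrix inequality involving $V_k$.

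For block (i), first I would argue that $\widetilde{\Phi}_k$ factors as $\begin{bmatrix} \widetilde{X}_k^\top\\ 0\end{bmatrix}\overline{F}_k\begin{bmatrix} \widetilde{X}_k & 0\end{bmatrix}$. The point is that $\widetilde X_k$ is the coefficient matrix of a \emph{minimal} state map, hence has full row rank $n_k$; since every term on the right-hand side of \eqref{eq:LMI4MLF1} already has its column space inside $\mathrm{im}\begin{bmatrix} \widetilde X_k^\top\\0\end{bmatrix}$ and its row space inside the corresponding row space (each summand is of the form $\begin{bmatrix}*\\ \widetilde X_k^\top\end{bmatrix}(\cdot)\begin{bmatrix}\widetilde X_k&*\end{bmatrix}$ up to the shift of the zero block), one can peel off $\widetilde X_k$ on left and right using a left inverse and obtain a unique $\overline F_k$. (Here I would be slightly careful about the two different placements of the zero block, $[\,\widetilde X_k\ \ 0\,]$ versus $[\,0\ \ \widetilde X_k\,]$; the staggering is exactly what encodes the factor $(\zeta+\eta)$ in \eqref{eq:PLE}, and translating back to the polynomial identity via $\widetilde X_k$, $\widetilde R_k$ being coefficient matrices shows the factorisation is consistent.) Then setting $Q_k(\xi):=\overline Q_k X_k(\xi)$ with $\overline Q_k^\top\overline Q_k:=-\overline F_k$ (possible since $\overline F_k<0$), the matrix identity \eqref{eq:PLEviaLMI} holds, so by Proposition \ref{prop:PLEviaLMI} the polynomial triple $(\Psi_k,Y_k,Q_k)$ solves the PLE \eqref{eq:PLE}; by Theorem \ref{th:LyapfromQDF} we get $Q_{\Psi_k}\overset{\B_k}{\geq}0$, i.e. condition 1 of Theorem \ref{th:MLFSLDS}.

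For block (ii), applying $\frac{d}{dt}$ to $Q_{\Psi_k}$ and using the PLE gives $\frac{d}{dt}Q_{\Psi_k}(w) \overset{\B_k}{=} -|Q_k(\frac{d}{dt})w|^2 = -|\overline Q_k X_k(\frac{d}{dt})w|^2$, and since $\overline Q_k^\top\overline Q_k=-\overline F_k>0$ and $X_k(\frac{d}{dt})w$ runs over all of $\R^{n_k}$ as $w$ runs over $\B_k$ (minimality of the state map and asymptotic stability — the state trajectory is controllable to every value at a fixed instant in the autonomous sense used here), this quantity is strictly negative for nonzero $w$; this is condition 2. For block (iii), take $w\in\B^\Sigma$ and $t_j\in\mathbb T_s$ with $s(t_{j-1})=k$, $s(t_j)=\ell$. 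By the re-initialisation property recalled just before Section \ref{sec:stab}, $X_\ell(\frac{d}{dt})w(t_j^+)=L_{k\rightarrow\ell}X_k(\frac{d}{dt})w(t_j^-)$. Writing $v:=X_k(\frac{d}{dt})w(t_j^-)$ we need $v^\top\overline K_k v \geq v^\top L_{k\rightarrow\ell}^\top\overline K_\ell L_{k\rightarrow\ell} v$. But $v$ is not arbitrary in $\R^{n_k}$: the final state of an admissible trajectory of the autonomous mode $\B_k$ lies in $\mathrm{im}\,V_k$ (by the modal decomposition $w=\sum\alpha_{k,i}w_{k,i}\exp_{\lambda_{k,i}t}$ from Remark \ref{rem:existence}, $v=V_k\alpha_k$). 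Hence it suffices that $\overline K_k - L_{k\rightarrow\ell}^\top\overline K_\ell L_{k\rightarrow\ell}\succeq 0$ \emph{when restricted to $\mathrm{im}\,V_k$}, which is exactly $V_k^\ast(\overline K_k-L_{k\rightarrow\ell}^\top\overline K_\ell L_{k\rightarrow\ell})V_k\geq 0$, the second inequality in \eqref{eq:LMI4MLF2}. (One must note that $v$ is real and $\alpha_k=V_k^{-1}v$, so the Hermitian-form inequality on $\C^{n_k}$ evaluated at complex $\alpha_k$ is what is genuinely needed, and $V_k^\ast(\cdot)V_k\geq 0$ delivers precisely that.) This gives condition 3, and Theorem \ref{th:MLFSLDS} then yields asymptotic stability, with $\{X_k(\zeta)^\top\overline K_k X_k(\eta)\}_k$ the claimed MLF.

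The main obstacle I expect is block (i): cleanly justifying the factorisation $\widetilde\Phi_k = \begin{bmatrix}\widetilde X_k^\top\\0\end{bmatrix}\overline F_k\begin{bmatrix}\widetilde X_k&0\end{bmatrix}$ despite the shifted zero blocks, and simultaneously making sure the resulting $\overline Q_k$ plugs back into \eqref{eq:PLEviaLMI} to give an \emph{exact} solution of the PLE (not just an inequality) so that Proposition \ref{prop:PLEviaLMI} and then Theorem \ref{th:LyapfromQDF} apply verbatim. The secondary delicate point is the surjectivity claim used in block (ii) and the image characterisation $v\in\mathrm{im}\,V_k$ in block (iii) — both rest on the modal decomposition and on $V_k$ being nonsingular, which Remark \ref{rem:existence} has already established, so these should be short once stated carefully.
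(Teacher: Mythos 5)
Your proposal is correct and follows essentially the same route as the paper's proof: multiply the LMI \eqref{eq:LMI4MLF1} on the left and right by the coefficient vectors $S_{L_k}(\zeta)^\top$ and $S_{L_k}(\eta)$ to recover the polynomial identity $(\zeta+\eta)\Psi_k(\zeta,\eta)-Y_k(\zeta)^\top R_k(\eta)-R_k(\zeta)^\top Y_k(\eta)=\Phi_k(\zeta,\eta)$, deduce from the $R_k$-canonicity of $Y_k$ (via Th.~\ref{th:LyapfromQDF}) that $\Phi_k$ is $R_k$-canonical so that $\overline F_k$ exists, and then verify conditions 1--3 of Th.~\ref{th:MLFSLDS} using $\overline F_k<0$ and the $V_k$-conjugated inequality together with the re-initialisation maps. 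The only caveat is that your first-instinct matrix-level ``peeling off'' in block (i) does not work as stated --- the column spaces of the individual summands of $\widetilde\Phi_k$ are not all contained in $\mathrm{im}\,\col(\widetilde X_k^\top,0)$ --- but the parenthetical fallback you describe (translating back to the two-variable polynomial identity and invoking canonicity) is precisely the argument the paper uses.
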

\begin{IEEEproof}See Appendix \ref{app:proofs}. 
\end{IEEEproof}
Th. \ref{th:LMI4MLF} reduces the computation of quadratic MLFs to the solution of a system of structured LMIs (\ref{eq:LMI4MLF1})-(\ref{eq:LMI4MLF2}), a straightforward matter for standard LMI solvers. 

\begin{remark}\rm 
The fact that no (multiple) quadratic Lyapunov function exists cannot be used to conclude that a system is unstable: the class of quadratic Lyapunov functionals is not \emph{universal} in the sense of \cite{Blanchini95}, see Corollary 4.3 and Remark 4.1 p. 457. On this, see also Example \ref{ex:MLFs} below. 

 The class of \emph{polyhedral Lyapunov functions} (PLFs) is universal for linear systems with structured uncertainties;  in \cite{polyhedral} PLFs are applied  to linear switched systems in state space form, and a numerical procedure to overcome the  complexity of PLF computations is illustrated, see pp. 1021-1022  \emph{ibid}. \qed
\end{remark}

\begin{example}\label{ex:MLFs}\rm 
The SLDS in Ex. \ref{ex:elcirc} is stable. A MLF is $(Q_{\Psi_1},Q_{\Psi_2})$, where $\Psi_1(\zeta,\eta)=\begin{bmatrix} 0\\ 1\end{bmatrix} \begin{bmatrix} 0& 1\end{bmatrix}=\Psi_2(\zeta,\eta)$, inducing the QDFs $Q_{\Psi_1}(w)=w_2^2=Q_{\Psi_2}(w)$.  Their derivatives along  $\B_1$ and $\B_2$ equal $-2 w_2\frac{d}{dt} w_2=-2 w_2^2$;  due to the gluing conditions, the value of the MLF is the same before and after the switch. 

For the system in Ex. \ref{ex:math}, straightforward computations show that since the only $R_i$-canonical quadratic Lyapunov functionals for $\B_i$ are of the form $\Psi_i(\zeta,\eta)=c\begin{bmatrix} 0\\ 1\end{bmatrix} \begin{bmatrix} 0& 1\end{bmatrix}$, $i=1,2$ for $c>0$, no quadratic multiple Lyapunov functions for the SLDS exist. In fact,  an argument analogous to that of pp. 126-ff. of \cite{trennThesis} proves that  the system is unstable. \qed 
\end{example}

\begin{remark}\rm
QDFs act on $\Cinf$-functions, while  trajectories of a SLDS are  non-differentiable; however, this mismatch in differentiability is  irrelevant to Th. \ref{th:LMI4MLF} and  the other results of this paper. Indeed, we only use the calculus of QDFs as an \emph{algebraic} tool. For example, in the proof of Th. \ref{th:LMI4MLF} when considering the value of $Q_{\Psi_k}$ and $Q_{\Psi_\ell}$ before and after a switch, only the properties of their coefficient matrices are  used. \qed
\end{remark}

\begin{remark}\label{rem:LMImorecons}\rm 
Th. \ref{th:LMI4MLF} and the associated LMI-based procedure to find a MLF assume that the  $\lambda_{k,i}$ and   associated directions $w_{k,i}$ are known. If one wants to avoid such pre-computations, a weaker (i.e. more conservative) sufficient condition for the existence of a multiple Lyapunov function can be obtained by solving (\ref{eq:LMI4MLF1}) together with $\overline{F}_k<0$ and $\overline{K}_k \geq   L_{i\rightarrow \ell}^\top \overline{K}_\ell L_{k\rightarrow \ell}$ in place of (\ref{eq:LMI4MLF2}).

\end{remark}

\begin{remark}\rm 
For state-space switched systems, $R_k(\xi)=\xi I_n-A_k$ and $X_k(\xi)=I_n$, $k=1,\ldots,N$; straightforward computations yield that in (\ref{eq:LMI4MLF1}) $\overline{Y}_k=\overline{K}_k$; with  the first condition in (\ref{eq:LMI4MLF2}) we obtain the matrix Lyapunov equations $A_k^\top \overline{K}_k+\overline{K}_kA_k< 0$. The second condition in (\ref{eq:LMI4MLF2}) reduces to the classical condition on the reset maps (see e.g. Cor. 2.2 of \cite{paxman}). For the  case of switched DAE's, see sect. 6.3 of \cite{trenn}. 
\end{remark}

We conclude with an   example illustrating our modelling framework and the application of Th. \ref{th:LMI4MLF} in a realistic setting. 
\begin{example}\label{ex:sourceconv}\rm 
Some \emph{source converters} used in distributed power systems (see e.g. \cite{PeLe04}) consist of a traditional DC-DC boost converter coupled with a (dis-)connectable load, see Fig. \ref{boostRL}. 
\begin{figure}[htb!]
\begin{center}
\includegraphics[scale=0.5]{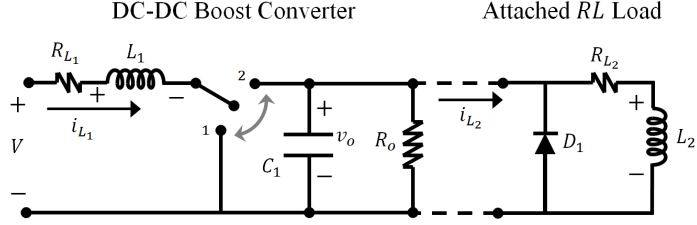}    
\caption{Source converter.}
\label{boostRL}
\end{center}
\end{figure}
We take $w=\mbox{\rm col}(i_{L_1},v_o)$ as the external variable. In order to deal with autonomous behaviours, set the input voltage $V=0$. From standard circuit modelling we conclude that the modes are $\mathcal{F}=\{\B_k=\ker~R_k\derb\}_{k=1,\ldots,4}$ where

\begin{eqnarray*}
& &R_1(\xi):=\begin{bmatrix} L_1 \xi +R_{L_1}&0\\0&C_1 \xi +\displaystyle \frac{1}{R_o}\end{bmatrix}\;,\;R_2(\xi):=\begin{bmatrix} L_1 \xi +R_{L_1}&1\\-1&C_1 \xi +\displaystyle \frac{1}{R_o}\end{bmatrix}\;, \\
& &R_3(\xi):=\begin{bmatrix} L_1 \xi +R_{L_1}&0\\0&L_2 C_1 \xi^2 + \left(R_{L_2} C_1 + \displaystyle\frac{L_2}{R_o}\right)\xi+ \displaystyle\frac{R_{L_2}}{R_o}+1\end{bmatrix}\\
& &R_4(\xi):=\begin{bmatrix} L_1 \xi +R_{L_1}&1\\-L_2 \xi -R_{L_2}&L_2 C_1 \xi^2 + \left(R_{L_2} C_1 + \displaystyle\frac{L_2}{R_o}\right)\xi+ \displaystyle\frac{R_{L_2}}{R_o}+1\end{bmatrix}\;. 
\end{eqnarray*}

The  gluing conditions derived from physical considerations are 
\begin{eqnarray*}
\left(I_2,I_2\right)&=&\left(G^+_{1\rightarrow 2}(\xi),G^-_{1\rightarrow 2}(\xi)\right)=\left(G^+_{2\rightarrow 1}(\xi),G^-_{2\rightarrow 1}(\xi)\right)=
\left(G^+_{3\rightarrow 1}(\xi),G^-_{3\rightarrow 1}(\xi)\right)\\
& =&\left(G^+_{3\rightarrow 2}(\xi),G^-_{3\rightarrow 2}(\xi)\right)=\left(G^+_{4\rightarrow 1}(\xi),G^-_{4\rightarrow 1}(\xi)\right)=\left(G^+_{4\rightarrow 2}(\xi),G^-_{4\rightarrow 2}(\xi)\right)\;;
\end{eqnarray*}
\begin{eqnarray*}
\left(G^+_{1\rightarrow 3}(\xi),G^-_{1\rightarrow 3}(\xi)\right)&:=&\left(\begin{bmatrix} 1&0\\0&1\\0 & -C_1\xi- \displaystyle \frac{1}{R_o}\end{bmatrix},\begin{bmatrix} 1& 0\\0 &1 \\0 & 0\end{bmatrix}\right)=:\left(G^+_{2\rightarrow 3}(\xi),G^-_{2\rightarrow 3}(\xi)\right)\;;
\end{eqnarray*}
\[
\left(G^+_{1\rightarrow 4}(\xi),G^-_{1\rightarrow 4}(\xi)\right):= \left(\begin{bmatrix} 1&0\\0&1\\1 & -C_1\xi- \displaystyle \frac{1}{R_o}\end{bmatrix},\begin{bmatrix} 1& 0\\0 &1 \\0 & 0\end{bmatrix}\right)=:\left(G^+_{2\rightarrow 4}(\xi),G^-_{2\rightarrow 4}(\xi)\right)
\]
\[
\left(G^+_{3\rightarrow 4}(\xi),G^-_{3\rightarrow 4}(\xi)\right):=\left(\begin{bmatrix} 1&0\\0&1\\1 & -C_1\xi- \displaystyle \frac{1}{R_o}\end{bmatrix},\begin{bmatrix} 1&0\\0&1\\0 & -C_1\xi- \displaystyle \frac{1}{R_o}\end{bmatrix}\right)=:\left(G^-_{4\rightarrow 3}(\xi),G^+_{4\rightarrow 3}(\xi)\right)\;. 
\] 
The following polynomial differential operators  induce state maps for $\B_k$, $k=1,\ldots,4$: 
\[
X_1(\xi)=X_2(\xi):=\begin{bmatrix} 1&0\\0&1\end{bmatrix}\;;\;\;X_3(\xi):=\begin{bmatrix} 1&0\\0&1\\0 & -C_1\xi- \displaystyle \frac{1}{R_o}\end{bmatrix}\;;\;\;X_4(\xi):=\begin{bmatrix} 1&0\\0&1\\1 & -C_1\xi- \displaystyle \frac{1}{R_o}\end{bmatrix}\;.
\]
They can be derived by physical considerations or automatically, using the procedures in \cite{statemaps}. Proceeding as in sect. \ref{sec:wellpos}, we compute the  re-initialisation maps 
\[
L_{1\rightarrow 2}=L_{2\rightarrow 1}:=\begin{bmatrix} 1&0\\0&1\end{bmatrix}\;;\;\;L_{1\rightarrow 3}=L_{1\rightarrow 4}=L_{2\rightarrow 3}=L_{2\rightarrow 4}:=\begin{bmatrix} 1&0\\0&1\\0&0\end{bmatrix}\;;
\]
\[
L_{3\rightarrow 4}=L_{4\rightarrow 3}:=I_3\;;\;\;L_{3\rightarrow 1}=L_{3\rightarrow 2}=L_{4\rightarrow 1}=L_{4\rightarrow 2}:=\begin{bmatrix} 1&0&0\\0&1&0\end{bmatrix}\;.
\]
With the parameters $L_1 = 100 \mu F$, $R_{L_1} = 0.01 \Omega$, $C_1 =100 \mu F$, $R_o = 2\Omega$, $R_{L_2} = 0.02\Omega$, $L_2 = 100\mu F$ we obtain the characteristic frequencies $\lambda_{1,1}=-5000$, $\lambda_{1,2}=-100$, $\lambda_{2,1}= -2550 +     j9695.2=\overline{\lambda_{2,2}}$, $\lambda_{3,1}= -2600 +     j9707.7=\overline{\lambda_{3,2}}$, $\lambda_{3,3}=-100$, $\lambda_{4,1}=-149.94 $, $\lambda_{4,2}= -2575 +     j 13933=\overline{\lambda_{4,3}}$. The $V$-matrices of Th. \ref{th:LMI4MLF}  are 
\[
V_1=\begin{bmatrix} 0 & 1 \\ 1 & 0 \end{bmatrix}\;;\;\;V_2=\begin{bmatrix} \displaystyle 0.70711   &  0.70711      \\  0.17324 -    j 0.68556  & \displaystyle 0.17324 +   j 0.68556 \end{bmatrix}\;,
\]
\[
V_3=\begin{bmatrix} 0 & 0 & 1 \\  0.16971 +    j 0.68644    &  0.16971 -    j 0.68644 &0 \\ 0.62564 -   j 0.32949&  0.62564 +   j 0.32949 & 0 \end{bmatrix}\;;
\]
\[
V_4=\begin{bmatrix} 0.70796 & 0.08739 +    j 0.49199  &0.08739 -    j 0.49199  \\  0.00353    & 0.70711   & 0.70711   \\ 0.70625   &   -0.08407 -    j 0.49323 &  -0.17147 +    j 0.98522 \end{bmatrix}\;.
\]
Using standard LMI solvers for the LMIs (\ref{eq:LMI4MLF1}), (\ref{eq:LMI4MLF2}) we obtain
\[
\overline{K}_1=\overline{K}_2=\begin{bmatrix}   0.00123 &    -0.00002 \\
    -0.00002  &     0.00112  \end{bmatrix}\;;\;\;\overline{K}_3=\overline{K}_4=\begin{bmatrix}  0.00123 &    -0.00002 & 0\\
    -0.00002  &      0.00112  & 0 \\ 0 & 0 &0.00121 \end{bmatrix}\;. 
\]
Applying Th. \ref{th:LMI4MLF} we conclude that $\{X_k(\zeta)^\top K_k X_k(\eta)\}_{k=1,\ldots,4}$ induces a MLF. 

To illustrate the modularity of our modelling framework, assume that the source converter can also be connected to yet another $RC$ load as depicted in Fig. \ref{boostRC}. 
\begin{figure}[htb!]
\begin{center}
\includegraphics[scale=0.5]{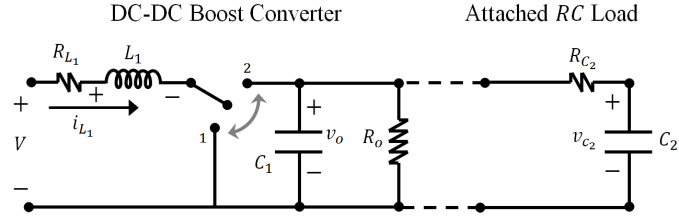}    
\caption{DC-DC Boost converter/$RC$-Circuit interconection.}
\label{boostRC}
\end{center}
\end{figure}
This results in two additional behaviours in $\mathcal{F}$, namely $\B_i=\ker~R_i\derb$, $i=5,6$, where 
\begin{eqnarray*}
R_5(\xi)&:=&\begin{bmatrix} L_1 \xi +R_{L_1}&0\\0&R_{C_2}C_1 C_2 \xi^2 +\left(\displaystyle\frac{R_{C_2}C_2}{R_o}+C_1+C_2\right)\xi+\displaystyle\frac{1}{R_o}\end{bmatrix}\\
R_6(\xi)&:=&\begin{bmatrix} L_1 \xi +R_{L_1}&1\\-R_{C_2}C_2 \xi - 1&R_{C_2}C_1 C_2 \xi^2 +\left(\displaystyle\frac{R_{C_2}C_2}{R_o}+C_1+C_2\right)\xi+\displaystyle\frac{1}{R_o}\end{bmatrix}\;.
\end{eqnarray*}
We choose as state maps for $\B_5$ and $\B_6$ 
\[
X_5(\xi):=\begin{bmatrix} 1&0\\0&1\\0 & R_{C_2}C_1 \xi + \displaystyle\frac{R_{C_2}}{R_o}+1\end{bmatrix}\;;\;\;X_6(\xi):=\begin{bmatrix} 1&0\\0&1\\-R_{C_2} & R_{C_2}C_1 \xi + \displaystyle\frac{R_{C_2}}{R_o}+1\end{bmatrix}\;, 
\]
corresponding to the re-initialisation maps
\[
L_{5\rightarrow 6}=L_{6\rightarrow 5}:=I_3\;;\;\;L_{1\rightarrow 5}=L_{1\rightarrow 6}=L_{2\rightarrow 5}=L_{2\rightarrow 6}:=\begin{bmatrix} 1&0\\0&1\\0&0\end{bmatrix}\;;
\]
\[
L_{5\rightarrow 1}=L_{5\rightarrow 2}=L_{6\rightarrow 1}=L_{6\rightarrow 2}:=\begin{bmatrix} 1&0&0\\0&1&0\end{bmatrix}\;.
\]
Given the values $R_{C_2}=1 \Omega$, $C_2=100 \mu F$, in order to compute a MLF for $\mathcal{F}:=\{\B_k\}_{k=1,\ldots,6}$ we only need to add two LMIs to those set up previously; the solution is
\[
\overline{K}_1=\overline{K}_2=\begin{bmatrix}  0.00127 &    -0.00002 \\
    -0.00002  &     0.00126  \end{bmatrix}\;;\;\;\overline{K}_3=\overline{K}_4=\begin{bmatrix}  0.00127 &    -0.00002 & 0\\
    -0.00002  &      0.00126  & 0 \\ 0 & 0 &0.00131 \end{bmatrix}\;;
\]
\[
\overline{K}_5=\overline{K}_6=\begin{bmatrix}   0.00127  &    -0.00002 & 0\\
    -0.00002 &      0.00126  & 0 \\ 0 & 0 &0.00382 \end{bmatrix}\;.
\]
\qed
\end{example}

\section{Positive-realness and Stability of Standard SLDS}\label{sec:standard}

The PLE's  resemblance to the dissipation equality (see App. \ref{sec:Dissip}) underlies the results of this section, aimed at connecting positive-realness and  stability of two-modes SLDS (see  \cite{shorten,circle}  in the classical setting).  We begin by recalling the definition of strict positive-real rational function (note that this definition  is not universally accepted; cf. \cite{tao}, Th. 2.1.). 
\begin{definition}\label{Def:PRM}\rm
$G\in\R^{\tt w \times w}(\xi)$ is \emph{strictly positive-real} if it is analytic in $\C_+$ and $G(-j \omega)^\top + G(j \omega) > 0 \,\, \forall \, \omega \, \in \, \mathbb{R}$. 
\end{definition}
We now relate  the PLE (\ref{eq:PLE}) with strict positive-realness of an associated transfer function.  
\begin{proposition}\label{prop:SPR&PLE}
Let $N,D\in\R^{\w\times\w}[\xi]$. Assume that $D$ and $N$ are Hurwitz, and that $ND^{-1}$ is strictly proper and strictly positive real. There exist $Q\in\R^{\bullet \times \w}[\xi]$ such that $D(-\xi)^\top N(\xi)+N(-\xi)^\top D(\xi)=Q(-\xi)^\top Q(\xi)$; moreover $\rank~\col(D(\lambda),Q(\lambda))=\w$ for all $\lambda\in\C$, and $QD^{-1}$ is strictly proper. Define $\Psi(\zeta,\eta):=\frac{D(\zeta)^\top N(\eta)+N(\zeta)^\top D(\eta)-Q(\zeta)^\top Q(\eta)}{\zeta+\eta}$. Then $\Psi(\zeta,\eta)$ is a $D$-canonical Lyapunov function for $\ker D\derb$, and $\Psi(\zeta,\eta) \mod N$ is a Lyapunov function for $\ker N\derb$. 
\end{proposition}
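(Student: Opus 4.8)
The plan is to verify each assertion in turn, leaning on Theorem~\ref{th:LyapfromQDF} and on the ``$\operatorname{mod}$'' machinery recalled in the well-posedness section. First I would establish the existence of $Q$. Since $ND^{-1}$ is strictly proper, strictly positive real, and analytic in $\C_+$, the rational function $D(-\xi)^\top N(\xi)+N(-\xi)^\top D(\xi)$, viewed on the imaginary axis, equals $D(-j\omega)^\top\big(N(j\omega)D(j\omega)^{-1}+\overline{N(j\omega)D(j\omega)^{-1}}^{\,\top}\big)D(j\omega)$ up to the positive-definite congruence by $D(j\omega)$, hence is positive semidefinite (in fact positive definite) for all $\omega\in\R$. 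This matrix polynomial is para-Hermitian, so by the (matrix) spectral factorisation theorem it admits a factorisation $Q(-\xi)^\top Q(\xi)$; choosing the Hurwitz spectral factor and observing the degree count ($\deg$ of the left side is strictly less than $2\deg D$ because $ND^{-1}$ is strictly proper) gives $\deg Q<\deg D$ componentwise, i.e. $QD^{-1}$ strictly proper. The rank condition $\operatorname{rank}\operatorname{col}(D(\lambda),Q(\lambda))=\w$ for all $\lambda\in\C$ is immediate from the fact that $D$ is Hurwitz (so $D(\lambda)$ is already nonsingular for $\operatorname{Re}\lambda\ge 0$) together with the spectral-factorisation identity: if $D(\lambda)v=0$ and $Q(\lambda)v=0$ for some $\lambda$ with $\operatorname{Re}\lambda<0$, plugging $\xi=-\overline\lambda$ (so $-\xi=\lambda$... ) into $D(-\xi)^\top N(\xi)+N(-\xi)^\top D(\xi)=Q(-\xi)^\top Q(\xi)$ forces $N(\lambda)v=0$, contradicting nonsingularity of $N$ (which is Hurwitz, hence $\det N(\lambda)\ne 0$ there). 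I would spell this contradiction out carefully since it is the one genuinely delicate algebraic point.

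Next, that $\Psi$ as defined is a polynomial (two-variable) symmetric matrix: the numerator $D(\zeta)^\top N(\eta)+N(\zeta)^\top D(\eta)-Q(\zeta)^\top Q(\eta)$ vanishes on $\zeta+\eta=0$ by the para-Hermitian identity just proved, so it is divisible by $\zeta+\eta$, giving $\Psi\in\R_s^{\w\times\w}[\zeta,\eta]$. Then $\Psi$, $Y:=N$, $Q$ satisfy the PLE~(\ref{eq:PLE}) verbatim. Since $\ker D\derb$ is asymptotically stable ($D$ Hurwitz) and $\operatorname{rank}\operatorname{col}(D(\lambda),Q(\lambda))=\w$ everywhere, Theorem~\ref{th:LyapfromQDF} yields $Q_\Psi\overset{\ker D(d/dt)}{>}0$, and $\frac{d}{dt}Q_\Psi\overset{\ker D(d/dt)}{=}-Q_Q\le 0$ with strict negativity off the origin by the same rank condition; hence $Q_\Psi$ is a Lyapunov function for $\ker D\derb$. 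That $\Psi$ is $D$-canonical follows because $Q$ is $D$-canonical (its degree drop makes $Q=Q\operatorname{mod}D$) and then Theorem~\ref{th:LyapfromQDF}'s canonicality clause propagates this to $Y$ and $\Psi$ — or I can take $Q$ $D$-canonical from the outset and invoke that clause directly.

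For the last claim I would reduce the PLE modulo $N$. Working on $\ker N\derb$, the operator $D(d/dt)$ acts as $Y'(d/dt):=D\operatorname{mod}N$ applied to trajectories, but more to the point: restricting identity~(\ref{eq:PLE}) to $w\in\ker N\derb$ kills the terms $Y(\zeta)^\top N(\eta)$ and $N(\zeta)^\top Y(\eta)$, leaving $(\zeta+\eta)\big(\Psi(\zeta,\eta)\operatorname{mod}N\big)=-\big(Q(\zeta)^\top Q(\eta)\big)\operatorname{mod}N$ as an identity of two-variable polynomials acting on $\ker N\derb$; symmetrically, writing the PLE for the pair $(N,D)$ with roles swapped — which is legitimate because $N$ is also Hurwitz and the para-Hermitian polynomial is the same — shows $\Psi\operatorname{mod}N$ satisfies a PLE for $\ker N\derb$ with right-hand side $-Q'(\zeta)^\top Q'(\eta)$ where $Q':=Q\operatorname{mod}N$. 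Since $N$ is Hurwitz, $\ker N\derb$ is asymptotically stable; and $\operatorname{rank}\operatorname{col}(N(\lambda),Q'(\lambda))=\w$ for all $\lambda$ with $\det N(\lambda)=0$ follows from the rank condition on $\operatorname{col}(D,Q)$ after a short argument (at such $\lambda$, $\det D(\lambda)\ne 0$ so $D(\lambda)$-congruence of the factorisation identity shows $Q(\lambda)v=0\Rightarrow N(\lambda)v=0$, i.e. $Q'$ and $Q$ agree on $\ker N(\lambda)$). Applying Theorem~\ref{th:LyapfromQDF} once more gives that $\Psi\operatorname{mod}N$ is a Lyapunov function for $\ker N\derb$.

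\textbf{Anticipated main obstacle.} The crux is the existence of the spectral factor $Q$ with the correct degree and the rank/positivity bookkeeping that ties together $D$-Hurwitz, $N$-Hurwitz, strict properness, and strict positive-realness; in particular, showing the para-Hermitian polynomial is positive \emph{definite} on the imaginary axis (needed for the strict rank condition and hence for $Q_\Psi>0$ rather than merely $\ge 0$) requires care about what ``strictly positive real'' delivers at $\omega=\infty$ versus finite $\omega$, and the congruence-by-$D(j\omega)$ step must be handled even where one worries about $\det D(j\omega)$ — but $D$ Hurwitz rules out imaginary-axis zeros, so this is safe. The ``reduction modulo $N$'' step is conceptually the other place to be careful, since one must justify that reducing a two-variable polynomial identity modulo a one-variable operator in \emph{both} arguments is compatible with the $(\zeta+\eta)$ factor and preserves the Lyapunov inequalities; this is exactly the content of the canonical-representative calculus from App.~\ref{sec:behbasics}, so I would cite it rather than reprove it.
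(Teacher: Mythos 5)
Your overall strategy coincides with the paper's: spectral factorisation of the para-Hermitian polynomial to obtain $Q$, Theorem~\ref{th:LyapfromQDF} to get the Lyapunov function on $\ker D\derb$, and reduction modulo $N$ plus a rank argument for $\ker N\derb$. However, there is a genuine gap in your justification of $\rank~\col(D(\lambda),Q(\lambda))=\w$. From $D(\lambda)v=Q(\lambda)v=0$ (necessarily $\mathrm{Re}\,\lambda<0$, since $D$ is Hurwitz) you correctly deduce $N(\lambda)v=0$, but then claim this contradicts $N$ being Hurwitz ``there''. It does not: Hurwitzness guarantees nonsingularity only on the \emph{closed right} half-plane, while $\lambda$ lies in the open left half-plane. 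The same confusion reappears when you assert $\det D(\lambda)\neq 0$ at roots of $\det N$. The condition is in fact not automatic: take $N(\xi)=\xi+1$, $D(\xi)=(\xi+1)(\xi+2)$, which satisfies every hypothesis; the para-Hermitian polynomial is $4(1-\xi^2)$, whose Hurwitz spectral factor $2(\xi+1)$ vanishes together with $D$ at $\lambda=-1$, so the rank condition fails for that choice, and one must assign the common zero directions of $N$ and $D$ to the anti-stable part of $Q$ (here $Q(\xi)=2(1-\xi)$ works). The paper buries this in ``standard arguments in polynomial spectral factorisation'', but your proposed argument is actively wrong rather than merely deferred. Note that wherever the paper genuinely exploits Hurwitzness it evaluates $D(-\xi)^\top N(\xi)+N(-\xi)^\top D(\xi)=Q(-\xi)^\top Q(\xi)$ at the \emph{reflected} point: for the rank condition on $\col(N(\lambda),Q(\lambda))$ it multiplies by $v$ with $N(\lambda)v=Q(\lambda)v=0$, obtains $N(-\lambda)^\top D(\lambda)v=0$, and uses nonsingularity of $N(-\lambda)$ because $\mathrm{Re}(-\lambda)>0$, whence $D(\lambda)v=0$ contradicts the already-established rank condition on $\col(D,Q)$. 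Both of your rank arguments should be reworked along these lines.

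A secondary issue: for nonnegativity of $\Psi \mod N$ along $\ker N\derb$, the paper does not re-derive a PLE modulo $N$. Your route of ``swapping the roles of $N$ and $D$ and applying Theorem~\ref{th:LyapfromQDF} again'' would require justifying that taking canonical representatives modulo $N$ in both arguments is compatible with multiplication by $\zeta+\eta$, which does not hold in general and is not supplied by the calculus in the appendix. The paper sidesteps this entirely: it invokes Prop.~4.10 of the QDF paper to conclude that the $D$-canonical $\Psi$ is \emph{globally} nonnegative as a QDF, and then simply restricts to $\ker N\derb$, using $Q_{\Psi\,\mathrm{mod}\,N}=Q_\Psi$ on that behaviour; the negativity of the derivative is then handled separately by the rank argument above. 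You should adopt that argument, or close the commutation gap explicitly.
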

\begin{proof}
See Appendix \ref{app:proofs}.
\end{proof}


Thus if $\Psi$ is a suitable storage function of the system with transfer function $ND^{-1}$, associated with a supply rate induced by $\begin{bmatrix} 0 & I \\ I & 0  \end{bmatrix}$ and with dissipation rate $Q(\zeta)^\top Q(\eta)$, then it is also a Lyapunov function for $\ker D\derb$ and (after the ``mod" operation) also for $\ker~N\derb$ (on  dissipativity and Lyapunov stability see also \cite{RK}). Remarkably, it turns out that such storage functions also induce a \emph{MLF} for a SLDS with modes $\ker N\derb$, $\ker D\derb$, and special gluing conditions, naturally associated with the ``$\mbox{\rm mod}$" operation. We now define such systems.

%

In the following, we consider SLDSs where ${\mathcal F} = \left(\ker R_1\left(\frac{d}{dt}\right), \ker R_2\left(\frac{d}{dt}\right) \right)$, with $R_j\in\R^{\tt w \times \tt w}[\xi]$, $j=1,2$ nonsingular. We assume that \emph{$R_2 R_1^{-1}$ is strictly proper}; this implies that the state space of $\ker R_2\derb$ is included in that of $\ker~R_1\derb$, as we presently show. 
\begin{lemma}\label{lemma:state}
Let $\mathfrak{B}_i=\ker R_i\left(\frac{d}{dt}\right)$, $i=1,2$. Assume that $R_1,R_2\in\R^{\tt w \times \tt w}[\xi]$ are nonsingular, and that $R_2 R_1^{-1}$ is strictly proper. Let $n_i:=\deg(\det(R_i))$; then $n_2<n_1$. There exist $X_1^\prime\in\R^{(n_1-n_2)\times\tt w}[\xi]$, $X_2\in\R^{n_2\times\tt w}[\xi]$ such that $X_2\derb$ is a  minimal state map for $\mathfrak{B}_2$, and 
\begin{equation}\label{stateB1}
X_1\derb:= \mbox{\rm col}\left(X_2\derb,X_1^\prime\derb\right)\; ,
\end{equation}
is a  minimal state map  for $\mathfrak{B}_1$. Moreover, there exists $\Pi\in\R^{(n_1-n_2)\times n_2}$ such that $X_1^\prime(\xi)~\mbox{\rm mod}~ R_2=\Pi X_2(\xi)$.
\end{lemma}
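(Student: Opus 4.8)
The plan is to exploit the structure of the kernel representations and the theory of state maps for autonomous linear differential behaviours summarised in the appendix (around 3.2.2 of \cite{yellow} and the state-map references \cite{statemaps}). First I would establish the degree inequality $n_2 < n_1$: since $R_2 R_1^{-1}$ is strictly proper and $R_1, R_2$ are nonsingular, the rational matrix $R_2 R_1^{-1}$ tends to $0$ as $\xi \to \infty$, so $\det(R_2 R_1^{-1}) = \det R_2 / \det R_1$ is a strictly proper scalar rational function, forcing $\deg \det R_2 < \deg \det R_1$, i.e. $n_2 < n_1$. (One must be slightly careful: strict properness of a matrix does not in general force its determinant to be strictly proper, but here the rows of $R_2 R_1^{-1}$ each go to zero, and an argument via the Smith--McMillan form or via the fact that $R_2 R_1^{-1}$ proper and strictly proper implies the McMillan degree bound on $\det$ gives the claim; alternatively, strict properness of $R_2R_1^{-1}$ means $R_2 = R_1 \cdot (\text{strictly proper})$, and comparing highest-degree coefficients of $\det$ yields the inequality.)

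Next I would build the state maps. The key fact is that $\B_2 = \ker R_2\derb$ and $\B_1 = \ker R_1\derb$ are autonomous and stable-candidate behaviours whose state dimensions are exactly $n_2$ and $n_1$. Since $R_2 R_1^{-1}$ strictly proper implies $\B_1 \subseteq$ (in an appropriate sense) has more ``memory'', I want to show the state of $\B_2$ can be taken as a sub-block of the state of $\B_1$. Concretely, choose any minimal state map $X_2\derb$ for $\B_2$, with $X_2 \in \R^{n_2 \times \w}[\xi]$. I would then show that the rows of $X_2$, regarded as polynomial differential operators acting on $\B_1$, remain $\R^{n_1}$-valued state variables for $\B_1$ and are linearly independent modulo $R_1$; this uses that $R_2$ is a left-multiple-type relation compatible with $R_1$ because $R_2 R_1^{-1}$ is (strictly) proper, hence $R_2 = P R_1$ for some proper rational $P$, so any equation valid on $\ker R_1\derb$ is inherited appropriately. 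Then I extend $\{$rows of $X_2\}$ to a full minimal state map for $\B_1$ by adjoining $n_1 - n_2$ further rows $X_1'\derb$, obtaining $X_1\derb = \col(X_2\derb, X_1'\derb)$ as in \eqref{stateB1}; this extension is possible precisely because the span of state functionals of $\B_1$ has dimension $n_1 > n_2$ and the $X_2$-rows are independent there.

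For the final assertion, about $X_1'(\xi) \bmod R_2 = \Pi X_2(\xi)$: recall from the appendix that $X_2\derb$ being a \emph{minimal state map} for $\B_2$ means every $\R$-valued functional $v \mapsto G\derb v$ on $\B_2$ with $G$ an $R_2$-canonical operator is a constant linear combination of the components of $X_2\derb$ — equivalently, the $R_2$-canonical representatives span an $n_2$-dimensional space coordinatised by $X_2$. Since $X_1'\derb$ maps $\B_1$ into $\R^{n_1 - n_2}$, its restriction to trajectories that happen to lie in $\B_2$ (or more precisely the operator $X_1'\derb$ reduced modulo $R_2$) is an $R_2$-canonical operator of size $(n_1-n_2)\times\w$, hence must be expressible as $\Pi X_2(\xi)$ for a unique constant $\Pi \in \R^{(n_1-n_2)\times n_2}$. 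This is just the universal property of minimal state maps applied row-by-row.

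The main obstacle I anticipate is the rigorous justification that a minimal state map of $\B_2$ can be taken as a \emph{sub-block} of a minimal state map of $\B_1$ — i.e. the compatibility of state representations under the strict-properness hypothesis. The degree count and the ``mod $R_2$'' identity are then relatively routine consequences of the defining (universal) property of minimal state maps recalled in Appendix~\ref{app:backmater:statemaps}. I would handle the sub-block claim by working with a shift-realisation / observable state-space realisation of $\ker R_1\derb$ in which the invariant subspace corresponding to the ``slow'' part $\det R_2$ of $\det R_1$ is explicitly exhibited (using that $R_2 R_1^{-1}$ strictly proper pins down how the $n_2$ roots of $\det R_2$ sit among the $n_1$ roots of $\det R_1$, counting multiplicities under the multiplicity assumption used elsewhere in the paper), and then reading off $X_2$ and $X_1'$ from a block-triangular coordinatisation of that realisation.
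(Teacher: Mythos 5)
Your treatment of $n_2<n_1$ and of the existence of $\Pi$ is essentially fine (for the latter, each row of $X_1^\prime\bmod R_2$ is by construction $R_2$-canonical, i.e.\ lies in the state space of $\B_2$, hence is a constant combination of the rows of the basis $X_2$ — exactly the paper's argument; and your hedge about determinants is unneeded, since every entry of $R_2R_1^{-1}$ vanishing at infinity forces $\det R_2/\det R_1$ to vanish there too). The problem is the central step, which you yourself flag as the main obstacle: that a minimal state map of $\B_2$ can be taken as a sub-block of one for $\B_1$. Your proposed rigorous route — exhibiting ``the invariant subspace corresponding to the slow part $\det R_2$ of $\det R_1$'' on the grounds that strict properness of $R_2R_1^{-1}$ ``pins down how the $n_2$ roots of $\det R_2$ sit among the $n_1$ roots of $\det R_1$'' — rests on a false premise. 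Strict properness of $R_2R_1^{-1}$ imposes no relation between the zero sets of $\det R_1$ and $\det R_2$: already for $\w=1$, $R_1(\xi)=\xi^2+3\xi+2$ and $R_2(\xi)=\xi+5$ give a strictly proper quotient with disjoint root sets, and $\B_2$ is in no sense a subbehaviour of $\B_1$. There is no invariant subspace of a realisation of $\B_1$ ``corresponding to $\det R_2$'' from which to read off $X_2$, so that plan fails.

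The inclusion you actually need is not between behaviours or spectra but between the state spaces regarded as spaces of polynomial row vectors. With $\mathfrak{X}_i:=\{f\in\R^{1\times\w}[\xi]\mid fR_i^{-1}\ \text{strictly proper}\}$ (the characterisation recalled in the appendix on state maps), one has $\mathfrak{X}_2\subseteq\mathfrak{X}_1$ simply because $fR_1^{-1}=\left(fR_2^{-1}\right)\left(R_2R_1^{-1}\right)$ is a product of strictly proper rational matrices, hence strictly proper. A basis of $\mathfrak{X}_2$ is then a linearly independent subset of $\mathfrak{X}_1$ and is completed to a basis by $n_1-n_2$ further rows $X_1^\prime$; that is the entire content of the sub-block claim, and it is how the paper proves it. Your intermediate remark that ``$R_2=PR_1$ for some proper $P$, so any equation valid on $\ker R_1\derb$ is inherited'' gestures at the right identity but draws the wrong conclusion from it: the point is the strict-properness characterisation of the state space, not inheritance of equations between the kernels.
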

\begin{IEEEproof}See Appendix \ref{app:proofs}.
\end{IEEEproof}
\begin{example}\label{ex:scalarcase}\rm
If $\w=1$, $R_2R_1^{-1}$ is strictly proper iff $n_1=\deg(R_1)>\deg(R_2)=n_2$.  A  state map for $\B_1$ is $\col(\xi^k)_{k=0,\ldots,n_1-1}$, whose first $n_2$ elements form a  basis for the state space of  $\B_2$. The rows of $\Pi$ consist of the coefficients of the polynomials $\xi^k~ \mbox{\rm mod}~ R_2(\xi)$, $k=n_2,\ldots,n_1-1$. \qed
\end{example}
In the rest of this section we consider \emph{standard SLDS}, defined as follows. 
\begin{definition}\label{def:standardSLDS}\rm
Let $\Sigma=\{\mathcal{P},\mathcal{F},\mathcal{S},\mathcal{G}\}$  be a SLDS with ${\mathcal F} =  \left(\ker R_1\left(\frac{d}{dt}\right), \ker R_2\left(\frac{d}{dt}\right) \right)$, where  $R_j\in\R^{\tt w \times \tt w}[\xi]$ is nonsingular, $j=1,2$. Assume that  $R_2 R_1^{-1}$ is strictly proper. Let $n_j:=\mbox{\rm deg}(\mbox{\rm det}(R_j))$, $j=1,2$, and let $X_1^\prime\in\R^{(n_1-n_2)\times\tt w}[\xi]$, $X_2\in\R^{n_2\times\tt w}[\xi]$ and $\Pi\in\R^{(n_1-n_2)\times n_2}$ be as in Lemma \ref{lemma:state}. $\Sigma$ is a \emph{standard SLDS} if the gluing conditions are $\left({G}_{2 \rightarrow 1}^-(\xi),{G}_{2 \rightarrow 1}^+(\xi)\right):=\left(\col(
X_2(\xi),
\Pi X_2(\xi)), \col(
X_2(\xi),
X_1^\prime(\xi))
\right)$ and $\left({G}_{1 \rightarrow 2}^-(\xi),{G}_{1 \rightarrow 2}^+(\xi)\right):=\left(X_2(\xi), X_2(\xi)\right)$. 
\end{definition}
It is straightforward  to check that the gluing conditions of a standard SLDS are well-posed.   
\begin{remark}\rm 
In a standard SLDS, the state space of $\B_2$ is contained in that of $\B_1$; however, at any time the state used for the description of the system is that of the active dynamics, and not a global one.
\end{remark}
\begin{example}\rm
 Assume that $R_1$ and $R_2$ in Ex. \ref{ex:scalarcase} are monic, and  that $n_1=n_2+1$. Denote $R_2(\xi)=:\sum_{j=0}^{n_1-1} R_{2,j} \xi^j$, and define $S(\xi):=\begin{bmatrix} 1&\ldots&\xi^{n_1-2} \end{bmatrix}^\top$. The gluing conditions of the standard SLDS are $\left(G_{2\rightarrow 1}^-(\xi), G_{2\rightarrow 1}^+(\xi)\right)=\left(\col(S(\xi), -\sum_{j=0}^{n_1-2} R_{2,j} \xi^j  ),\col(S(\xi),\xi^{n_1-1})  \right)$ and $\left(G_{1\rightarrow 2}^-(\xi), G_{1\rightarrow 2}^+(\xi)\right)=
  \left(S(\xi),S(\xi)\right)$. In a switch $\B_2\rightarrow\B_1$, to obtain ``initial conditions''  uniquely specifying $w\in\B_1$, we need to define the  value of $\frac{d^{n_1-1}}{dt^{n_1-1}}w$ after the switch. Standard gluing conditions stipulate that it coincides with $\frac{d^{n_1-1}}{dt^{n_1-1}}w=-\sum_{i=0}^{n_1-2} R_{2,i}\frac{d^{i}}{dt^{i}}w$, since before the switch $w\in\B_2$. In a switch  $\B_1\rightarrow\B_2$, we project the vector of derivatives characteristic of  $w\in\B_1$ down onto the shorter vector of derivatives of $w\in\B_2$.   \qed 
\end{example}

We now prove that a standard SLDS where $R_2R_1^{-1}$ is strictly positive real admits a multiple Lyapunov function induced by $\{\Psi_1,\Psi_2\}$ where $\Psi_1$ is a storage function for $R_2R_1^{-1}$, and $\Psi_2=\Psi_1 \mbox{\rm ~mod}~ R_2$. This is the multivariable generalisation of some results presented in \cite{cdcpaper}, \cite{mtns12}. 
\begin{theorem}\label{th_01}
Let $\Sigma$ be a standard SLDS (see Def. \ref{def:standardSLDS}), with $R_1$ and $R_2$ Hurwitz. Assume that $R_2R_1^{-1}$ is  strictly proper and strictly positive-real. Define $\Phi(\zeta,\eta):= R_1(\zeta)^\top R_2(\eta) + R_2(\zeta)^\top R_1(\eta)$. There exists $Q\in\R^{\bullet \times \w}[\xi]$ such that $\Phi(-\xi,\xi)=Q(-\xi)^\top Q(\xi)$, $\mbox{\rm rank}~\col(R_1(\lambda),Q(\lambda))=\w$ for all $\lambda\in\C$ and $QR_1^{-1}$ is strictly proper. Define 
\begin{equation}\label{polystorage2}
\Psi_1(\zeta,\eta):=\!\frac{\Phi(\zeta,\eta) -Q(\zeta)^\top Q(\eta)}{\zeta + \eta}\;.
\end{equation}
Then $\Psi_1$ is $R_1$-canonical. Moreover, define $\Psi_2:=\Psi_1~ \mbox{\rm mod}~ R_2$; then  $\{\Psi_1, \Psi_2\}$ induces a multiple Lyapunov function for $\Sigma$.
\end{theorem}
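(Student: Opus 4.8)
The plan is to verify the three conditions of Theorem~\ref{th:MLFSLDS} for the pair $\{Q_{\Psi_1},Q_{\Psi_2}\}$. First, the existence of $Q$ with $\Phi(-\xi,\xi)=Q(-\xi)^\top Q(\xi)$ and $QR_1^{-1}$ strictly proper follows from Proposition~\ref{prop:SPR&PLE} applied with $D=R_1$, $N=R_2$ (noting that $\Phi(-\xi,\xi)=R_1(-\xi)^\top R_2(\xi)+R_2(-\xi)^\top R_1(\xi)$ is para-Hermitian and nonnegative on the imaginary axis by strict positive-realness): this yields $Q$, the rank condition $\mathrm{rank}~\col(R_1(\lambda),Q(\lambda))=\w$ for all $\lambda\in\C$, and the fact that $\Psi_1$ as defined in~(\ref{polystorage2}) is the $R_1$-canonical solution of the PLE~(\ref{eq:PLE}) with $R=R_1$, $Y=R_2$. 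By Theorem~\ref{th:LyapfromQDF} this gives $Q_{\Psi_1}\overset{\B_1}{>}0$ and $\frac{d}{dt}Q_{\Psi_1}\overset{\B_1}{<}0$, so conditions~1 and~2 of Theorem~\ref{th:MLFSLDS} hold for $i=1$.

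For $i=2$, the claim is that $\Psi_2:=\Psi_1\bmod R_2$ is a Lyapunov function for $\B_2=\ker R_2\derb$. The route is to take the PLE~(\ref{eq:PLE}) for $\Psi_1$ and reduce it modulo $R_2$. Since $R_2 R_1^{-1}$ is strictly proper, along $\B_2$ every trajectory satisfies $R_2\derb w=0$, hence $R_1\derb w$ can be re-expressed; more precisely one shows that $\Psi_1\bmod R_2$ satisfies a PLE of the form $(\zeta+\eta)\Psi_2(\zeta,\eta)=Y_2(\zeta)^\top R_2(\eta)+R_2(\zeta)^\top Y_2(\eta)-Q_2(\zeta)^\top Q_2(\eta)$ for suitable $Y_2,Q_2$ obtained by reducing $R_1$ and $Q$ modulo $R_2$, using the uniqueness of canonical representatives (App.~\ref{sec:behbasics}). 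Positivity $Q_{\Psi_2}\overset{\B_2}{\geq}0$ is inherited from $Q_{\Psi_1}\geq 0$ because the state map $X_2$ of $\B_2$ is (by Lemma~\ref{lemma:state}) a sub-block of the state map $X_1$ of $\B_1$: writing $\Psi_1(\zeta,\eta)=X_1(\zeta)^\top\overline{K}_1 X_1(\eta)$ as in Proposition~\ref{prop:PLEviaLMI}, the reduction mod $R_2$ corresponds to restricting $\overline{K}_1$ to the principal block indexed by the $X_2$-coordinates together with the contribution of $\Pi$, and this restriction stays positive semidefinite. The strict decrease $\frac{d}{dt}Q_{\Psi_2}\overset{\B_2}{<}0$ then follows from the reduced PLE together with the rank condition, again via Theorem~\ref{th:LyapfromQDF}.

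The main obstacle is condition~3 of Theorem~\ref{th:MLFSLDS}: showing that at every switching instant the value of the multiple functional does not increase. There are two transition types. For $\B_1\to\B_2$, the standard gluing conditions are $(X_2(\xi),X_2(\xi))$, so $X_2\derb w$ is continuous across the switch; since $\Psi_2=\Psi_1\bmod R_2$ and this ``mod'' operation is exactly $\overline{K}_1$ restricted to the $X_2$-block (plus the $\Pi$-term), one gets $Q_{\Psi_1}(w)(t_j^-)-Q_{\Psi_2}(w)(t_j^+)$ equal to a quadratic form in the ``extra'' state components $X_1^\prime\derb w(t_j^-)$ whose sign must be shown to be $\geq 0$; this is where strict positive-realness and the precise form of $\Psi_1$ enter — the surplus term should be a Schur-complement-type expression that is nonnegative because $\overline{K}_1>0$. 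For $\B_2\to\B_1$, the gluing conditions $(\col(X_2,\Pi X_2),\col(X_2,X_1^\prime))$ force $X_1^\prime\derb w(t_j^+)=\Pi X_2\derb w(t_j^-)$, i.e. the re-initialisation map $L_{2\to1}$ sends $v_2\mapsto\col(v_2,\Pi v_2)$; the required inequality $Q_{\Psi_2}(w)(t_j^-)\geq Q_{\Psi_1}(w)(t_j^+)$ becomes $X_2(\cdot)^\top\overline{K}_2 X_2(\cdot)\geq (\,\col(X_2,\Pi X_2)\,)^\top\overline{K}_1(\,\col(X_2,\Pi X_2)\,)$ evaluated on the final state of $\B_2$, which by the definition of $\Psi_2$ as $\Psi_1\bmod R_2$ and of $\Pi$ as $X_1^\prime\bmod R_2=\Pi X_2$ should hold with \emph{equality}. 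I expect both transition inequalities to reduce, after substituting the explicit $\Psi_1$ from~(\ref{polystorage2}) and using $\Phi(-\xi,\xi)=Q(-\xi)^\top Q(\xi)$, to identities or to manifestly nonnegative quadratic forms; assembling these three verified conditions and invoking Theorem~\ref{th:MLFSLDS} completes the proof.
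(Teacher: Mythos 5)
Your overall strategy (verify the three conditions of Theorem~\ref{th:MLFSLDS}, using Prop.~\ref{prop:SPR&PLE} for conditions 1--2 and the block structure of the coefficient matrix of $\Psi_1$ for condition 3) is the same as the paper's, and your handling of the existence of $Q$, of $Q_{\Psi_1}$, of $Q_{\Psi_2}=Q_{\Psi_1 \bmod R_2}$, and of the $\B_2\to\B_1$ transition (equality of the functional across the switch) is correct. But there is a genuine gap at the step you yourself flag as the "main obstacle," and the mechanism you propose for closing it would not work. Writing $\widetilde{\Psi}_1=\begin{bmatrix}\Psi_{11}&\Psi_{12}\\ \Psi_{12}^\top&\Psi_{22}\end{bmatrix}$, the surplus at a $\B_1\to\B_2$ switch is the quadratic form induced by $\widetilde{\Psi}_1-\begin{bmatrix}I&\Pi^\top\\0&0\end{bmatrix}\widetilde{\Psi}_1\begin{bmatrix}I&0\\ \Pi&0\end{bmatrix}$, and the Schur-complement computation gives that this matrix is $\geq 0$ \emph{if and only if} $\Psi_{12}^\top+\Psi_{22}\Pi=0$: the relevant Schur complement is $-(\Psi_{12}+\Pi^\top\Psi_{22})\Psi_{22}^{-1}(\Psi_{12}^\top+\Psi_{22}\Pi)$, which is negative semidefinite whenever the cross term is nonzero. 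So positivity of $\overline{K}_1$ works \emph{against} you, not for you; the inequality holds only because the cross term vanishes identically, and that is an exact structural identity, not a consequence of $\overline{K}_1>0$.

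Establishing $\Psi_{12}^\top+\Psi_{22}\Pi=0$ is the real content of condition 3 and requires the following chain, absent from your proposal: (i) from the PLE and the state-map recursion $\xi X_1(\xi)=A_1X_1(\xi)+F_1R_1(\xi)$ one obtains the identity $R_2(\xi)=K^\top\bigl(\Psi_{12}^\top X_2(\xi)+\Psi_{22}X_1^\prime(\xi)\bigr)$ with $K:=\lim_{\xi\rightarrow\infty}\xi X_1^\prime(\xi)R_1(\xi)^{-1}$; (ii) one must show $\deg\det R_1-\deg\det R_2=\w$, so that $K$ is square and nonsingular (this itself needs a column-reducedness argument for the multivariable case); (iii) reducing the identity modulo $R_2$ gives $0=K^\top(\Psi_{12}^\top+\Psi_{22}\Pi)X_2(\xi)$, and linear independence of the rows of the minimal state map $X_2$ together with nonsingularity of $K$ forces $\Psi_{12}^\top+\Psi_{22}\Pi=0$. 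This is precisely where strict positive-realness and the specific choice of $\Psi_1$ as the storage function from~(\ref{polystorage2}) enter the jump condition; without it the claimed "manifestly nonnegative quadratic form" is not established.
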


\begin{IEEEproof}
See Appendix \ref{app:proofs}.
\end{IEEEproof}

Th. \ref{th_01} yields two approaches to computing a MLF for a standard SLDS. The first  is algebraic and consists of a  polynomial spectral factorisation and the computation of $\Psi_1$ from   (\ref{polystorage2}). The second, based on LMIs,  arises from the proof of Th. \ref{th_01}. We state it in the following result. 
\begin{corollary}
Let $X(\xi)$ be a minimal state map for $\B_1$ as in Lemma \ref{lemma:state}, and denote by $\widetilde{R}_1$ the coefficient matrix of $R_1(\xi)$.  Under the assumptions of Th. \ref{th_01}, there exist $\overline{Y}\in\R^{\w \times n_1}$, $\Psi_{11}\in\R^{n_2 \times n_2}$, and $\Psi_{22}\in\R^{(n_1-n_2) \times (n_1-n_2)}$ such that $\widetilde{\Psi}_1:=
\begin{bmatrix}
\Psi_{11} & -\Pi^\top \Psi_{22}\\
 -\Psi_{22} \Pi & \Psi_{22}
\end{bmatrix}>0$ satisfies the LMI
\[
\begin{bmatrix} 0_{\w \times n_1}\\ \widetilde{X}^\top
\end{bmatrix} \widetilde{\Psi}_1\begin{bmatrix}\widetilde{X}& 0_{n_1 \times \w}
\end{bmatrix}+\begin{bmatrix} \widetilde{X}^\top\\0_{\w \times n_1}
\end{bmatrix} \widetilde{\Psi}_1 \begin{bmatrix}  0_{n \times \w}&\widetilde{X}\end{bmatrix}-\begin{bmatrix} \widetilde{X}^\top\\0_{\w \times n_1}
\end{bmatrix}\overline{Y}^\top \widetilde{R}_1- \widetilde{R}_1^\top  \overline{Y} \begin{bmatrix}  \widetilde{X}&0_{n_1 \times \w}\end{bmatrix}\leq 0\; . 
\]
Then $X(\zeta)^\top \widetilde{\Psi}_1 X(\eta)$ and $X_2(\zeta)^\top \left(\Psi_{11}- \Pi^\top\Psi_{22}\Pi\right)X_2(\eta)$ induce a MLF for $\Sigma$. 
\end{corollary}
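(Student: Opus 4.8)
The plan is to read the Corollary off the proof of Theorem~\ref{th_01}, passing to coefficient matrices via Proposition~\ref{prop:PLEviaLMI}. First I would invoke Theorem~\ref{th_01}: under its hypotheses there is $Q\in\R^{\bullet\times\w}[\xi]$ with $\Phi(-\xi,\xi)=Q(-\xi)^\top Q(\xi)$, $\rank~\col(R_1(\lambda),Q(\lambda))=\w$ for all $\lambda\in\C$, $QR_1^{-1}$ strictly proper, and the $R_1$-canonical $\Psi_1$ of (\ref{polystorage2}), together with $\Psi_2:=\Psi_1\,\mbox{\rm mod}\,R_2$, induces a MLF for $\Sigma$. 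Rewriting (\ref{polystorage2}) as $(\zeta+\eta)\Psi_1(\zeta,\eta)=R_2(\zeta)^\top R_1(\eta)+R_1(\zeta)^\top R_2(\eta)-Q(\zeta)^\top Q(\eta)$ shows that $(\Psi_1,R_2,Q)$ solves the PLE (\ref{eq:PLE}) with $R=R_1$; since $R_2R_1^{-1}$ and $QR_1^{-1}$ are strictly proper, both $R_2$ and $Q$ are $R_1$-canonical, so Proposition~\ref{prop:PLEviaLMI} applies with $Y=R_2$ and the minimal state map $X=X_1=\col(X_2,X_1')$ of Lemma~\ref{lemma:state}. This yields a symmetric $\overline{K}$, an $\overline{Y}\in\R^{\w\times n_1}$ and an $\overline{Q}$ with $\Psi_1(\zeta,\eta)=X(\zeta)^\top\overline{K}X(\eta)$, $R_2(\xi)=\overline{Y}X(\xi)$, and the matrix identity (\ref{eq:PLEviaLMI}); deleting its $\overline{Q}^\top\overline{Q}$-term, the remaining matrix is exactly the one in the displayed LMI and equals $-\begin{bmatrix}\widetilde{X}^\top\\0_{\w\times n_1}\end{bmatrix}\overline{Q}^\top\overline{Q}\begin{bmatrix}\widetilde{X}&0_{n_1\times\w}\end{bmatrix}\le 0$, so the LMI holds. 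The rank condition together with the last clause of Proposition~\ref{prop:PLEviaLMI} gives $\overline{K}>0$.

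The crux is to show that $\overline{K}$, partitioned conformally with $\col(X_2,X_1')$ as $\overline{K}=\begin{bmatrix}K_{11}&K_{12}\\K_{12}^\top&K_{22}\end{bmatrix}$, satisfies $K_{12}=-\Pi^\top K_{22}$; one then sets $\Psi_{11}:=K_{11}$, $\Psi_{22}:=K_{22}$, giving $\widetilde{\Psi}_1=\overline{K}>0$. By Lemma~\ref{lemma:state}, $X_1'(\xi)\,\mbox{\rm mod}\,R_2=\Pi X_2(\xi)$, hence $X(\xi)\,\mbox{\rm mod}\,R_2=\col(I_{n_2},\Pi)X_2(\xi)$, and therefore $\Psi_2(\zeta,\eta)=X_2(\zeta)^\top M X_2(\eta)$ with $M:=K_{11}+K_{12}\Pi+\Pi^\top K_{12}^\top+\Pi^\top K_{22}\Pi$ (a constant matrix times the $R_2$-canonical $X_2$ is again $R_2$-canonical). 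For a standard SLDS, the gluing conditions $(G_{1\rightarrow2}^-,G_{1\rightarrow2}^+)=(X_2,X_2)$ give, as in section~\ref{sec:wellpos}, the re-initialisation map $L_{1\rightarrow2}=[\,I_{n_2}\ \ 0\,]$. Condition~3 of Theorem~\ref{th:MLFSLDS}, which holds for the MLF $\{\Psi_1,\Psi_2\}$, then reads at a $\B_1\rightarrow\B_2$ switch $v_1^\top\overline{K}v_1\ge([I\ 0]v_1)^\top M([I\ 0]v_1)$ for every $v_1\in\R^{n_1}$ — each such $v_1$ being realized as $X_1\derb w(t^-)$ for some $\B_1$-trajectory $w$ switched into $\B_2$, because $X_1$ is a minimal state map. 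Writing $v_1=\col(a,b)$ and completing the square in $c:=b-\Pi a$, the inequality collapses to $c^\top K_{22}c+2c^\top(K_{12}^\top+K_{22}\Pi)a\ge 0$ for all $a,c$, which forces the cross term to vanish, i.e. $K_{12}^\top+K_{22}\Pi=0$; substituting back, $M=K_{11}-\Pi^\top K_{22}\Pi=\Psi_{11}-\Pi^\top\Psi_{22}\Pi$.

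It then remains to note that $X(\zeta)^\top\widetilde{\Psi}_1 X(\eta)=\Psi_1(\zeta,\eta)$ and $X_2(\zeta)^\top(\Psi_{11}-\Pi^\top\Psi_{22}\Pi)X_2(\eta)=\Psi_2(\zeta,\eta)$, so the exhibited pair of QDFs is $\{\Psi_1,\Psi_2\}$, a MLF for $\Sigma$ by Theorem~\ref{th_01}. The main obstacle is the block-structure step of the second paragraph; I expect the cleanest route to be exactly the one above, namely extracting $K_{12}=-\Pi^\top K_{22}$ from the non-increase condition at a $\B_1\rightarrow\B_2$ transition combined with the explicit standard re-initialisation map. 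The points demanding care are that the final state of a $\B_1$-segment ranges over all of $\R^{n_1}$, and that the strict properness of $R_2R_1^{-1}$ and of $QR_1^{-1}$ genuinely makes $R_2$ and $Q$ $R_1$-canonical, so that Proposition~\ref{prop:PLEviaLMI} may be invoked with $Y=R_2$ without any preliminary reduction.
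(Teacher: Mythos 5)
Your proposal is correct, but the way you obtain the block structure $\Psi_{12}=-\Pi^\top\Psi_{22}$ is genuinely different from the route the paper intends. The first part of your argument (invoking Th.~\ref{th_01} for $Q$ and $\Psi_1$, noting that strict properness of $R_2R_1^{-1}$ and $QR_1^{-1}$ makes $R_2$ and $Q$ $R_1$-canonical so that Prop.~\ref{prop:PLEviaLMI} applies with $Y=R_2$, dropping the $\overline{Q}^\top\overline{Q}$ term to get the inequality, and using the rank condition for $\overline{K}>0$) coincides with the paper's. For the crux, the paper reads the structure directly off the \emph{proof} of Th.~\ref{th_01}: there, Lemma~\ref{lemma_R2} gives the algebraic identity $R_2(\xi)=K^\top(\Psi_{12}^\top X_2(\xi)+\Psi_{22}X_1^\prime(\xi))$, reducing it modulo $R_2$ kills the left-hand side, and linear independence of the rows of $X_2$ together with nonsingularity of $K$ (Lemma~\ref{lemma:n1-n2}) forces $\Psi_{12}^\top+\Psi_{22}\Pi=0$; Lemma~\ref{lemma:fundstructure} is then used only in its ``if'' direction to deduce the non-increase at switching instants. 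You instead take the MLF conclusion of Th.~\ref{th_01} as a black box, note that condition 3 of Th.~\ref{th:MLFSLDS} at a $\B_1\rightarrow\B_2$ switch reads $v_1^\top\overline{K}v_1\geq([I\ 0]v_1)^\top M([I\ 0]v_1)$ for all $v_1$ (using surjectivity of the minimal state map and the standard re-initialisation $L_{1\rightarrow 2}=[I_{n_2}\ 0]$), and run Lemma~\ref{lemma:fundstructure} in its ``only if'' direction via completing the square in $c=b-\Pi a$. This is precisely the maneuver the paper itself performs later, in the proof of Th.~\ref{th_comp}, so it is sound; what it buys is independence from the explicit computations of Lemmas~\ref{lemma_R2} and~\ref{lemma:n1-n2}, at the price of being less self-contained (you need the full strength of the MLF conclusion, including realizability of every $v_1\in\R^{n_1}$ as a pre-switch state, rather than a purely algebraic identity). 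Either way the final identification $M=\Psi_{11}-\Pi^\top\Psi_{22}\Pi$ and the conclusion are correct.
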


\begin{remark}\rm
If $\w=1$ the proof of Th. \ref{th_01} simplifies considerably; see  \cite{mtns12} for details. \qed \end{remark}

\begin{remark}\rm 
Theorem \ref{th_01} holds  also if $R_2 R_1^{-1}$ is \emph{bi-proper}, i.e. proper and with a proper inverse; note that in this case the state spaces of $\B_1$ and of $\B_2$ coincide. Let $X\in\R^{\bullet \times \bullet}[\xi]$ be a state map for $\B_1$; the standard gluing conditions are $(G_{1 \rightarrow 2}^-(\xi),G_{1 \rightarrow 2}^+(\xi))=(X(\xi),X(\xi))=(G_{2 \rightarrow 1}^-(\xi),G_{2 \rightarrow 1}^+(\xi))$. It is  straightforward  to check that e.g. the largest storage function for $R_2 R_1^{-1}$  yields a  MLF. For $\w=1$ this  is shown in \cite{cdcpaper}. \qed \end{remark}

\begin{remark}\rm 
In the state-space framework it is well-known that if the open-loop transfer function of a system is positive-real, then all closed-loop systems obtained from it  by state feedback share a  common quadratic Lyapunov function (see  sect. 2.3.2 of \cite{liberzon} and \cite{shorten,circle}). Th. \ref{th_01} offers a new perspective on the relation between positive-realness and stability: in our framework, the different dynamical regimes do not arise from closing the loop around some fixed plant, and  positive-realness arises from  the interplay of the mode dynamics. \qed \end{remark}

\begin{remark}\rm 
Theorem \ref{th_01}  can also be used to compute from a given  Hurwitz matrix $R_1$, some  matrix $R_2$ such that the SLDS with  modes $\ker~R_i\derb$, $i=1,2$ and standard gluing conditions is asymptotically stable. Namely, select $Q\in\mathbb{R}^{\bullet \times \w}[\xi]$ such that  $\mbox{\rm rank}~\col(R_1(\lambda),Q(\lambda))=\w$ for all $\lambda\in\mathbb{C}$ and $QR_1^{-1}$ is strictly proper; solve the PLE for $R_2$. Then the standard SLDS with behaviours $\ker~R_i\derb$, $i=1,2$ is stable.  \qed 
\end{remark}
Finally, we discuss the concept of positive-real completion, defined as follows. 
\begin{definition}\label{def_comp}\rm
Let $R_i\in\R^{\w\times\w}[\xi]$, $i=1,2$ be nonsingular and $R_2 R_1^{-1}$ strictly proper. $M\in\R^{\tt w \times w}[\xi]$ is a \emph{strictly positive-real completion} of $R_2R_1^{-1}$ if $MR_2R_1^{-1}$ is strictly positive-real.
\end{definition}


The following result shows that if a MLF exists, then a positive-real completion can be found. 
\begin{theorem}\label{th_comp}
Let $\Sigma$  be a standard SLDS (see Def. \ref{def:standardSLDS}). 
If $\{\Psi_1,\Psi_1\mod R_2\}$ induces a MLF for $\Sigma$ such that $(\zeta+\eta)\Psi_1(\zeta,\eta)~\mbox{\rm mod}~ R_1=-Q(\zeta)^\top Q(\eta)$ with $\rank~Q(j\omega)=\w$ for all $\omega\in\R$ and $QR_1^{-1}$ strictly proper, then there exists a strictly positive-real completion $M\in\R^{\tt w \times w}[\xi]$ for $R_2 R_1^{-1}$.
\end{theorem}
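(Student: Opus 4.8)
The plan is to read a polynomial Lyapunov equation (PLE) off the hypothesis, deduce strict positive-realness of a numerator $Y$ over the denominator $R_1$, and then trade $Y$ for a right multiple of $R_2$ without disturbing the Hermitian part on the imaginary axis. First I would extract the PLE: since $QR_1^{-1}$ is strictly proper I may take $Q$, $\Psi_1$ and the induced $Y\in\R^{\w\times\w}[\xi]$ to be $R_1$-canonical (Th.~\ref{th:LyapfromQDF}), so that the assumption $(\zeta+\eta)\Psi_1(\zeta,\eta)\mod R_1=-Q(\zeta)^\top Q(\eta)$ becomes
\[
(\zeta+\eta)\Psi_1(\zeta,\eta)=Y(\zeta)^\top R_1(\eta)+R_1(\zeta)^\top Y(\eta)-Q(\zeta)^\top Q(\eta)\;.
\]
Substituting $\zeta=-\xi$, $\eta=\xi$ annihilates the left-hand side and leaves $Y(-\xi)^\top R_1(\xi)+R_1(-\xi)^\top Y(\xi)=Q(-\xi)^\top Q(\xi)$. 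Since $R_1$ is Hurwitz, $YR_1^{-1}$ is analytic in $\C_+$; and congruence of this identity by $R_1(j\omega)^{-1}$ shows $(YR_1^{-1})(-j\omega)^\top+(YR_1^{-1})(j\omega)>0$ for all $\omega\in\R$, because $\rank Q(j\omega)=\w$. Hence $YR_1^{-1}$ is strictly positive-real; if $Y$ were right-divisible by $R_2$ the completion $M:=YR_2^{-1}$ would finish the proof, but in general it is not, so $Y$ must be modified.

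Next I would bring in the standard gluing conditions to control that modification. Write the minimal state map of $\B_1$ as $X_1=\col(X_2,X_1')$ with $X_1'(\xi)=\Pi X_2(\xi)+Z(\xi)R_2(\xi)$, $Z$ polynomial, as in Lemma~\ref{lemma:state}, and let $\widetilde\Psi_1=\begin{bmatrix}\Psi_{11}&\Psi_{12}\\\Psi_{12}^\top&\Psi_{22}\end{bmatrix}$ be the coefficient matrix of $\Psi_1$ in the $X_1$-basis, partitioned conformally. The re-initialisation map of a $1\!\rightarrow\!2$ switch is $\begin{bmatrix}I_{n_2}&0\end{bmatrix}$, so MLF condition~3 at such a switch forces $\widetilde\Psi_1\ge\begin{bmatrix}\widetilde\Psi_2&0\\0&0\end{bmatrix}$, where $\widetilde\Psi_2$ is the coefficient matrix of $\Psi_2:=\Psi_1\mod R_2$; a short congruence argument then forces $\Psi_{12}=-\Pi^\top\Psi_{22}$, equivalently $\Psi_1(\zeta,\eta)=\Psi_2(\zeta,\eta)+R_2(\zeta)^\top Z(\zeta)^\top\Psi_{22}Z(\eta)R_2(\eta)$. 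Reducing $Y$ modulo $R_2$ and using this, I obtain a decomposition $Y=Y_2+M_0R_2$ with $Y_2:=Y\mod R_2$ ($R_2$-canonical) and $M_0$ polynomial.

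Finally I would construct the completion. It suffices to produce a polynomial $M$ for which $E:=Y-MR_2$ is $R_1$-lossless, meaning $E(-\xi)^\top R_1(\xi)+R_1(-\xi)^\top E(\xi)=0$: then $MR_2R_1^{-1}=YR_1^{-1}-ER_1^{-1}$ is analytic in $\C_+$ (both terms are), and by the first step its Hermitian part on $j\R$ coincides with that of $YR_1^{-1}$ and is positive definite, so $MR_2R_1^{-1}$ is strictly positive-real and $M$ is a strictly positive-real completion of $R_2R_1^{-1}$. Writing $M=M_0+\Delta$, this requires $Y_2-\Delta R_2$ to be $R_1$-lossless; feeding the structural identity of the previous step into the PLE localises the whole obstruction in $Y_2$ (equivalently in $\Psi_2$ and the PLE it satisfies on $\B_2$), which is exactly what the standard gluing conditions constrain. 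I expect this last step to be the main obstacle: one must show that for Hurwitz $R_1,R_2$ the $R_1$-lossless polynomials exhaust $\R^{\w\times\w}[\xi]/(R_2)$ — in the scalar case $\w=1$ these are precisely $R_1\cdot(\text{odd polynomials})$ and the claim reduces to the observation that a nonzero $\xi\,g(\xi^2)$ divisible by a Hurwitz $R_2$ must have degree at least $2\deg R_2$; in the multivariable case one runs the analogous argument on the para-Hermitian spectral data of $R_1$ and $R_2$ — and then verify, by substituting the block form of $\Psi_1$ into the PLE and tracking the $R_2$-divisible terms, that the $\Delta$ so produced indeed makes $Y_2-\Delta R_2$ lossless.
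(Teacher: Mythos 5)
Your first two steps track the paper: you recover the full PLE $(\zeta+\eta)\Psi_1(\zeta,\eta)=Y(\zeta)^\top R_1(\eta)+R_1(\zeta)^\top Y(\eta)-Q(\zeta)^\top Q(\eta)$, observe that $YR_1^{-1}$ is strictly positive-real, and use MLF condition 3 at a $1\rightarrow 2$ switch together with Lemma \ref{lemma:fundstructure} to force $\Psi_{12}=-\Pi^\top\Psi_{22}$; your identity $\Psi_1(\zeta,\eta)=\Psi_2(\zeta,\eta)+R_2(\zeta)^\top Z(\zeta)^\top\Psi_{22}Z(\eta)R_2(\eta)$ is correct and is equation (\ref{LyapB2}) of the paper in disguise. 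The gap is in the last step, which is the crux of the theorem: you never actually produce $M$. You reduce the problem to finding $\Delta$ such that $Y_2-\Delta R_2$ is $R_1$-lossless and then declare the existence of such a $\Delta$ --- that ``$R_1$-lossless polynomials exhaust $\R^{\w\times\w}[\xi]/(R_2)$'' --- to be the main obstacle, offering only a scalar heuristic. Notice that if that exhaustion claim held unconditionally it would deliver a strictly positive-real completion for \emph{any} $Y$ solving a PLE with $\rank\,Q(j\omega)=\w$, making the MLF hypothesis irrelevant; this should have been a warning that the argument is not using the structural constraint where it matters. In the multivariable case the claim is not established (nor is it needed), and you also do not verify that the $\Delta$ you would obtain is compatible with the $\Delta$ ``localised in $Y_2$'' by the gluing conditions --- that sentence is an expectation, not a proof.

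The paper avoids any a posteriori correction: the structural constraint already forces $Y$ itself to be a right multiple of $R_2$. Concretely, by Prop.~4.3 of \cite{PLE} the cross term satisfies $Y(\xi)=\lim_{\mu\rightarrow\infty}\mu R_1(\mu)^{-\top}\Psi_1(\mu,\xi)$. Substituting the structured coefficient matrix (equivalently, your decomposition of $\Psi_1$), the $\Psi_2$-block contributes zero in the limit because $\mu X_2(\mu)R_1(\mu)^{-1}\rightarrow 0$, while the remaining block involves $\begin{bmatrix}\Pi&-I\end{bmatrix}X_1(\xi)=\Pi X_2(\xi)-X_1'(\xi)=-P(\xi)R_2(\xi)$ with $P$ polynomial (your $Z$), so that $Y(\xi)=M(\xi)R_2(\xi)$ exactly, with $M(\xi)=\begin{bmatrix}0&K'^\top\end{bmatrix}\Psi_{22}P(\xi)$ read off from the nonsingular matrix $K'$ of Lemmas \ref{lemma_R2}--\ref{lemma:n1-n2}. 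Strict positive-realness of $MR_2R_1^{-1}$ then follows exactly as in your first step. You already have every ingredient for this; the fix is to compute $Y$ from your decomposition via the limit formula rather than to try to repair it with a lossless term whose existence you have not shown.
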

\begin{IEEEproof}
See Appendix.
\end{IEEEproof}

\begin{remark}\rm
An interesting question is whether given a positive-real completion, a MLF induced by $\Psi_1$ and $\Psi_1~\mbox{\rm mod}~ R_2$ can be found for some $\Psi_1\in\R_s^{\tt w \times w}[\zeta,\eta]$. The existence of such a MLF can be checked by solving a structured LMI, namely that derived from the positive-real lemma for $MR_2R_1^{-1}$, together with the structural requirement that the storage function does not increase at the switching instants (see Lemma \ref{lemma:fundstructure}). Such a convex feasibility problem is analogous to those arising in structured Lyapunov problems (see \cite{BY}), and can be solved using standard LMI solvers. 
\end{remark}


\section{Conclusions}\label{sec:concl}
We  presented a framework for the modelling and stability analysis of close linear switched systems in which the  dynamical modes are not described in state-space form, and do not share a common state space. Pivotal in our approach is the concept of gluing conditions, that impose concatenation constraints on the system trajectories at the switching instants. We devised Lyapunov conditions for general gluing conditions and an arbitrary finite number of modes, amenable to be checked via systems of LMIs. We have also given Lyapunov conditions of a more algebraic flavour  based on the concept of positive-realness for two-mode SLDS. 

\appendices

\section{Background material}\label{app:backmater}
\subsection{Notation}

The space of $\n$ dimensional real
vectors is denoted by $\mathbb{R}^{\n}$, and that of $\m\times \n$ real matrices by $\mathbb{R}^{\m\times \n}$.  $\mathbb{R}^{\bullet\times \m}$ denotes the space of real matrices with $\m$ columns and an unspecified finite number of rows. 
Given  matrices $A,B\in\mathbb{R}^{\bullet\times \m}$, $\col(A,B)$ denotes the matrix obtained by stacking $A$ over $B$. 
 The ring of polynomials with real coefficients in the indeterminate $\xi$ is denoted by $\mathbb{R}[\xi]$; the ring of two-variable polynomials with real coefficients in the indeterminates $\zeta$ and $\eta$ is denoted by $\mathbb{R}[\zeta,\eta]$. $\mathbb{R}^{{\tt r}\times {\w} }[\xi]$ denotes the set of all ${\tt r}\times {\w}$ matrices with entries in $\xi$, and $\R^{\n\times\m}[\zeta,\eta]$ that of $\n\times\m$ polynomial matrices in $\zeta$ and $\eta$. The set of rational $\m\times\n$ matrices is denoted by $\R^{\m\times \n}(\xi)$. We denote by $\bar{\lambda}$  the conjugate of  $\lambda\in\C$.
The set of
infinitely differentiable functions from $\mathbb{R}$ to
$\mathbb{R}^{{\tt w}}$ is denoted by
$\mathfrak{C}^{\infty}(\mathbb{R},\mathbb{R}^{\tt w})$. If $f:\R\rightarrow \R^\bullet$, we set $f(t^-):=\lim_{\tau \nearrow t}f(\tau)$ and $f(t^+):=\lim_{\tau\searrow t}f(\tau)$ provided that these limits exist.  

\subsection{Linear differential behaviours}\label{sec:behbasics}
$\B\subseteq \mathfrak{C}^\infty (\R,\R^\w)$ is a \emph{linear time-invariant differential behaviour} if it is the set of solutions of a finite system of constant-coefficient linear differential equations, i.e. if there exists $R\in \R^{{\tt g} \times \w}[\xi]$ such that $\B = \{ w \in \mathfrak{C}^\infty (\R,\R^\w) ~|~ R(\ddt)w=0 \} =: \ker \ R(\ddt)$. If $\B=\ker R(\ddt)$, then we call $R$ a \emph{kernel representation} of $\B$. We denote with $\Lw$ the set of all linear time-invariant differential behaviours with $\w$ variables. $\B$ is  \emph{autonomous} if there are no free components in its trajectories; it can be shown that such $\B$ admits a kernel representation with $R\in\R^{\tt w \times w}[\xi]$  square and nonsingular (see \cite{yellow}, Theorem 3.2.16). 

Let $R\in\R^{\w\times\w}[\xi]$ be nonsingular, and let $f\in\R^{1\times {\tt w}}[\xi]$; $f$ is uniquely written as $fR^{-1}=s +n$, where $s\in\R^{1\times{\tt w}}(\xi)$ is a vector of strictly proper rational functions, and $n\in\R^{1\times{\tt w}}[\xi]$. We call $sR\in\R^{1\times \w}[\xi]$ the \emph{canonical representative of $f$ modulo $R$}, denoted by $f\mod R$. Note that the polynomial differential operators $f\derb$ and $f^\prime\derb$, with $f^\prime=f\mod R$, are \emph{equivalent along $\ker R\derb$} in the sense that $f\derb w=f^\prime \derb w$ for all $w\in \ker R\derb$. The definition of $R$-canonical representative extends in a natural way to polynomial matrices.

\subsection{State maps}\label{app:backmater:statemaps}
A latent variable $\ell$ (see \cite{yellow}, def. 1.3.4 ) is a \emph{state variable} for $\mathfrak{B}$ iff there exist $E,F\in\R^{\bullet \times \bullet}$, $G\in\R^{\bullet \times \w}$ such that $\mathfrak{B}=\left\{w \mid \exists~\ell \mbox{ s.t. } E\frac{d \ell}{dt}+F\ell+Gw=0\right\}$, i.e. if $\mathfrak{B}$ has a representation of first order in $\ell$ and zeroth order in $w$. The minimal number of state variables needed to represent $\mathfrak{B}$ in this way is called the \emph{McMillan degree} of $\mathfrak{B}$, denoted by $\n(\mathfrak{B})$.

A state variable for $\mathfrak{B}$ can be computed as the image of a polynomial
differential operator called a \emph{state map} (see  \cite{statemaps}). To construct state maps for  $\mathfrak{B}:=\ker R\derb$, with $R\in\R^{\w\times\w}[\xi]$ nonsingular, consider the set $\mathfrak{X}(R):=\{ f\in\mathbb{R}^{1\times \tt w}[\xi] ~\mid~ fR^{-1} \mbox{ is strictly  proper}\}$. $\mathfrak{X}(R)$ is a finite-dimensional subspace of $\mathbb{R}^{1\times \w}[\xi]$ over $\mathbb{R}$, (see \cite{statemaps}, Prop. 8.4), of dimension $n:=\deg(\det(R))$ (see \cite{statemaps}, Cor. 6.7). To compute a state map  for $\mathfrak{B}$, choose a set of generators $x_i\in\R^{1\times \w}[\xi]$, $i=1,\ldots,N$ of $\mathfrak{X}(R)$, and define $X:=\mbox{col}(x_i)_{i=1,\ldots,N}$; to obtain a \emph{minimal state map}, choose $\{x_i\}_{i=1,\ldots,N}$ so that they form a basis of $\mathfrak{X}(R)$. It can be shown that there exists a state map $X$ and  $A\in\R^{\bullet \times \bullet}$, $B\in\R^{\bullet\times\w}$ such that $\xi X(\xi)=AX(\xi)+B R(\xi)$ (see \cite{statemaps}, Th. 6.2).

Let $\B\in\Lw$, and $X\in\R^{\bullet \times \w}[\xi]$ be a state map for $\B$. A polynomial differential operator $d\left(\frac{d}{dt} \right)$ is a (linear) \emph{function of the state} of $\mathfrak{B}$ if there exists a constant vector $f\in \R^{1\times \tt{w}}$ such that $d\left(\frac{d}{dt} \right)w=fX\derb w$ for all $w\in\B$.

\subsection{Quadratic differential forms}\label{app:backmater:QDF}


Let $\Phi\in\mathbb{R}^{\w\times \w}[\zeta,\eta]$; then
$\Phi(\zeta,\eta)=\sum_{h,k}\Phi_{h,k}\zeta^{h}\eta^{k}$,
where $\Phi_{h,k}\in\mathbb{R}^{\w\times \w}$ and the sum extends over a finite set of nonnegative indices. $\Phi(\zeta,\eta)$
induces the {\em quadratic differential form} (QDF) acting on $\Cinf$-trajectories defined by $Q_{\Phi}(w):=\sum_{h,k}(\frac{d^{h}w}{dt^{h}})^{\top}\Phi_{h,k}\frac{d^{k}w}{dt^{k}}$. Without loss of generality QDF is induced by a {\em symmetric} two-variable polynomial matrix $\Phi(\zeta, \eta)$, i.e. one such that $\Phi(\zeta,\eta)=\Phi(\eta,\zeta)^\top$; we denote the set of such matrices by $\R_s^{\w\times\w}[\zeta,\eta]$. 


Given $Q_{\Psi}$, its
{\em derivative} is the QDF $Q_{\Phi}$ defined by
$Q_{\Phi}(w):= \frac{d}{dt} (Q_{\Psi}(w))$ for all
$w\in\Cinf(\R,\R^\w)$; this holds if and only if $\Phi(\zeta,\eta)=
(\zeta+\eta)\Psi(\zeta,\eta)$ (see \cite{QDF}, p. 1710). 

$Q_{\Phi}$ is \emph{nonnegative along} $\B\in\Lw$, denoted by $Q_{\Phi} \overset{\B}{\geq} 0$ if $Q_{\Phi}(w) \geq 0$ for all $w \in \B$; and \emph{positive along $\B$}, denoted by $Q_{\Phi}\overset{\B}{>}0$, if $Q_{\Phi} \overset{\B}{\geq} 0$ and $[Q_{\Phi}(w)=0~\forall w \in \B]$ $\Longrightarrow$ $[w=0]$. If $\B=\Cinf(\R,\R^\w)$, then we call $Q_\Phi$ simply \emph{nonnegative}, respectively \emph{positive}. For algebraic characterizations of these properties  see \cite{QDF}, pp. 1712-1713.

Let $R\in\R^{\tt w \times w}[\xi]$ be nonsingular and $\Phi\in\R^{\tt w \times w}[\zeta,\eta]$. Factorise $\Phi(\zeta,\eta)=M(\zeta)^\top N(\eta)$ and compute the $R$-canonical representatives (see App. \ref{sec:behbasics})   $M^\prime=M\;\mbox{mod}\;R$ and $N^\prime=N\;\mbox{mod}\;R$. The \emph{$R$-canonical representative of $\Phi(\zeta,\eta)$} is defined by $\Phi(\zeta,\eta)\;\mbox{mod}\;R:=M^\prime(\zeta)^\top N^\prime(\eta)$. The QDFs $Q_{\Phi}$, $Q_{\Phi'}$ are \emph{equivalent along $\ker R\derb$}, i.e. $Q_{\Phi'}(w)=Q_{\Phi}(w)$ for all $w\in \ker R\derb$.

\subsection{Dissipativity}\label{sec:Dissip}

A controllable (see Ch. 5 of \cite{yellow}) behaviour $\B\in\Lw$ is  \emph{dissipative} with respect to the \emph{supply rate} $Q_\Phi$ if there exists a QDF $Q_\Psi$, called a \emph{storage function}, such that $Q_\Phi(w)-\frac{d}{dt} Q_\Psi(w)\geq 0 \mbox{ for all } w\in\B$. This inequality holds iff there exists a \emph{dissipation function}, i.e. a QDF $Q_\Delta\overset{\B}{\geq} 0$ such that for all $w\in\B$ of compact support it holds that $\int_{-\infty}^{+\infty} Q_\Phi(w)(t) dt=\int_{-\infty}^{+\infty} Q_\Delta(w)(t) dt$ (see Prop. 5.4 of \cite{QDF}). Moreover, there is a one-one correspondence between storage- and dissipation functions, defined by $\frac{d}{dt} Q_\Psi(w)+Q_\Delta(w)=Q_\Phi(w)$ for all $w\in\B$. If $\B=\Cinf(\R,\R^\w)$, this equality holds if and only if $(\zeta+\eta)\Psi(\zeta,\eta)+\Delta(\zeta,\eta)=\Phi(\zeta,\eta)$.

\normalsize

\section{Proofs}\label{app:proofs}

\begin{IEEEproof}[\bf Proof of Th. \ref{th:MLFSLDS}]
Let $s\in\mathcal{S}$ be a switching signal, and  from $\{Q_{\Psi_1},\ldots,Q_{\Psi_N} \}$ define the ``switched functional" $Q_\Lambda$ acting on $\B^\Sigma$ by $Q_\Lambda(w)(t):=Q_{\Psi_{s(t)}}(w)(t)$. Observe that in every interval $[t_{j-1},t_{j})$ $Q_\Lambda$ is nonnegative, continuous and strictly decreasing, since $Q_{\Psi_{s(t_{j-1})}}$ satisfies conditions $1)-2)$. Moreover, for every admissible trajectory the value of $Q_\Lambda$ does not increase at switching instants (condition $3)$).  It follows from standard arguments (see e.g. Th. 4.1 of \cite{YMH}) that $\Sigma$ is asymptotically stable.
\end{IEEEproof}

\begin{IEEEproof}[\bf Proof of Prop. \ref{prop:PLEviaLMI}]
The existence of $\overline{K}=\overline{K}^\top\in\R^{n\times n}$, $\overline{Y}\in\R^{\w\times n}$, $\overline{Q}\in\R^{\bullet \times n}$ follows from Th. \ref{th:LyapfromQDF} and the fact that the rows of $X(\xi)$ are a basis for the vector space over $\R$ defined by $\{f\in\R^{1\times \w}[\xi] \mid fR^{-1} \mbox{ \rm is strictly proper}\}$. The fact that the degree of $X(\xi)$ is less than that of $R(\xi)$ follows from $XR^{-1}$ being strictly proper and Lemma 6.3-10 of \cite{kailath}. 

In order to prove the equivalence of statements $1.$ and $2.$, 
define $S_L(\xi):= \begin{bmatrix} I_\w& \xi I_\w& \ldots & \xi^{L} I_\w\end{bmatrix}^\top$; the equivalence follows in a straightforward way from the first part of the claim and  the equalities  $
X(\xi)=\begin{bmatrix}  X_0 &\ldots &X_{L-1}&0_{n \times \w}\end{bmatrix} S_L(\xi)$, $ 
\xi X(\xi)=\begin{bmatrix} 0_{n \times \w}&  X_0 &\ldots &X_{L-1}\end{bmatrix} S_L(\xi) $, and $R(\xi)=\begin{bmatrix}  R_0 &\ldots &R_{L-1}&R_{L}\end{bmatrix}S_L(\xi)$. The final part of the claim follows in straightforward way. \end{IEEEproof}

\begin{IEEEproof}[\bf Proof of Th. \ref{th:LMI4MLF}] 
If solutions $\overline{K}_k$, $\overline{Y}_k$ to (\ref{eq:LMI4MLF1}) exist,  multiplying on the left by $S_L(\zeta)^\top$ defined as in the proof of Prop. \ref{prop:PLEviaLMI} and on the right by $S_L(\eta)$, and defining $\Psi_k(\zeta,\eta):=X_k(\zeta)^\top \overline{K}_kX_k(\eta)$ and $Y_k(\xi):=\overline{Y}_kX_k(\xi)$ we obtain $(\zeta+\eta)\Psi(\zeta,\eta)-Y(\zeta)^\top R(\eta)-R(\zeta)^\top Y(\eta)=\Phi_i(\zeta,\eta)$. Since $Y_i$ is $R$-canonical, it follows from Th. \ref{th:LyapfromQDF} that also $\Phi(\zeta,\eta)$ is,  and consequently $\overline{F}_i$ exist as claimed. Now observe that the first inequality in (\ref{eq:LMI4MLF2}) is equivalent with $V_k^\top \overline{F}_k V_k<0$ and thus it implies $Q_{\Phi_k}(w)=\frac{d}{dt}Q_{\Psi_k}(w)<0$ for all $w\in\B_i$. Applying Th. \ref{th:LyapfromQDF} we conclude that $Q_{\Psi_k}$ is a Lyapunov function for $\B_k$.  The second LMI in (\ref{eq:LMI4MLF2}) implies condition $3.$ of Th. \ref{th:MLFSLDS}. 
\end{IEEEproof}

\begin{IEEEproof}[\bf Proof of Prop. \ref{prop:SPR&PLE}]
From the strict positive-realness of $ND^{-1}$ (see Def. \ref{Def:PRM}) and the fact that $D$ is Hurwitz conclude that $N(-j\omega)^\top D(j\omega)+D(-j\omega)^\top N(j\omega)>0$ for all $\omega\in\R$. The existence of $Q$ then follows from standard arguments in polynomial spectral factorisation.  

That $\Psi$ is a polynomial matrix follows from Th. 3.1 of \cite{QDF}. Since $\mbox{\rm rank}~\mbox{\rm col}(D(\lambda), Q(\lambda))=\w$ for all $\lambda\in\C$, $\frac{d}{dt} Q_\Psi(w)<0$ for all $w\in\ker D\derb$, $w\neq 0$. Apply Th. \ref{th:LyapfromQDF} to conclude that $Q_\Psi(w)>0$ for all nonzero $w\in\ker D\derb$. This proves that $\Psi$ is a Lyapunov function for $\ker D\derb$. That $\Psi$ is $D$-canonical, and that $QD^{-1}$ is strictly proper, follow from strict properness of $ND^{-1}$ and Th. \ref{th:LyapfromQDF}. 

We  prove the second part of the claim. Use Prop. 4.10 of \cite{QDF} to conclude that since $\Psi$ is $D$-canonical, it is also $\geq 0$. Denote $\Psi^\prime:=\Psi \mod N$. Since $Q_\Psi(w)=Q_{\Psi^{\prime}}(w)$ for all $w\in\ {\ker~N\derb}$, it follows that $Q_{\Psi^{\prime}}\geq 0$ also along $\ker~N\derb$. We now show that $\frac{d}{dt} Q_{\Psi^{\prime}}$ is negative along $\ker~N\derb$. To do so it suffices to show that $\mbox{\rm col}(Q(\lambda), N(\lambda))=\w$ for all $\lambda\in\C$. Assume by 
contradiction that there exists $\lambda\in\C$ and a corresponding $v\in\C^\w$, $v\neq 0$, such that $Q(\lambda)v=0$ and $N(\lambda)v=0$. Substitute $\zeta=-\lambda$, $\eta=\lambda$ in the PLE, obtaining $D(-\lambda)^\top N(\lambda)+N(-\lambda)^\top D(\lambda)=Q(-\lambda)^\top Q(\lambda)$. Multiply on the right by $v$; it follows that $N(-\lambda)^\top D(\lambda)v=0$. Since $N$ is Hurwitz, this implies $D(\lambda)v=0$, but this contradicts the assumption $\mbox{\rm rank}~\mbox{\rm col}(D(\lambda), Q(\lambda))=\w$.  
\end{IEEEproof}


\begin{IEEEproof}[\bf Proof of Lemma \ref{lemma:state}] That  $n_2<n_1$ follows from $R_2R_1^{-1}$ being strictly proper. 

To prove the claim on $X_1$ defined by (\ref{stateB1}), define 
 $\mathfrak{X}_i:=\{ f\in\mathbb{R}^{1\times \tt w}[\xi] ~|~ fR_i^{-1} \mbox{ is strictly  proper}\}$, $i=1,2$; we now show that $\mathfrak{X}_2 \subset \mathfrak{X}_1$. Observe that $fR_2^{-1}\cdot R_2 R_1^{-1}=f R_1^{-1}$; since both $f R_2^{-1}$ and $R_2 R_1^{-1}$ are strictly proper, so is their product. Consequently, $f\in\mathfrak{X}_1$. Observe that $\mathfrak{X}_i$ is the state space of $\B_i$, $i=1,2$ (see App. \ref{app:backmater:statemaps}). 

Arrange the vectors of a  basis for $\mathfrak{X}_2$ in $X_2\in\R^{n_2\times \w}[\xi]$; then $X_2\left(\frac{d}{dt}\right)$ is a  state map for $\mathfrak{B}_2$. Complete $X_2$ with $X_1^\prime\in\R^{(n_1-n_2)\times \w}[\xi]$ to form a basis of $\mathfrak{X}_1$; this defines a state map for $\B_1$. 

Since each row of $X_1^\prime~ \mbox{\rm mod}~ R_2$ belongs to $\mathfrak{X}_2$, it can be written as a linear combination of the rows of $X_2$. This proves that $\Pi$ exists. \end{IEEEproof}

\begin{IEEEproof}[\bf Proof of Theorem \ref{th_01}]
The existence of $Q\in\R^{\bullet\times\w}[\xi]$ and the $R_1$-canonicity of $\Psi_1$ follow from Prop. \ref{prop:SPR&PLE}. To prove that $\Psi_1$ and $\Psi_2:=\Psi_1~\mbox{\rm mod}~ R_2$ yield a MLF we show that:
\begin{itemize}
\item[\mbox{\bf C1.}] $Q_{\Psi_1} \stackrel{\mathfrak{B}_1}\ge0$ and $\frac{d}{dt}Q_{\Psi_1} \stackrel{\mathfrak{B}_1}< 0$;
\item[\mbox{\bf C2.}] $Q_{\Psi_2} \stackrel{\mathfrak{B}_2}\ge0$ and $\frac{d}{dt}Q_{\Psi_2} \stackrel{\mathfrak{B}_2}< 0$;
\item[\mbox{\bf C3.}] The multiple functional associated with $\Psi_1$ and $\Psi_2$ does not increase at switching instants.
\end{itemize}

Conditions {\bf C1} and {\bf C2} follow from Prop. \ref{prop:SPR&PLE}. 

To prove {\bf C3}, we first define the \emph{coefficient matrices} of $\Psi_1$ and $\Psi_2$. Since $\Psi_1$ is $R_1$-canonical, it can be written as $X_1(\zeta)^\top \widetilde{\Psi}_1 X_1(\eta)$ for some coefficient matrix $\widetilde{\Psi}_1\in\R^{n_1\times n_1}$.  Since $Q R_1^{-1}$ is strictly proper, it follows (see Th. \ref{th:LyapfromQDF}) that $Q_{\Psi_1}\overset{\B}{>}0$ and since $X_1$ is a minimal state map for $\B_1$ it follows that $\widetilde{\Psi}_1>0$. Note that $\col(X_2(\xi),X^\prime_1(\xi))~\mbox{\rm mod}~R_2=\col(X_2(\xi)~\mbox{\rm mod}~R_2,X^\prime_1(\xi)~\mbox{\rm mod}~R_2)=\col(X_2(\xi),\Pi X_2(\xi))$. Consequently (see Prop. 4.9 of \cite{QDF}), 
\[
\Psi_1(\zeta,\eta)\,\mbox{mod}\,R_2=\begin{bmatrix}X_2(\zeta)^\top & X_2(\zeta)^\top \Pi^\top \end{bmatrix}\widetilde{\Psi}_1 \begin{bmatrix} X_2(\eta) \\ \Pi X_2(\eta) \end{bmatrix}\;,
\]
from which it follows that the coefficient matrix of $\Psi_2$ is $\widetilde{\Psi}_2=\col(I_{n_2},\Pi)^\top~ \widetilde{\Psi}_1~\col(I_{n_2},\Pi)$. 

We  prove {\bf C3} showing that $\widetilde{\Psi}_1$ and $\widetilde{\Psi}_2$ satisfy some structural properties. We begin proving the following linear algebra result. 
\begin{lemma}\label{lemma:fundstructure}
Let $\Pi\in\R^{(n_1-n_2)\times n_2}$, and $\widetilde{\Psi}_1=\widetilde{\Psi}_1^\top\in\R^{n_1\times n_1}$. Assume  $\widetilde{\Psi}_1>0$, and define $\widetilde{\Psi}_2^e:=\begin{bmatrix} I_{n_2}& \Pi^\top\\ (0_{n_1-n_2\times n_2})&(0_{n_1-n_2\times n_1-n_2}) \end{bmatrix} \widetilde{\Psi}_1 \begin{bmatrix} I_{n_2}&(0_{n_2\times n_1-n_2})\\ \Pi &(0_{n_1-n_2\times n_1-n_2})\end{bmatrix}$.  $\widetilde{\Psi}_1\geq \widetilde{\Psi}_2^e$ if and only if  there exist $\Psi_{11}\in\R^{n_2 \times n_2}$, $\Psi_{12}\in\R^{n_2 \times (n_1-n_2)}$ and $\Psi_{22}\in\R^{(n_1-n_2) \times (n_1-n_2)}$ such that $\widetilde{\Psi}_1=
\begin{bmatrix}
\Psi_{11} & -\Pi^\top \Psi_{22}\\
 -\Psi_{22} \Pi & \Psi_{22}
\end{bmatrix}$. 
\end{lemma}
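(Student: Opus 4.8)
The plan is to prove the equivalence in Lemma~\ref{lemma:fundstructure} by working out both directions directly in terms of block partitions of $\widetilde{\Psi}_1$. Partition $\widetilde{\Psi}_1 = \begin{bmatrix} \Psi_{11} & \Psi_{12} \\ \Psi_{12}^\top & \Psi_{22} \end{bmatrix}$, with $\Psi_{11}=\Psi_{11}^\top\in\R^{n_2\times n_2}$, $\Psi_{12}\in\R^{n_2\times(n_1-n_2)}$, $\Psi_{22}=\Psi_{22}^\top\in\R^{(n_1-n_2)\times(n_1-n_2)}$. First I would compute $\widetilde{\Psi}_2^e$ explicitly using this partition: multiplying out the three block matrices gives $\widetilde{\Psi}_2^e = \begin{bmatrix} \Psi_{11}+\Psi_{12}\Pi+\Pi^\top\Psi_{12}^\top+\Pi^\top\Psi_{22}\Pi & 0 \\ 0 & 0 \end{bmatrix}$, i.e. the only nonzero block is the $(1,1)$ block, which equals $\col(I_{n_2},\Pi)^\top \widetilde{\Psi}_1 \col(I_{n_2},\Pi)$ (this is exactly $\widetilde{\Psi}_2$ from the body of the proof, embedded into the larger space by padding with zeros).

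Next I would form the difference $\widetilde{\Psi}_1 - \widetilde{\Psi}_2^e$ and examine when it is positive semidefinite. Writing $M := \Psi_{12}+\Pi^\top\Psi_{22}$, a short calculation gives
\[
\widetilde{\Psi}_1-\widetilde{\Psi}_2^e = \begin{bmatrix} -\Psi_{12}\Pi-\Pi^\top\Psi_{12}^\top-\Pi^\top\Psi_{22}\Pi & \Psi_{12} \\ \Psi_{12}^\top & \Psi_{22} \end{bmatrix}
= \begin{bmatrix} -M\Pi-\Pi^\top M^\top +\Pi^\top\Psi_{22}\Pi & M-\Pi^\top\Psi_{22} \\ M^\top-\Psi_{22}\Pi & \Psi_{22}\end{bmatrix}.
\]
The key observation is the congruence: conjugating $\widetilde{\Psi}_1-\widetilde{\Psi}_2^e$ by the invertible block matrix $T:=\begin{bmatrix} I_{n_2} & 0 \\ \Pi & I_{n_1-n_2}\end{bmatrix}$ (i.e. computing $T(\widetilde{\Psi}_1-\widetilde{\Psi}_2^e)T^\top$, or its inverse version) clears the $\Pi$-dependence in the $(1,1)$ block and turns the matrix into $\begin{bmatrix} 0 & M \\ M^\top & \Psi_{22}\end{bmatrix}$ — one should check the exact form of the congruence, but the point is that after the congruence the $(1,1)$ block becomes $0$. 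A symmetric matrix of the form $\begin{bmatrix} 0 & M \\ M^\top & \Psi_{22}\end{bmatrix}$ is positive semidefinite if and only if $M=0$ and $\Psi_{22}\geq 0$: indeed, if the $(1,1)$ block of a PSD matrix is zero, then the corresponding off-diagonal block must also vanish (otherwise one finds a vector making the quadratic form negative). Since congruence preserves the sign of the quadratic form, $\widetilde{\Psi}_1\geq\widetilde{\Psi}_2^e$ is equivalent to $M=\Psi_{12}+\Pi^\top\Psi_{22}=0$, i.e. $\Psi_{12}=-\Pi^\top\Psi_{22}$, which is precisely the claimed structural form $\widetilde{\Psi}_1 = \begin{bmatrix}\Psi_{11} & -\Pi^\top\Psi_{22}\\ -\Psi_{22}\Pi & \Psi_{22}\end{bmatrix}$. (Note the positivity hypothesis $\widetilde{\Psi}_1>0$ forces $\Psi_{22}>0$, so the condition $\Psi_{22}\geq 0$ is automatic and need not be stated separately.)

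The converse direction is then immediate: if $\widetilde{\Psi}_1$ has the stated form, substituting $\Psi_{12}=-\Pi^\top\Psi_{22}$ gives $M=0$, so $\widetilde{\Psi}_1-\widetilde{\Psi}_2^e$ is congruent to $\begin{bmatrix} 0 & 0 \\ 0 & \Psi_{22}\end{bmatrix}\geq 0$, hence $\widetilde{\Psi}_1\geq\widetilde{\Psi}_2^e$. The main obstacle, and the only step requiring care, is getting the congruence transformation and its bookkeeping exactly right — in particular verifying that the chosen $T$ really does annihilate the $\Pi$-terms in the $(1,1)$ block and tracking how $\Psi_{12}$ transforms — and then invoking cleanly the standard fact that a PSD symmetric block matrix with a zero diagonal block has the matching off-diagonal block equal to zero. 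Everything else is routine block multiplication.
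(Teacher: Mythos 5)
Your proof is correct, and it is essentially the paper's argument --- a block partition of $\widetilde{\Psi}_1$ followed by a congruence transformation of the difference $D:=\widetilde{\Psi}_1-\widetilde{\Psi}_2^e$ --- but the two congruences are different. The paper takes the Schur complement with respect to the (necessarily positive definite) block $\Psi_{22}$, which block-diagonalises $D$ into $\mathrm{diag}\bigl(-M\Psi_{22}^{-1}M^\top,\ \Psi_{22}\bigr)$ with $M=\Psi_{12}+\Pi^\top\Psi_{22}$, and then uses $\Psi_{22}^{-1}>0$ to conclude $M=0$. You instead annihilate the $(1,1)$ block: with $T=\begin{bmatrix} I_{n_2} & \Pi^\top \\ 0 & I_{n_1-n_2}\end{bmatrix}$ one gets $TDT^\top=\begin{bmatrix} 0 & M \\ M^\top & \Psi_{22}\end{bmatrix}$ (this settles the bookkeeping you flagged: it is $T^\top D T$ for the $T$ you wrote down, equivalently $TDT^\top$ for its transpose), and you then invoke the standard fact that a positive semidefinite symmetric matrix with a vanishing diagonal block has the matching off-diagonal block equal to zero. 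Both routes are elementary and land on the same characterisation $\Psi_{12}=-\Pi^\top\Psi_{22}$; the only substantive difference is that your version does not use $\Psi_{22}^{-1}$ and so would survive with $\widetilde{\Psi}_1\geq 0$ (hence $\Psi_{22}$ only semidefinite), whereas the paper's Schur-complement step needs $\Psi_{22}$ invertible --- a generality that is irrelevant here since $\widetilde{\Psi}_1>0$ is assumed. Your explicit computation of $\widetilde{\Psi}_2^e$ and the converse direction are both correct.
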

\begin{IEEEproof}[\bf Proof of Lemma \ref{lemma:fundstructure}]
Partition $\widetilde{\Psi}_1=:\begin{bmatrix} \Psi_{11} & \Psi_{12} \\ \Psi_{12}^\top & \Psi_{22} \end{bmatrix}$, with $\Psi_{11}\in\R^{n_2 \times n_2}$, $\Psi_{12}\in\R^{n_2 \times (n_1-n_2)}$ and $\Psi_{22}\in\R^{(n_1-n_2) \times (n_1-n_2)}$. Straightforward manipulations show that $\widetilde{\Psi}_1\geq \widetilde{\Psi}_2^e$ iff
\[
\begin{bmatrix} -(\Psi_{12}+\Pi^\top \Psi_{22})\Psi_{22}^{-1}(\Psi_{12}^\top + \Psi_{22}\Pi) & 0 \\ 0 & \Psi_{22}  \end{bmatrix}\ge 0 \;.
\]
Now $\Psi_{22}>0$, since $\widetilde{\Psi}_1>0$;  thus the inequality holds iff $\Psi_{12}^\top=-\Psi_{22}\Pi$. 
\end{IEEEproof}

We aim to show that Lemma \ref{lemma:fundstructure} holds for the  coefficient matrix of $\Psi_1$ and the  $\Pi$ arising from the standard gluing conditions. To this purpose we  first prove the following result.
\begin{lemma}\label{lemma_R2}
Define $K:=\lim_{\xi\rightarrow \infty} \xi X_1^\prime(\xi) R_1(\xi)^{-1}$; then $K\in \R^{(n_1-n_2)\times \tt w}$. Moreover, partition $\widetilde{\Psi}_1$ as $\widetilde{\Psi}_1=:\begin{bmatrix} \Psi_{11} & \Psi_{12} \\ \Psi_{12}^\top & \Psi_{22} \end{bmatrix}$, with $\Psi_{11}\in\R^{n_2 \times n_2}$, $\Psi_{12}\in\R^{n_2 \times (n_1-n_2)}$ and $\Psi_{22}\in\R^{(n_1-n_2) \times (n_1-n_2)}$. Then $R_2(\xi)=K^\top\left(\Psi_{12}^\top X_2(\xi) + \Psi_{22}X_1^\prime(\xi) \right)$. 
\end{lemma}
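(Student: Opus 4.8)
The plan is to read off Lemma \ref{lemma_R2} from the polynomial Lyapunov equation satisfied by $\Psi_1$, by ``evaluating at infinity''. Recall from (\ref{polystorage2}) that $(\zeta+\eta)\Psi_1(\zeta,\eta)=R_1(\zeta)^\top R_2(\eta)+R_2(\zeta)^\top R_1(\eta)-Q(\zeta)^\top Q(\eta)$, i.e. $\Psi_1$ solves (\ref{eq:PLE}) with $R=R_1$ and $Y=R_2$. First I would dispose of the assertion that $K$ is constant: since $X_1(\xi)=\col(X_2(\xi),X_1'(\xi))$ is a minimal state map for $\B_1=\ker R_1\derb$, every row of $X_1$, hence of $X_1'$, lies in $\mathfrak{X}(R_1)$, so $X_1'(\xi)R_1(\xi)^{-1}$ is strictly proper; therefore $\xi X_1'(\xi)R_1(\xi)^{-1}$ is proper and the limit $K=\lim_{\xi\to\infty}\xi X_1'(\xi)R_1(\xi)^{-1}$ exists entrywise and is a real $(n_1-n_2)\times\w$ matrix.

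For the factorisation of $R_2$, I would use that $\Psi_1$ is $R_1$-canonical (Prop. \ref{prop:SPR&PLE}) and that the rows of $X_1$ form a basis of $\mathfrak{X}(R_1)$ to write $\Psi_1(\zeta,\eta)=X_1(\zeta)^\top\widetilde\Psi_1 X_1(\eta)$ with $\widetilde\Psi_1=\widetilde\Psi_1^\top$. Then I would multiply the Lyapunov equation on the right by $R_1(\eta)^{-1}$ and let $\eta\to\infty$ with $\zeta$ held fixed, working entrywise in the rational functions of $\eta$. On the right-hand side $R_2(\eta)R_1(\eta)^{-1}\to 0$ (hypothesis) and $Q(\eta)R_1(\eta)^{-1}\to 0$ ($QR_1^{-1}$ strictly proper, by Prop. \ref{prop:SPR&PLE}), so the limit is $R_2(\zeta)^\top$. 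On the left-hand side, using $(\zeta+\eta)X_1(\eta)R_1(\eta)^{-1}=\zeta X_1(\eta)R_1(\eta)^{-1}+\eta X_1(\eta)R_1(\eta)^{-1}$, the first term tends to $0$ and the second to $B:=\lim_{\eta\to\infty}\eta X_1(\eta)R_1(\eta)^{-1}$, which exists because $X_1R_1^{-1}$ is strictly proper. This yields $R_2(\zeta)^\top=X_1(\zeta)^\top\widetilde\Psi_1 B$, i.e. $R_2(\xi)=B^\top\widetilde\Psi_1 X_1(\xi)$.

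It remains to identify the block structure of $B$. By definition its bottom $(n_1-n_2)$ rows are $K=\lim_{\xi\to\infty}\xi X_1'(\xi)R_1(\xi)^{-1}$, and its top $n_2$ rows are $\lim_{\xi\to\infty}\xi X_2(\xi)R_1(\xi)^{-1}$. To see the latter vanishes, factor $X_2(\xi)R_1(\xi)^{-1}=\bigl(X_2(\xi)R_2(\xi)^{-1}\bigr)\bigl(R_2(\xi)R_1(\xi)^{-1}\bigr)$: both factors are strictly proper ($X_2R_2^{-1}$ because $X_2$ is a state map for $\B_2$, and $R_2R_1^{-1}$ by hypothesis), so their product has vanishing first Markov parameter, i.e. $\lim_{\xi\to\infty}\xi X_2(\xi)R_1(\xi)^{-1}=0$. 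Hence $B=\col(0,K)$, and substituting this together with the partition $\widetilde\Psi_1=\begin{bmatrix}\Psi_{11}&\Psi_{12}\\\Psi_{12}^\top&\Psi_{22}\end{bmatrix}$ into $R_2(\xi)=B^\top\widetilde\Psi_1 X_1(\xi)$ gives $R_2(\xi)=K^\top\bigl(\Psi_{12}^\top X_2(\xi)+\Psi_{22}X_1'(\xi)\bigr)$, as claimed.

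I expect the only slightly delicate steps to be analytic bookkeeping: justifying that the limit $\eta\to\infty$ may be taken entrywise and commutes with the polynomial dependence on $\zeta$, and the degree count showing that a product of two strictly proper rational matrices is ``doubly strictly proper'', so that multiplying it by $\xi$ still gives a matrix vanishing at infinity. Everything else is formal manipulation of the Lyapunov equation and of the block partitions of $X_1$ and $\widetilde\Psi_1$.
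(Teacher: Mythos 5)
Your proof is correct and follows essentially the same route as the paper: both arguments identify $R_2$ by evaluating the polynomial Lyapunov equation at $\eta\to\infty$ after right-multiplication by $R_1(\eta)^{-1}$, and both show that the top block of $\lim_{\eta\to\infty}\eta X_1(\eta)R_1(\eta)^{-1}$ vanishes by factoring $X_2R_1^{-1}=(X_2R_2^{-1})(R_2R_1^{-1})$ as a product of strictly proper matrices. The only difference is presentational: the paper routes the limit computation through the state-map shift relation $\xi X_1(\xi)=A_1X_1(\xi)+F_1R_1(\xi)$ and cites Prop.\ 4.3 of the reference on polynomial Lyapunov equations, whereas you carry out the same limit directly, which makes the argument self-contained.
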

\noindent\begin{IEEEproof}[\bf Proof of Lemma \ref{lemma_R2}] That the limit is finite follows from $X_1^\prime R_1^{-1}$ being strictly proper. To prove the rest, recall from App. \ref{app:backmater:statemaps} that there exist $A_1\in\R^{n_1\times n_1}$, $F_1\in\R^{n_1\times \w}$ such that 
\begin{equation}
\xi X_1(\xi)=A_1 X_1(\xi)+F_1(\xi) R_1(\xi)\;.
\label{poly-ss}
\end{equation}
Multiply both sides of (\ref{poly-ss}) by $R_1^{-1}$, and take the limit for $\xi\rightarrow \infty$.  Since $R_2R_1^{-1}$ is strictly proper and $X_2(\xi)$ is a state map for $\B_2$, it follows that $\lim_{\xi \rightarrow \infty} \xi X_2(\xi)R_1(\xi)^{-1}=0_{n_2\times \w}$. Moreover, $\lim_{\xi \rightarrow \infty}  X_1(\xi)R_1(\xi)^{-1}=0_{n_1\times \w}$. Consequently $F_1$ is constant, and 
\[
F_1= \lim_{\xi \rightarrow \infty} \col(0_{n_2\times \w} ,\xi X_1^\prime (\xi)R_1(\xi)^{-1})=\col(0_{n_2\times \w}, K)\; .
\] 
The claim on $R_2$ now follows from Prop. 4.3 of \cite{PLE}. 
\end{IEEEproof}

From Lemma \ref{lemma_R2} and the fact that $R_2$ is square and nonsingular, it follows that $K^\top$ is of full row rank, and consequently $n_1-n_2\geq \w$. We now prove that $K$  is square, thus nonsingular. 
\begin{lemma}\label{lemma:n1-n2}
$\deg(\det(R_1))-\deg(\det(R_2))=n_1-n_2=\tt w$, and consequently $K$ is nonsingular.
\end{lemma}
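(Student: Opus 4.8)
The plan is to deduce the statement from the single fact that
$M_\infty:=\lim_{\xi\to\infty}\xi\,R_2(\xi)R_1(\xi)^{-1}\in\R^{\w\times\w}$ --- which is well defined because $R_2R_1^{-1}$ is strictly proper --- has positive definite symmetric part, and is in particular nonsingular. Granting this, $n_1-n_2=\w$ will follow from two elementary observations below; and once $n_1-n_2=\w$ is known, the matrix $K\in\R^{(n_1-n_2)\times\w}$ of Lemma \ref{lemma_R2} is square, so that the already-established full row rank of $K^\top$ makes $K$ nonsingular, which is the remaining claim.

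The first observation is that $n_1-n_2=\dim\mathcal{W}$, where $\mathcal{W}:=\{h\in\R^{1\times\w}[\xi]\mid h\,R_2R_1^{-1}\ \text{is strictly proper}\}$. Writing $\mathfrak{X}_i:=\{f\in\R^{1\times\w}[\xi]\mid fR_i^{-1}\ \text{strictly proper}\}$, which has dimension $n_i$ and satisfies $\mathfrak{X}_2\subseteq\mathfrak{X}_1$ (proof of Lemma \ref{lemma:state}), the $\R$-linear map $\mathfrak{X}_1\to\mathfrak{X}_2$, $f\mapsto f\bmod R_2$, is surjective (it restricts to the identity on $\mathfrak{X}_2$), and its kernel $\{f\in\mathfrak{X}_1\mid fR_2^{-1}\in\R^{1\times\w}[\xi]\}$ is carried bijectively onto $\mathcal{W}$ by $f\mapsto fR_2^{-1}$, since $fR_1^{-1}=(fR_2^{-1})R_2R_1^{-1}$ and $R_2$ is nonsingular; the rank-nullity theorem then gives $n_1=n_2+\dim\mathcal{W}$. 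The second observation is that $\dim\mathcal{W}=\w$ as soon as $M_\infty$ is nonsingular: expanding $R_2R_1^{-1}=\sum_{m\ge1}M_m\xi^{-m}$ at infinity (so $M_1=M_\infty$) and writing $h=\sum_{i=0}^{d}h_i\xi^i$, strict properness of $hR_2R_1^{-1}$ amounts to the vanishing of the coefficient of $\xi^{k}$ for each $k\ge0$, that is, $\sum_{m\ge1}h_{k+m}M_m=0$ for all $k\ge0$; reading these relations for $k=d-1,d-2,\dots,0$ in succession and cancelling the invertible $M_1$ forces $h_i=0$ for all $i\ge1$, whereas every constant row lies in $\mathcal{W}$, so $\mathcal{W}=\R^{1\times\w}$. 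Combining the two observations gives $n_1-n_2=\w$.

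The crux is thus to prove that $M_\infty$ has positive definite symmetric part, equivalently that $v^\top M_\infty v>0$ for every $v\in\R^\w\setminus\{0\}$. Fixing such a $v$, consider the \emph{scalar} rational function $f_v(\xi):=v^\top R_2(\xi)R_1(\xi)^{-1}v$: it is strictly proper (a scalar combination of strictly proper entries), analytic in $\overline{\C_+}$ (as $R_1$ is Hurwitz), and for every $\omega\in\R$ satisfies $f_v(-j\omega)+f_v(j\omega)=v^\top\bigl(R_2R_1^{-1}(-j\omega)^\top+R_2R_1^{-1}(j\omega)\bigr)v>0$ by strict positive-realness of $R_2R_1^{-1}$; hence $f_v$ is a strictly proper scalar positive-real function. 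Being positive-real, $f_v$ maps $\C_+$ into $\overline{\C_+}$, so $f_v(r)\ge0$ for every real $r>0$ and therefore $v^\top M_\infty v=\lim_{r\to\infty}r\,f_v(r)\ge0$; and being positive-real, $f_v$ has at most a simple pole and at most a simple zero at infinity, so its denominator degree exceeds its numerator degree by at most one, which by strict properness must then be exactly one, forcing $\lim_{r\to\infty}r\,f_v(r)\ne0$. Hence $v^\top M_\infty v>0$, and as $v$ was arbitrary, $M_\infty$ has positive definite symmetric part. This last paragraph --- and within it the appeal to the classical degree restriction on positive-real rational functions --- is the only non-routine step; everything else is bookkeeping with state spaces and Laurent expansions.
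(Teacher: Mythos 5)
Your proof is correct, but it follows a genuinely different route from the one in the paper. The paper first passes to a column-reduced $R_1^\prime=R_1U$, takes the monomial basis $e_i\xi^k$ for $\mathfrak{X}_1^\prime$, and then uses the representation $R_2=K^\top(\Psi_{12}^\top X_2+\Psi_{22}X_1^\prime)$ from Lemma \ref{lemma_R2} together with $\widetilde{\Psi}_1>0$ (so $\Psi_{22}^{\prime\prime\prime}>0$) to conclude that $R_2^\prime$ is itself column reduced with column degrees $\delta_i^1-1$; summing the column degrees gives $n_2=n_1-\w$. You instead (i) identify $n_1-n_2$ with $\dim\mathcal{W}$, $\mathcal{W}=\{h\mid hR_2R_1^{-1}\mbox{ strictly proper}\}$, via rank--nullity applied to the surjection $\mathfrak{X}_1\rightarrow\mathfrak{X}_2$, $f\mapsto f\;\mbox{mod}\;R_2$, whose kernel is carried onto $\mathcal{W}$ by $f\mapsto fR_2^{-1}$; (ii) reduce $\dim\mathcal{W}=\w$ to nonsingularity of the first Markov parameter $M_\infty=\lim_{\xi\rightarrow\infty}\xi R_2(\xi)R_1(\xi)^{-1}$ by a Laurent-coefficient recursion; and (iii) obtain $v^\top M_\infty v>0$ from the classical facts that the scalar function $v^\top R_2R_1^{-1}v$ is positive real (nonnegative on the positive real axis) and that a positive-real rational function has relative degree at most one. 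Both arguments ultimately rest on strict positive-realness, but they consume it differently: the paper goes through the storage function and the column-degree bookkeeping, which keeps everything inside its polynomial-matrix/QDF calculus and produces the objects ($K^\prime$, $\Psi_{22}$) reused in the proofs of Theorems \ref{th_01} and \ref{th_comp}; your argument bypasses Lemma \ref{lemma_R2} and column reduction entirely, is shorter, and yields the stronger conclusion $M_\infty+M_\infty^\top>0$ (note $M_\infty=K^\top\Psi_{22}K$, so this is consistent with, and in fact re-derives, full column rank of $K$). Two steps you state without proof --- that analyticity in $\overline{\C_+}$, strict properness and $\mathrm{Re}\,f_v>0$ on $j\R$ imply positive-realness (a maximum-principle argument), and the degree restriction on positive-real functions --- are standard and acceptable as citations, at the same level of rigour as the paper's own appeals to spectral factorisation.
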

\begin{IEEEproof}[\bf Proof of Lemma \ref{lemma:n1-n2}]
We prove the first part of the claim, well-known in the scalar case, but for whose multivariable version we have failed to find a proof in the literature. 

Let $U\in\R^{\w\times\w}[\xi]$ be a unimodular matrix such that $R_1^\prime:=R_1U$ is column reduced (see sect. 6.3.2 of \cite{kailath}); define $R_2^\prime:=R_2U$. Observe that $R_2^\prime R_1^{\prime -1}=R_2R_1^{-1}$; moreover $n_1=\deg(\det(R_1^\prime))=\deg(\det(R_1))$ and $n_2=\deg(\det(R_2))=\deg(\det(R_2^\prime))$. Thus w.l.o.g. we  prove the claim for $R_2^\prime R_1^{\prime -1}$. 

Define $\mathfrak{X}_{1}^\prime:=\{ f\in\R^{1\times\w}[\xi] \mid f R_1^{\prime -1} \mbox{is strictly proper}\}$ and  similarly $\mathfrak{X}_{2}^\prime$; it is straightforward to see that 
$\mathfrak{X}_{i}^\prime$ equals $\mathfrak{X}_{i}$ defined as in Lemma \ref{lemma:state}, $i=1,2$. Denote the degree of the $i$-th column of  $R_1^\prime$ by $\delta_i^1$ and that of the $i$-th column of  $R_2^\prime$ by $\delta_i^2$, $i=1,\ldots,\w$; strict properness yields $\delta_i^1>\delta_i^2$, $i=1,\ldots,\w$. A basis for $\mathfrak{X}_{1}^\prime$ is $e_i \xi^{k}$, $k=1,\ldots,\delta_k^1-1$, $i=1,\ldots, \w$, where $e_i$ is the $i$-th vector of the canonical basis for $\R^{1\times \w}$. A straightforward argument proves that  these vectors can be arranged in a matrix $X(\xi)=\col(X_2(\xi),X_1^\prime(\xi))$ so that the $n_2$ rows of $X_2$ span  $\mathfrak{X}_{2}^\prime$ and those of $X_1^\prime$ span its complement in $\mathfrak{X}_{1}^\prime$.  Permute the rows of $X_1^\prime$ so that $e_i\xi^{\delta_i^1-1}$, $i=1,\ldots,\tt w$, are its last $\tt w$ rows. 

An analogous of (\ref{poly-ss}) holds for $R_1^\prime$; given the arrangement of the basis vectors for $\mathfrak{X}_1^\prime$, it is straightforward to verify that the last $\tt w$ rows of $K$ contain the inverse of the highest column coefficient matrix of $R_1$, while its first $n_1-n_2-\tt w$ rows are equal to zero, i.e. $K^\top=\begin{bmatrix} 0_{(n_1-n_2-\w)\times \w} &{K^\prime}^\top \end{bmatrix}$, with $K^{\prime}\in\R^{\tt w \times w}$ nonsingular. 

Now let $\Psi_1^\prime$ be a storage function for $R_2^\prime R_1^{\prime -1}$ with the same properties as $\Psi_1$ in the statement of Th.  \ref{th_01}; we denote with $\Psi_{ij}^\prime$, $i,j=1,2$ the block submatrices arising from a partition of its coefficient matrix $\widetilde{\Psi^\prime}_1$ as in Lemma \ref{lemma_R2}. Use the formula for $R_2^\prime(\xi)$ established in Lemma \ref{lemma_R2} to conclude that  $R_2^\prime(\xi)=K'^\top \Psi_{12}^{\prime\top} X_2(\xi) + K^{\prime\top}  \begin{bmatrix} \Psi_{22}^{\prime\prime} & \Psi_{22}^{\prime \prime\prime}\end{bmatrix}X_1^\prime (\xi)$, 
where $\Psi_{12}^{\prime\top}\in\R^{{\tt w} \times n_2}$, $\begin{bmatrix} \Psi_{22}^{\prime\prime} & \Psi_{22}^{\prime \prime\prime}\end{bmatrix}\in\R^{\w\times (n_1-n_2)}$, and $\Psi_{22}^{\prime \prime\prime}$ has $\w$ columns. $\widetilde{\Psi^\prime}_1>0$ implies $\Psi_{22}^{\prime\prime\prime}>0$; thus the highest column coefficient matrix of $R_2(\xi)$ is $K^\prime \Psi_{22}^{\prime\prime\prime}$ and it is nonsingular. Thus also $R_2^\prime(\xi)$ is column reduced; moreover, its column degrees are $\delta_i^1 -1$, $i=1,\ldots,\w$. From this it follows that $\deg\det(R_2^\prime)=\sum_{i=1}^\w (\delta_i^1-1)=(\sum_{i=1}^\w \delta_i^1)-\w=n_1-\w$. The claim is proved.
\end{IEEEproof}

We resume the proof of Th. \ref{th_01}. From the formula for $R_2(\xi)$ proved in  Lemma \ref{lemma_R2} it follows that   
\begin{equation}\label{modR2}
0=R_2(\xi)\,\mbox{mod}\, R_2 = K^\top\left(\Psi_{12}^\top X_2(\xi) + \Psi_{22} X_1^\prime(\xi) \right)\mbox{mod}\, R_2=K^\top\left(\Psi_{12}^\top  + \Psi_{22}\Pi \right) X_2(\xi)\;.
\end{equation}
The rows of $X_2(\xi)$ are linearly independent over $\mathbb{R}$, since $X_2$ is a minimal state map. Consequently (\ref{modR2}) implies $K^\top(\Psi_{12}^\top + \Psi_{22}\Pi)=0$, and since $K$ is nonsingular by Lemma \ref{lemma:n1-n2}, we conclude that $\Psi_{12}^\top + \Psi_{22}\Pi=0$. Thus the coefficient matrix of $\Psi_1$ is structured as in Lemma \ref{lemma:fundstructure}. 

We now show that this structure implies that condition {\bf C3} holds. 
Consider first a switch from $\mathfrak{B}_1$ to $\mathfrak{B}_2$ at $t_k$. Taking the standard gluing conditions into account, $Q_{\Psi_1}(w)(t_k^-)\geq Q_{\Psi_{2}}(w)(t_k^+)$ if and only if
\begin{eqnarray}
&& \begin{bmatrix} X_2(\frac{d}{dt})w(t_k^-) \\ X_1^\prime (\frac{d}{dt})w(t_k^-) \end{bmatrix}^\top \widetilde{\Psi}_1\begin{bmatrix} X_2(\frac{d}{dt})w(t_k^-) \\ X_1^\prime (\frac{d}{dt})w(t_k^-) \end{bmatrix} - \begin{bmatrix} X_2(\frac{d}{dt})w(t_k^+) \\  \Pi X_2(\frac{d}{dt})w(t_k^+) \end{bmatrix}^\top \widetilde{\Psi}_1 \begin{bmatrix} X_2(\frac{d}{dt})w(t_k^+) \\  \Pi X_2(\frac{d}{dt})w(t_k^+) \end{bmatrix} \nonumber \\
&&=\begin{bmatrix} X_2(\frac{d}{dt})w(t_k^-) \\ X_1^\prime (\frac{d}{dt})w(t_k^-) \end{bmatrix}^\top \left(  \widetilde{\Psi}_1 - \begin{bmatrix} I_{n_2} & \Pi^\top \\ 0& 0  \end{bmatrix} \widetilde{\Psi}_1 \begin{bmatrix} I_{n_2} & 0 \\ \Pi & 0  \end{bmatrix} \right) \begin{bmatrix} X_2(\frac{d}{dt})w(t_k^-) \\ X_1^\prime (\frac{d}{dt})w(t_k^-) \end{bmatrix} \ge 0\; .
\label{ineq1}
\end{eqnarray}
Since the matrix between brackets is semidefinite positive (see Lemma \ref{lemma:fundstructure}), (\ref{ineq1}) is  satisfied.

It is straightforward to check that in a switch from $\mathfrak{B}_2$ to $\mathfrak{B}_1$ the value of the multi-functional is the same before and after the switch. The theorem is proved.
\end{IEEEproof}


\begin{IEEEproof}[\bf Proof of Th. \ref{th_comp}] W.l.o.g.  assume that $Q_\Psi$ is $R_1$-canonical; then by Lemma \ref{lemma:state}, given a minimal state map $X_1\derb$ for $\mathfrak{B}_1$ as in (\ref{stateB1}) there exists $\widetilde{\Psi}=\widetilde{\Psi}^\top\in\R^{n_1\times n_1}$ such that $\Psi(\zeta,\eta)=X_1(\zeta)^\top \widetilde{\Psi} X_1(\eta)$. Partition $\widetilde{\Psi}$ as $\widetilde{\Psi}=:\begin{bmatrix} \Psi_{11} & \Psi_{12} \\ \Psi_{12}^\top & \Psi_{22} \end{bmatrix}$ where $\Psi_{11}\in\R^{n_2 \times n_2}$, $\Psi_{12}\in\R^{n_2 \times (n_1-n_2)}$ and $\Psi_{22}\in\R^{(n_1-n_2) \times (n_1-n_2)}$. At a switch from $\mathfrak{B}_1$ to $\mathfrak{B}_2$ at $t_k$ the inequality (\ref{ineq1}) holds in particular for a switching signal $s(t)=1$ for $t\leq t_k$, $s(t)=2$ for $t>t_k$. Since for every choice of $v\in\R^{n_1}$ there exists a trajectory $w\in\B_1|_{(-\infty,0]}$ s.t. $\left( X_1\derb w \right)(0^-)=v$, using Lemma 
\ref{lemma:fundstructure} we conclude that (\ref{ineq1}) holds, then $\Psi_{12}^\top+\Psi_{22}\Pi=0$. Consequently, 
\begin{equation}\label{eq:jump}
\widetilde{\Psi}=\begin{bmatrix} \Psi_{11} & -\Pi \Psi_{22} \\ -\Psi_{22}\Pi & \Psi_{22} \end{bmatrix}= \begin{bmatrix} \widetilde{\Psi}^\prime & 0 \\ 0 & 0 \end{bmatrix} +
 \begin{bmatrix} \Pi^\top  \\ -I_{n_1-n_2} \end{bmatrix}  \Psi_{22}  \begin{bmatrix} \Pi & -I_{n_1-n_2} \end{bmatrix} \;,
\end{equation}
where $\widetilde{\Psi}^\prime:=\Psi_{11}-\Pi^\top \Psi_{22} \Pi$. Pre- and post-multiply (\ref{eq:jump}) by $X_1(\zeta)^\top$ and $X_1(\eta)$ to obtain
\begin{equation}\label{LyapB2}
\Psi(\zeta,\eta)=\underset{=:\Psi'(\zeta,\eta)}{\underbrace{X_2(\zeta)^\top \tilde{\Psi }' X_2(\eta)}} +  X_1(\zeta)^\top \begin{bmatrix} \Pi^\top  \\ -I_{(n_1-n_2)} \end{bmatrix}  \Psi_{22}  \begin{bmatrix} \Pi & -I_{(n_1-n_2)} \end{bmatrix} X_1(\eta)\;.
\end{equation}
Since $\Psi_1$ is a Lyapunov function for $\ker~R_1\derb$, there exists $V\in\R^{\w\times\w}[\xi]$ such that $(\zeta+\eta)\Psi_1(\zeta,\eta)=-Q(\zeta)^\top Q(\eta)+V(\zeta)^\top R_1(\eta)+R_1(\zeta)^\top V(\eta)$. We now show that there exists $M\in\R^{\w\times\w}[\xi]$ such that  $V=MR_2$. 

From Prop. 4.3 of \cite{PLE} it follows that $V(\xi)=\lim_{\mu\rightarrow \infty} \mu R_1(\mu)^{-\top} \Psi_1(\mu,\xi)$; substituting  (\ref{LyapB2}) in this expression we obtain
\begin{eqnarray*}\label{eq:faster}
V(\xi)&=&\lim_{\mu\rightarrow \infty} \Big( \mu R_1(\mu)^{-\top} X_2(\mu)^\top \tilde{\Psi }' X_2(\eta)\nonumber\\&& +\mu R_1(\mu)^{-\top}   X_1(\mu)^\top \begin{bmatrix} \Pi^\top  \\ -I_{(n_1-n_2)} \end{bmatrix}  \Psi_{22}  \begin{bmatrix} \Pi & -I_{(n_1-n_2)} \end{bmatrix} X_1(\eta) \Big) \; .
\end{eqnarray*}
Since $R_2R_1^{-1}$ is strictly proper, the first term goes to zero. Now $\begin{bmatrix}\Pi&-I_{n_1-n_2} \end{bmatrix} X_1(\xi)=-X_1^\prime(\xi)+\Pi X_2(\xi)$ and consequently 
\begin{eqnarray*}
V(\xi)=&&-\mu R_1(\mu)^{-\top} X_1^{\prime\top}(\mu)\Psi_{22}\begin{bmatrix} \Pi & -I_{(n_1-n_2)} \end{bmatrix} X_1(\xi)\\&& + \lim_{\mu\rightarrow \infty} \underset{\rightarrow 0}{\underbrace{\mu R_1(\mu)^{-\top}   X_2(\mu)^\top}} \Pi^\top \Psi_{22}  \begin{bmatrix} \Pi & -I_{(n_1-n_2)} \end{bmatrix} X_1(\xi)\\&&=-\begin{bmatrix}0_{(n_1-n_2)\times \w}& K^{\prime\top}\end{bmatrix}\Psi_{22} \begin{bmatrix} \Pi & -I_{(n_1-n_2)} \end{bmatrix} X_1(\xi) \; ,
\end{eqnarray*} 
where $K^\prime\in\R^{\w\times\w}$ is a nonsingular matrix, as proved in Lemma \ref{lemma_R2} and \ref{lemma:n1-n2}. That $V$ has the right factor $R_2$ follows from the following argument. 
Observe that $\begin{bmatrix} \Pi & -I_{(n_1-n_2)} \end{bmatrix}  \begin{bmatrix} X_2(\xi) \\X_1^\prime (\xi) \end{bmatrix}= X_1^\prime(\xi)\,\mbox{mod}\,R_2- X_1^\prime(\xi)$. Write $X_1^\prime(\xi)R_2(\xi)^{-1}=P(\xi)+S(\xi)$, with $S(\xi)$ a strictly proper polynomial matrix and $P\in\R^{(n_1-n_2)\times \tt w}[\xi]$; then $\Pi X_2(\xi) -X_1^\prime (\xi)= X_1^\prime(\xi)-P(\xi)R_2(\xi)- X_1^\prime(\xi) 
=-P(\xi)R_2(\xi)$. 
This proves that $V(\xi)=\begin{bmatrix}0_{(n_1-n_2)\times \w}& K^{\prime\top}\end{bmatrix}\Psi_{22}P(\xi)R_2(\xi)=:M(\xi)R_2(\xi)$.

The equality  $(\zeta+\eta)\Psi_1(\zeta,\eta)=-Q(\zeta)^\top Q(\eta)+R_2(\zeta)^\top M(\zeta)^\top R_1(\eta)+R_1(\zeta)^\top M(\eta) R_2(\eta)$, together with $\rank~Q(j\omega)=\w$ for all $\omega\in\R$ and  $R_1$ being  Hurwitz, prove strict positive-realness of $MR_2R_1^{-1}$. That $MR_2R_1^{-1}$ is  strictly proper follows from $QR_1^{-1}$ being strictly proper and Th. \ref{th:LyapfromQDF}. This concludes the proof. \bigskip
\end{IEEEproof}

\bibliographystyle{plain}

\bibliography{biblio}

\end{document}